                     \def\version{17th August, 2013}                %

\documentclass[reqno,11pt]{amsart} 
\usepackage{srcltx}

\usepackage{amsmath, amsthm, a4, latexsym, amssymb}
\usepackage{color}
\usepackage[colorlinks=true,linkcolor=blue,citecolor=blue]{hyperref}

\setlength{\textheight}{8.4in}
\setlength{\textwidth}{6.6in}
\setlength{\topmargin}{0in}
\setlength{\headheight}{0.12in}
\setlength{\headsep}{.40in}
\setlength{\parindent}{1pc}
\setlength{\oddsidemargin}{-0.1in}
\setlength{\evensidemargin}{-0.1in}

\marginparwidth 40pt
\marginparsep 0pt
\oddsidemargin-5mm
\topmargin -30pt
\headheight 12pt
\headsep 15pt
\footskip 15pt
\textheight 670pt
\textwidth 170mm
\columnsep 10pt
\columnseprule 0pt
 \sloppy 
 \parskip 0.8ex plus0.3ex minus0.2ex 
 \parindent1.0em 
 
\def\@rmrk#1#2{\refstepcounter
    {#1}\@ifnextchar[{\@yrmrk{#1}{#2}}{\@xrmrk{#1}{#2}}}


%
 
\makeatletter\@addtoreset{equation}{section}\makeatother

 \sloppy
 \parskip 0.8ex plus0.3ex minus0.2ex
 \parindent0.0em

 \newfont{\bfit}{cmbxti10 scaled 1200}

\renewcommand{\d}{{\rm d}}
 \newcommand{\e}{{\rm e} }

 \newcommand{\eps}{\varepsilon}
 \newcommand{\supp}{{\rm supp}}

 \newcommand{\eff}{{\text{\rm eff}}}
 \newcommand{\R}{\mathbb{R}}
 \newcommand{\N}{\mathbb{N}}
 \newcommand{\Z}{\mathbb{Z}}

\DeclareMathOperator{\Id}{Id}

 \newcommand{\E}{\mathbb{E}}
 
 \renewcommand{\P}{\mathbb{P}}
 \def\1{{\mathchoice {1\mskip-4mu\mathrm l} 
{1\mskip-4mu\mathrm l}
{1\mskip-4.5mu\mathrm l} {1\mskip-5mu\mathrm l}}}
 
 \newcommand{\Acal}{{\mathcal A}}
 
 \newcommand{\Ccal}{{\mathcal C}}

 \newcommand{\Fcal}{{\mathcal F}}

 \newcommand{\Mcal}{{\mathcal M}}
 \newcommand{\Ncal}{{\mathcal N}}
 \newcommand{\Pcal}{{\mathcal P}}
 \newcommand{\Ocal}{{\mathcal O}}
 
 \newcommand{\smfrac}[2]{{\textstyle{\frac {#1}{#2}}}}

\newcommand{\sfrac}[2]{\mbox{$\frac{#1}{#2}$}}
\newcommand{\ssup}[1] {{\scriptscriptstyle{({#1}})}}


\renewcommand{\subsection}{\secdef \subsct\sbsect}
\newcommand{\subsct}[2][default]{\refstepcounter{subsection}
\vspace{0.15cm}
{\flushleft\bf \arabic{section}.\arabic{subsection}~\bf #1  }
\nopagebreak\nopagebreak}
\newcommand{\sbsect}[1]{\vspace{0.1cm}\noindent
{\bf #1}\vspace{0.1cm}}

{\nopagebreak {\hfill\rule{2mm}{2mm}}\\ }

\newtheorem{theorem}{Theorem}[section]
\newtheorem{lemma}[theorem]{Lemma}
\newtheorem{cor}[theorem]{Corollary}
\newtheorem{prop}[theorem]{Proposition}
\newtheorem{assumption}[theorem]{Assumption}

\newtheoremstyle{thm}{1.5ex}{1.5ex}{\itshape\rmfamily}{}
{\bfseries\rmfamily}{}{2ex}{}

\newtheoremstyle{rem}{1.3ex}{1.3ex}{\rmfamily}{}
{\itshape\rmfamily}{}{1.5ex}{}
\theoremstyle{rem}
\newtheorem{remark}{{\slshape\sffamily Remark}}[]

\refstepcounter{subsubsection}

\def\thebibliography#1{\section*{References}
  \list%
  {\arabic{enumi}.}
    {\settowidth\labelwidth{[#1]}\leftmargin\labelwidth
    \advance\leftmargin\labelsep
    \parsep0pt\itemsep0pt
    \usecounter{enumi}}
    \def\newblock{\hskip .11em plus .33em minus .07em}
    \sloppy                   
    \sfcode`\.=1000\relax}



 \begin{document}
\title[LDP for the local times of a RWRC in a large box]
{\large Large deviations for the local times\\ \medskip of a random walk among random conductances\\ \medskip in a growing box}
\author[Wolfgang K\"onig and Tilman Wolff]{}
\maketitle
\thispagestyle{empty}
\vspace{-0.5cm}

\centerline{\sc By Wolfgang K\"onig and Tilman Wolff}
\renewcommand{\thefootnote}{}
\footnote{\textit{AMS Subject Classification:}  60K37, 60J65, 60J55, 60F10.}
\footnote{\textit{Keywords:} Random conductances, random walk, randomised Laplace operator, local times, large deviations, Donsker-Varadhan-G\"artner theory, spectral homogenisation, Lifshitz tails.}

\vspace{-0.5cm}
\centerline{\textit{Weierstrass Institute Berlin and TU Berlin}}
\vspace{0.2cm}

\begin{center}
\version
\end{center}

\begin{quote}{\small {\bf Abstract: } We derive an annealed large deviation principle (LDP) for the normalised and rescaled local times of a continuous-time random walk among random conductances (RWRC) in a time-dependent, growing box in $\Z^d$. We work in the interesting case that the conductances are positive, but may assume arbitrarily small values. Thus, the underlying picture of the principle is a joint strategy of small conductance values and large holding times of the walk. The speed and the rate function of our principle are explicit in terms of the lower tails of the conductance distribution as well as the time-dependent size of the box. 

An interesting phase transition occurs if the thickness parameter of the conductance tails exceeds a certain threshold: for thicker tails, the random walk spreads out over the entire growing box, for thinner tails it stays confined to some bounded region. In fact, in the first case, the rate function turns out to be equal to the $p$-th power of the $p$-norm of the gradient of the square root for some $p\in(\frac {2d}{d+2},2)$. This extends the Donsker-Varadhan-G\"artner rate function for the local times of Brownian motion (with deterministic environment) from $p=2$ to these values. 

As corollaries of our LDP, we derive the logarithmic asymptotics of the non-exit probability of the RWRC from the growing box, and the Lifshitz tails of the generator of the RWRC, the randomised Laplace operator.

To contrast with the annealed, not uniformly elliptic case, we also provide an LDP in the quenched setting for conductances that are bounded and bounded away from zero. The main tool here is a spectral homogenisation result, based on a quenched invariance principle for the RWRC.}
\end{quote}


\section{Introduction and main results}\label{sec-Intro}

\noindent Random motions in random media have attracted the attention of researchers for decades because of various reasons. On one hand, they exhibit various critical behaviours that strongly differ from the classical theory in non-random media, and are sometimes surprising and on the first view counter-intuitive. This makes this subject a fascinating enterprise, a source of inspiration and beautiful mathematics and an incitation for finding new ideas and arguments. On the other hand, the introduction of randomness in the  medium makes applications in many fields much more realistic and the model therefore much more valuable. For example, random impurities in glasses, random retardations of electrical currents and much more are most efficiently modeled with the background of a random medium.

In this paper, we consider a special case of what is often called {\it random walk in random environment}; in fact it is one of its most-studied continuous-time analogues, the {\it random conductance model (RCM)}, where the randomness in the medium appears via weights on the bonds. This model was recently studied a lot (and continues to do so) with stress on the long-time behaviour of the diffusing particle in that medium, the {\it random walk among random conductances (RWRC)}. People were interested in deriving laws of large numbers, central limit theorems and invariance principles \cite{SS04, FM06, M08, BP07, BD10, A+12} in both the quenched and the annealed setting, under various assumptions on the distribution of the medium. Furthermore, heat kernel estimates \cite{BBHK08} and certain aspects of anomalous behaviour of the walk \cite{BB10} and connections with trapping models \cite{BC10} were studied. See \cite{B11} for a survey on recent progress on the random conductance model with special emphasis on 
homogenisation and martingale techniques.

However, our focus is not on the long-time behaviour in the vicinity of invariance principles in the entire space, but on the clumping behaviour in given boxes. More precisely, we derive a {\it large-deviation principle (LDP)} for the local times of a RWRC caught in boxes in the annealed setting, i.e., averaging over both randomnesses. This type of question stands in the tradition of the famous pioneering large-deviation results for the occupation times of random walks and Brownian motion from the 1970s \cite{DV75, G77}. Furthermore, there are close connections with the Lifshitz tails of the generator of the random walk in the boxes.

The present paper is a continuation of our recent study \cite{KSW11}, where we consider fixed boxes, not depending on time. In the present paper, we study large boxes that increase with time. Again, in contrast to the uniformly elliptic case, which is most often studied, we work under the assumption that the conductances are positive, but can attain arbitrarily small values, and we specify their lower tails. Then the speed of the LDP is a power of the time, and the rate function turns out to be the $p$-th power of the $p$-norm of the gradient of the square root for some $p\in(\frac {2d}{d+2},2)$. The boundary case $p=2$ is the case of the Donsker-Varadhan-G\"artner LDP mentioned above. This explicit form of the rate function makes the LDP rather appealing, and the question about the minimisers contains interesting analytical questions. This rate function is the continuous version of the rate function that we introduced in \cite{KSW11}.

Like in \cite{KSW11}, the annealed asymptotics are determined by a joint strategy of the medium and the walk, in that the conductances assume very small, time-dependent values in order to help the walk to realise large holding times in the growing box. Even more interestingly, it also turns out that there is an interesting sharp transition when the tails of the conductances at zero become thin enough: the optimal strategy consists now of an even much stronger clumping behaviour; in fact the walk confines to a fixed region that does not grow with time. In both cases, we are able to say something interesting about the non-exit probability of the walk from the growing box, and this leads, via a standard device, to the identification of the Lifshitz tails of the generator of the RWRC, the randomised Laplace operator.

One of our motivations for the present work was the desire to understand the {\it parabolic Anderson model (PAM)} with the underlying diffusion taken as a RWRC, a project that we plan to attack in future. The PAM describes a random mass flow through a random potential of sinks and sources and is determined by spectral theory of the Anderson Hamiltonian \cite{GK04,KW13}. In fact, both the generator of the PAM (the {\it Anderson Hamiltonian}) and the generator of the RWRC are important examples of random operators, and their spectral properties are of high interest. The interplay between these spectral properties and the long-time behaviour of the random walk generated makes these two models particularly interesting. As the PAM possesses self-attractive forces, the description of its behaviour heavily draws on the understanding of the clumping behaviour in given boxes, i.e., on the research brought out in the present paper. 

To contrast with the annealed setting where the conductances help the RWRC by assuming extremely small values, we also provide in Section~\ref{sec-quenchedLDP} a quenched (i.e., almost surely with respect to the conductances) LDP in growing boxes in the uniformly elliptic case, where the conductances are bounded away from zero. In this case, the conductances form a homogenised environment in which the RWRC satisfies a Donsker-type invariance principle, and the rescaled local times satisfy an LDP with rate function given by the Dirichlet energy of the limiting Brownian motion.

In the remainder of this first section, we give an introduction and formulate and comment our main results. The new contributions of this paper appear in Sections~\ref{sec-LDPSgrowing} (LDPs in large boxes), \ref{sec:intro-varprobs} (non-exit probabilities and a relevant variational problems), \ref{sec-Lifshitz} (Lifshitz tails) and \ref{sec-quenchedLDP} (a quenched LDP for uniformly elliptic conductances). Section~\ref{sec:intro-pam} explains the connection with the PAM, Section~\ref{sec:heur} gives heuristics, and in Section~\ref{sec-OpenProblems} we list some interesting problems that are left open in this paper.

\subsection{Random Walk among random conductances}\label{sec:intro-rcm}

\noindent Consider the lattice $\Z^d$ with $d\geq 1$ and a family $a=(a_{xy})_{x,y\in\Z^d}$ of non-negative random variables
$a_{xy}$. We write $\Pr$ for the corresponding probability and $\langle\cdot\rangle$ for the expectation. We assume that, $\Pr$-almost surely, $a_{xy}=a_{yx}$ for all $x,y\in\Z^d$ and $a_{xy}=0$ unless $x\sim y$, that is, unless $x$ and $y$ are nearest neighbours in the lattice. Hence, we attach to any bond on the lattice a positive random weight, and the bonds are undirected. We also sometimes write $a_{x,y}$ instead of $a_{xy}$. This model is often referred to as the {\it random conductance model (RCM)}. The most important object throughout this work will be the associated discrete Laplacian
\begin{equation}\label{eqn:random-laplace-def}
\Delta^a = \nabla^\ast A(x)\nabla,\qquad\mbox{where }
\big(A(x)\big)_{ij}=\delta_{ij}a_{x,x+e_i},\qquad x\in\Z^d,i,j\in\{1,\ldots,d\},
\end{equation}
$e_i$ is the $i$-th unit vector (with $1$ in the $i$-th component and zero everywhere else) in the lattice and $\delta_{ij}$ is the Kronecker delta. On functions $f\colon\Z^d\to\R$, the random Laplacian acts like
\begin{equation}
\Delta^a f(x) = \sum_{y\in\Z^d\colon y\sim x}a_{xy}[f(y)-f(x)].
\end{equation}
For $e\in\Ncal=\{e_1,\ldots,e_d\}$, the set of unit vectors in the lattice, we introduce $a(x,e)$ as a shortcut for $a_{x,x+e}$. We assume that the conductances are independent and identically distributed, that is, 
\begin{equation}
\big(a(x,e)\big)_{x\in\Z^d,e\in\Ncal}
\end{equation}
is an i.i.d.~family of random variables.

The operator $\Delta^a$ is symmetric and generates the continuous-time random walk $(X_t)_{t\in[0,\infty)}$ in $\Z^d$, the {\it random walk among random conductances (RWRC)}. This process starts at $x\in\Z^d$ under $\P_x^a$ and evolves as follows. When located at $y$, it waits an exponential random time with parameter $\pi_y=\sum_{z\in\Z^d\colon z\sim y}a_{y,z}$, i.e., with expectation $1/\pi_y$, and then jumps to a neighbouring site $z'$ with probability $a_{y,z'}/\pi_y$. We write $\E_x^a$ for expectation w.r.t.\ $\P_x^a$.

\subsection{Large deviations for local times in boxes}\label{sec-LDPs}

\noindent The main object of our study is the family of {\it local times} of the walk,
\begin{equation}
\ell_t(z)=\int_{0}^t\delta_{X_s}(z)\,\d s,\qquad z\in\Z^d,t>0,
\end{equation}
which register the amount of time that the walker spends in $z$ by time $t$. More precisely, we are interested in {\it large-deviation principles (LDPs)} for $\frac 1t\ell_t$ as $t\to\infty$, conditional on not leaving a given bounded region $B\subset\Z^d$. For a given choice of the conductances $a$, one of the main statements in that direction was provided by Donsker and Varadhan \cite{DV75} and G\"artner \cite{G77}.

\begin{theorem}[Donsker-Varadhan-G\"artner LDP on a finite region]\label{thm:dvg-ldp}
Fix a bounded set $B\in\Z^d$ containing 0 and a con\-duc\-tance configuration $a= (a_{xy})_{x,y\in\Z^d}$. Then, under the measures $\P_0^a(\,\cdot\,\vert\supp(\ell_t)\subset B)$, the normalised local times $\frac 1t\ell_t$ satisfy a large deviation principle on the space 
$$
\Mcal=\{g^2\colon g\in\ell^2(\Z^d),\supp (g)\subset B,\Vert g\Vert_2=1\}
$$ 
of probability measures on $B$ with scale $t$ and rate function $I^{\ssup{\rm d}}_{a,0}=I^{\ssup{\rm d}}_a-\inf_\Mcal I^{\ssup{\rm d}}_a$, where
\begin{equation}\label{eqn:dvg-ldp}
I^{\ssup{\rm d}}_a(g^2) =\sum_{e\in\Ncal}\sum_{z\in\Z^d}a_{z,z+e}\big[g(z+e)-g(z)\big]^2,\qquad g^2\in\Mcal.
\end{equation}
\end{theorem}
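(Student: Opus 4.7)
This is the classical Donsker--Varadhan--G\"artner LDP in the case of a finite state space $B$, and I would prove it via the Feynman--Kac formula combined with the G\"artner--Ellis theorem. Write $\Delta^a_B$ for the restriction of $\Delta^a$ to $\ell^2(B)$ with Dirichlet boundary conditions (equivalently, the generator of the walk killed upon exiting $B$). For any $V\colon B\to\R$, the Feynman--Kac formula gives
\begin{equation*}
Z_t(V):=\E_0^a\bigl[\exp\langle V,\ell_t\rangle;\,\supp(\ell_t)\subset B\bigr]=\bigl(\exp(t(\Delta^a_B+V))\1\bigr)(0).
\end{equation*}
Since $\Delta^a_B+V$ is symmetric (by $a_{xy}=a_{yx}$) and acts on the finite-dimensional space $\ell^2(B)$, a Perron--Frobenius argument on the walk-component of $B$ containing $0$ yields $\tfrac{1}{t}\log Z_t(V)\to\lambda_1(\Delta^a_B+V)$, the top eigenvalue. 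Integration by parts together with Rayleigh--Ritz then gives
\begin{equation*}
\lambda_1(\Delta^a_B+V)=\sup_{g\in\ell^2(\Z^d),\,\supp g\subset B,\,\|g\|_2=1}\bigl[\langle V,g^2\rangle-I^{\ssup{\rm d}}_a(g^2)\bigr],
\end{equation*}
which identifies $V\mapsto\lambda_1(\Delta^a_B+V)$ as the Legendre transform on the finite-dimensional simplex $\Mcal$ of the Dirichlet-form functional $g^2\mapsto I^{\ssup{\rm d}}_a(g^2)$.

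Next, because $\Mcal$ sits in a finite-dimensional space and $V\mapsto\lambda_1(\Delta^a_B+V)$ is everywhere finite, convex and differentiable (the top eigenvalue being simple on the relevant component), the G\"artner--Ellis theorem delivers a full LDP for $\ell_t/t$ under the sub-probability measure $\P_0^a[\,\cdot\,;\,\supp(\ell_t)\subset B]$ with rate function equal to the double Legendre transform $\mu\mapsto\sup_V[\langle V,\mu\rangle-\lambda_1(\Delta^a_B+V)]$. Convexity and lower semicontinuity of $g^2\mapsto I^{\ssup{\rm d}}_a(g^2)$ on $\Mcal$ make this double transform collapse back to $I^{\ssup{\rm d}}_a$ itself. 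Applying the same logic at $V\equiv0$, the non-exit probability satisfies
\begin{equation*}
\P_0^a(\supp(\ell_t)\subset B)=Z_t(0)=\exp\bigl(t\lambda_1(\Delta^a_B)+o(t)\bigr)=\exp\bigl(-t\inf_\Mcal I^{\ssup{\rm d}}_a+o(t)\bigr),
\end{equation*}
so dividing by this survival probability in Bayes' formula shifts the rate function by $\inf_\Mcal I^{\ssup{\rm d}}_a$ and yields the normalised conditional rate $I^{\ssup{\rm d}}_{a,0}$ as stated.

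The most delicate step will be the Legendre inversion when the candidate density $\mu=g^2$ has zeros in $B$: the formal dual optimiser $V^\ast(x)=\lambda_1(\Delta^a_B+V^\ast)-(\Delta^a_B g)(x)/g(x)$ is singular at such points, so the clean identification $\sup_V[\langle V,g^2\rangle-\lambda_1(\Delta^a_B+V)]=I^{\ssup{\rm d}}_a(g^2)$ only works directly for strictly positive $g$. I would handle this by replacing $g$ by $g+\eps$, applying the clean duality there, and then passing to the limit $\eps\downarrow 0$ using lower semicontinuity of the Dirichlet form. The finiteness of $B$ eliminates the topological and projective-limit subtleties of the classical infinite-volume Donsker--Varadhan machinery, so apart from this one technical issue the proof reduces to finite-dimensional spectral theory and the standard G\"artner--Ellis template.
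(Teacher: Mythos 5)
The paper does not prove this statement: Theorem~\ref{thm:dvg-ldp} is quoted as the classical Donsker--Varadhan (1975) and G\"artner (1977) result, with no proof supplied. Your proposed proof is a correct and, for a finite state space, essentially optimal route. The chain you set up is sound: Feynman--Kac reduces $\log\E_0^a[\exp\langle V,\ell_t\rangle;\supp(\ell_t)\subset B]$ to the top eigenvalue of the Dirichlet operator $\Delta^a_B+V$; summation by parts (using $a_{xy}=a_{yx}$) and Rayleigh--Ritz identify $\lambda_1(\Delta^a_B+V)=\sup_{g^2\in\Mcal}[\langle V,g^2\rangle-I^{\ssup{\rm d}}_a(g^2)]$; G\"artner--Ellis then gives the LDP with rate function $\Lambda^*$, which collapses back to $I^{\ssup{\rm d}}_a$ by Fenchel--Moreau because the extended functional $\mu\mapsto I^{\ssup{\rm d}}_a(\mu)$ (equal to $+\infty$ off $\Mcal$) is lower semicontinuous and convex --- convexity follows since $(s,u)\mapsto(\sqrt s-\sqrt u)^2$ is jointly convex on $[0,\infty)^2$. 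The smoothness of $V\mapsto\lambda_1$, required by G\"artner--Ellis, holds by simplicity of the Perron--Frobenius eigenvalue on the component reachable from $0$, as you note. Your flagged subtlety --- singularity of the dual optimiser $V^\ast$ at zeros of $g$ --- is real, and the $g\mapsto g+\eps$ regularisation plus re-normalisation handles it, using lower semicontinuity of the Dirichlet form. Dividing by the survival probability $Z_t(0)\approx\exp(-t\inf_\Mcal I^{\ssup{\rm d}}_a)$ correctly shifts the rate to $I^{\ssup{\rm d}}_{a,0}$. By contrast, the original references argue by change-of-measure, sub-additivity, and projective limits so as to cover general Markov processes on Polish spaces; that machinery is unnecessary here, and the G\"artner--Ellis route you take is the standard modern shortcut once the state space is finite. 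One side remark outside the scope of your proof: if some conductances vanish or $B$ is not $a$-connected, the rate function~\eqref{eqn:dvg-ldp} should be interpreted as $+\infty$ off the component of $0$; this is a caveat about the theorem statement, which your Perron--Frobenius restriction to the component of $0$ already accommodates.
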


Here, $\Vert\cdot\Vert_2$ denotes the norm in $\ell^2(\Z^d)$, and the superscript d highlights that $B$ is a  \emph{discrete} space. Note that the terms in the sum on the right-hand side of \eqref{eqn:dvg-ldp} are non-zero only if either $z\in B$ or $z+e\in B$, that is, we are looking at a finite sum. More verbosely, the LDP says that the level sets $\{g^2\in\Mcal\colon I^{\ssup{\rm d}}_a(g^2)\leq s\}$ for $s\geq 0$ are compact, and that
\begin{eqnarray}\label{eqn:dvg-ldp-upperlower}
\liminf_{t\to\infty}\frac 1t\log\P^a_0(\ell_t\in \Ocal,\supp(\ell_t)\subset B)&\geq&-\inf_{g^2\in \Ocal}I^{\ssup{\rm d}}_a(g^2),\qquad\text{for }\Ocal\subset\Mcal\text{ open,}\\
\limsup_{t\to\infty}\frac 1t\log\P^a_0(\ell_t\in \Ccal,\supp(\ell_t)\subset B)&\leq&-\inf_{g^2\in \Ccal}I^{\ssup{\rm d}}_a(g^2),\qquad\text{for }\Ccal\subset\Mcal\text{ closed.}
\end{eqnarray}
Theorem~\ref{thm:dvg-ldp} is a {\it quenched} result, as the conductances are kept fixed. There is no interesting effect coming from the randomness of the conductances, as the number of involved random variables is finite and fixed. 

For the {\it annealed} regime, i.e., when also averaging over the conductances, there is an interesting question that arises. Under what assumptions on the environment is the annealed behaviour on a different scale than the quenched one?  Is it possible that the conductances \lq help\rq\ the walker to spend much time in $B$ by attaining very small $t$-dependent values, which slow down the movement and increase the holding times? Consequently, there would be an interplay, a compromise, between the medium and the motion. This happens in the case where the conductances are positive, but can assume arbitrarily small values. More precisely, we make the following assumption on the lower tails of the conductances.

\begin{assumption}\label{assumption:tails}
For any $x\sim y\in\Z^d$,
\begin{equation}
\Pr(a_{x,y}>0)=1\qquad\mbox{and}\qquad{\rm essinf} (a_{x,y})=0.
\end{equation}
Moreover, there exist positive parameters $\eta$ and $D$ such that, for any $x\sim y\in\Z^d$,
\begin{equation}\label{eqn:tails-behaviour}
\log \Pr (a_{x,y}\leq \varepsilon)\sim -D\eps^{-\eta}\quad\text{ as }\eps\searrow 0.
\end{equation}
\end{assumption}

The parameter $\eta$ measures the thickness of the tails at zero; the two extreme cases correspond to conductances bounded away from zero ($\eta=\infty)$ and conductances that might be zero as well ($\eta=0$). Under Assumption~\ref{assumption:tails}, the annealed asymptotic behavior of the normalised local times is indeed on a smaller scale than $t$. In our recent paper \cite{KSW11}, we obtained the following result.

\begin{theorem}[Annealed LDP, finite region]\label{thm:ldp-finite}
Suppose that Assumption~\ref{assumption:tails} holds. Then, under the annealed measures $\langle\P_0^a(\,\cdot\,\vert\supp(\ell_t)\subset B)\rangle$, the normalised local times $\frac 1t\ell_t$ satisfy a large deviation principle on the space $\Mcal$ with scale $t^{\frac{\eta}{\eta+1}}$ and rate function $J^{\ssup{\rm d}}_0=J^{\ssup{\rm d}}-\inf_\Mcal J^{\ssup{\rm d}}$, where
\begin{equation}
J^{\ssup{\rm d}}(g^2) =K_{\eta,D}\sum_{e\in\Ncal}\sum_{z\in\Z^d}\big\vert g(z+e)-g(z)\big\vert^{\frac{2\eta}{1+\eta}},\qquad g^2\in\Mcal.
\end{equation}
Here, $K_{\eta,D}=\big(1+1/\eta\big)(D\eta)^{1/(1+\eta)}$.
\end{theorem}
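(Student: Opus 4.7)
The plan is an annealed analysis that combines the quenched Donsker--Varadhan--G\"artner LDP of Theorem~\ref{thm:dvg-ldp} with a Laplace-method averaging over the environment, exploiting the i.i.d.\ structure of the conductances and the lower-tail assumption \eqref{eqn:tails-behaviour}. The heuristic picture is that, for a target profile $g^2\in\Mcal$, Theorem~\ref{thm:dvg-ldp} gives $\P_0^a(\tfrac1t\ell_t\approx g^2,\supp(\ell_t)\subset B)\approx\exp(-tI^{\ssup{\rm d}}_a(g^2))$; subsequent averaging over $a$ then concentrates on atypically small, $t$-dependent conductance values that minimize the combined cost $tI^{\ssup{\rm d}}_a(g^2)+D\sum_{z,e}a_{z,z+e}^{-\eta}$, the second term being the environmental large-deviation cost read off from \eqref{eqn:tails-behaviour}.

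The pivotal estimate is the single-bond Laplace transform: for every $c\geq 0$,
\[
\log\langle\exp(-sc\,a_{xy})\rangle=-K_{\eta,D}(sc)^{\eta/(\eta+1)}(1+o(1)),\qquad s\to\infty.
\]
One minimizes $a\mapsto sca+Da^{-\eta}$ over $a>0$, finding $a^\ast=(D\eta/sc)^{1/(\eta+1)}$ with minimum value $K_{\eta,D}(sc)^{\eta/(\eta+1)}$. A dyadic partition of $(0,\infty)$ combined with \eqref{eqn:tails-behaviour} turns this heuristic into matching upper and lower bounds. The walk-cost $sca^\ast$ and the environment-cost $Da^{\ast-\eta}$ split in the ratio $\eta:1$, which drives the bookkeeping below.

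For the upper bound of the LDP, a closed set $\Ccal\subset\Mcal$ is covered by finitely many $\delta$-balls around representatives $g_i^2$. On each ball Theorem~\ref{thm:dvg-ldp} yields a quenched bound of order $\exp(-tI^{\ssup{\rm d}}_a(g_i^2)+o(t))$, and the expectation over $a$ factorizes over the finitely many bonds relevant to $B$ by the i.i.d.\ assumption. Applying the single-bond estimate to each factor gives
\[
\log\langle\exp(-tI^{\ssup{\rm d}}_a(g_i^2))\rangle=-t^{\eta/(\eta+1)}J^{\ssup{\rm d}}(g_i^2)(1+o(1)),
\]
and sending $\delta\searrow 0$ yields the annealed upper bound for the unconditional probability with rate function $J^{\ssup{\rm d}}$. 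I expect the main obstacle to be the non-uniformity of Theorem~\ref{thm:dvg-ldp} in $a$: one must truncate on the event $\{\min_{z,e}a_{z,z+e}\geq\kappa_t\}$ with a well-chosen scale $\kappa_t$ and show that the complementary contribution is negligible on the slower scale $t^{\eta/(\eta+1)}$, which requires care with the $a$-dependent prefactors hidden in the proof of Theorem~\ref{thm:dvg-ldp}.

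For the lower bound, fix $g^2$ in an open set $\Ocal$ and consider the environment event $E_\delta=\{a_{z,z+e}\in[(1-\delta)a^\ast_{z,e},(1+\delta)a^\ast_{z,e}]\text{ for all }z,e\}$ with $a^\ast_{z,e}=(D\eta/(tc_{z,e}))^{1/(\eta+1)}$ and $c_{z,e}=[g(z+e)-g(z)]^2$. The tail \eqref{eqn:tails-behaviour} gives
\[
\Pr(E_\delta)=\exp\bigl(-\tfrac{1}{\eta+1}K_{\eta,D}\,t^{\eta/(\eta+1)}\textstyle\sum_{z,e}c_{z,e}^{\eta/(\eta+1)}(1+O(\delta))\bigr),
\]
and on $E_\delta$ the quenched lower bound of Theorem~\ref{thm:dvg-ldp} near $g^2$ contributes a further factor
\[
\exp\bigl(-tI^{\ssup{\rm d}}_{a^\ast}(g^2)(1+o(1))\bigr)=\exp\bigl(-\tfrac{\eta}{\eta+1}K_{\eta,D}\,t^{\eta/(\eta+1)}\textstyle\sum_{z,e}c_{z,e}^{\eta/(\eta+1)}\bigr).
\]
The two parts combine to exactly $\exp(-t^{\eta/(\eta+1)}J^{\ssup{\rm d}}(g^2))$, matching the upper bound. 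Finally, dividing by the non-exit probability $\langle\P_0^a(\supp(\ell_t)\subset B)\rangle\asymp\exp(-t^{\eta/(\eta+1)}\inf_\Mcal J^{\ssup{\rm d}})$ produces the shifted rate function $J^{\ssup{\rm d}}_0$.
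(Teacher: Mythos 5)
The paper does not actually prove Theorem~\ref{thm:ldp-finite} --- it is quoted from \cite{KSW11}. Nevertheless, the proof in Sections~\ref{sec:ldp-growing-prelim}--\ref{sec:ldp-growing-proofs} of the analogous growing-box result (Theorem~\ref{thm:ldp-growing-good}) makes the paper's blueprint for this theorem quite visible, and your proposal can be compared with it.

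Your lower bound is essentially the paper's argument, and the arithmetic (optimisation of $a\mapsto sca+Da^{-\eta}$, the $\eta:1$ split between walk cost and environment cost, and the appearance of $K_{\eta,D}$) is correct. Two points that need attention: first, at bonds with $c_{z,e}=0$ your optimal $a^\ast_{z,e}$ is $+\infty$, so one must cap $a^\ast$ at a large constant $M$ and take $M\to\infty$ at the end, exactly as the paper does with the truncated profile $\varphi_M^{(f,\eps)}$; second, $a^\ast_{z,e}\sim t^{-1/(\eta+1)}$ shrinks with $t$, so Theorem~\ref{thm:dvg-ldp} does not apply verbatim. You need either a time-change argument (scaling all conductances by $\beta_t^{-1}$ rescales time and exposes the factor $t/\beta_t=t^{\eta/(\eta+1)}$ in the exponent) or the change-of-measure Lemma~\ref{cor:LDP_density_estimate}, which the paper uses to compare the actual conductance profile on the event $E_\delta$ to the fixed profile $\varphi$. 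With those amendments the lower bound closes.

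The upper bound is where the genuine gap lies, and it is the very obstacle you flag at the end of your third paragraph. Theorem~\ref{thm:dvg-ldp}'s upper bound has an error term that is \emph{not} uniform in $a$: the mixing time of the chain blows up as $\min_{z,e}a_{z,e}\to 0$, and the relevant conductances for this problem are precisely of order $t^{-1/(\eta+1)}\to 0$. A truncation on $\{\min a\geq\kappa_t\}$ does not resolve this, because any $\kappa_t$ small enough to capture the dominant contribution is small enough to spoil the uniformity of the DVG asymptotics at time $t$; one would have to re-examine the constants in the proof of Theorem~\ref{thm:dvg-ldp} and track how they degrade, which your sketch does not do. The paper sidesteps the issue entirely by replacing the asymptotic quenched LDP with the non-asymptotic joint-density estimate of \cite[Theorem~3.6]{BHK07}, which gives, for every fixed $t$ and every $a$,
\[
\P_x^a\big(\tfrac1t\ell_t\in\Ccal_t,\,\supp(\ell_t)\subset B\big)\leq\exp\Big\{-t\inf_{\mu\in\Ccal_t}\Lambda_a(B,\mu)\Big\}\e^{C_t},
\]
with an explicit error term $C_t$ that does not threaten the slower scale $t^{\eta/(\eta+1)}$. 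H\"older's inequality then separates $\Lambda_a$ into the Dirichlet expression $\sum_{z,e}|g(z+e)-g(z)|^{2\eta/(\eta+1)}$ and the conductance sum $\sum_{z,e}a_{z,e}^{-\eta}$, and a Cram\'er/Chebyshev estimate (the finite-box analogue of Lemma~\ref{lem:ubound-environment}) controls the latter. This is the step your proposal is missing; without some substitute for \cite{BHK07} the upper bound does not close.
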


\subsection{LDPs in growing boxes}\label{sec-LDPSgrowing}

\noindent Now we come to the main purpose of the present paper: we extend the annealed LDP of Theorem~\ref{thm:ldp-finite} to a region $B$ that depends on time $t$ and tends to $\Z^d$. Our main motivation for this problem stems from the wish to understand a version of the parabolic Anderson model (PAM) where the underlying diffusion is itself taken random as the random conductance model; see Section~\ref{sec:intro-pam} below.

Consider a spatial scaling function $\alpha_t\in(1,\infty)$ with $1\ll\alpha_t\ll t^{1/2}$ and replace $B$ by a time-dependent, growing set $B_t=\alpha_t G\cap\Z^d$, where we fix $G\subset\R^d$ as an open, connected and bounded set containing the origin and having a sufficiently regular boundary. In order to properly incorporate the $t$-dependence of the set $B_t$, we consider the normalised and rescaled version $L_t$ of $\ell_t$, given by
\begin{equation}\label{eqn:rescaled_local_times}
L_t(x):=\frac{\alpha_t^d}{t}\ell_t(\lfloor\alpha_tx\rfloor),\qquad x\in \R^d,t>0.
\end{equation}
Observe that $L_t$ is an $L^1$-normalised random step function on $\R^d$, having support in $G$ on the event $\{\supp(\ell_t)\subset \alpha_t G\}$. Hence, $L_t$ is a member of the set 
$$
\Fcal=\{f^2\,\colon\, f\in L^2(G),\,\Vert f\Vert_2=1\},
$$ 
which we equip with the weak topology of integrals against bounded continuous functions $G\to\R$. In the simple case of constant non-random conductances $a_{xy}\equiv 1$, i.e., simple random walk, it is already known that $L_t$ conditioned on the event $\{\supp(\ell_t)\subset \alpha_t G\}$ satisfies a large deviation principle on $\Fcal$ with scale $t\alpha_t^{-2}$ and rate function $I^{\ssup{\rm c}}_0=I^{\ssup{\rm c}}-\inf_\Fcal I^{\ssup{\rm c}}$, where 
\begin{equation}\label{eqn:rate-function-brownian}
I^{\ssup{\rm c}}(f^2)=\begin{cases}
\sum_{i=1}^d\int_G \big(\partial_if(y)\big)^2\,\d y=\|\nabla f\|_2^2,&\quad f\in H_0^1(G),\\
\infty,&\quad\text{otherwise,}
       \end{cases}
\end{equation}
see e.g.~\cite{GKS07}. Here, the superscript c stands for \emph{continuous}, as the local times have rescaled to a continuous object. The additional factor of $\alpha_t^{-2}$ in the scale results from the transition from squares of differences (that occur in the Donsker-Varadhan-G\"artner rate function) to squares of derivatives in the rate function above. This also reflects the natural scaling behavior of the Laplacian, and a simple argument involving the central limit theorem easily shows that $t\alpha_t^{-2}$ is the exponential scale of the non-exit probability from a box with radius $\alpha_t$ up to time $t$.

Let us turn to annealed asymptotics in the presence of random conductances. We now establish a continuous analog to Theorem~\ref{thm:ldp-finite}. Introduce a new scale function $\gamma$ by 
$$
\gamma_t=t^{\frac\eta{1+\eta}}\alpha_t^{\frac{d-2\eta}{1+\eta}}.
$$ 
A continuous analog to the rate function in Theorem~\ref{thm:ldp-finite} is given by $J^{\ssup{\rm c}}_0=J^{\ssup{\rm c}}-\inf_{\Fcal}J^{\ssup{\rm c}}$, where 
\begin{equation}\label{ratefct}
J^{\ssup{\rm c}}(f^2)=\begin{cases}
K_{\eta,D}\sum_{i=1}^d\int_G\big\vert\partial_i f(y)\big\vert^{\frac{2\eta}{1+\eta}}\,\d y=K_{\eta,D}\sum_{i=1}^d\|\partial_i f\|_p^p,&\mbox{if }f\in H_0^1(G),\\
\infty,&\text{otherwise,}\end{cases}
\end{equation} 
where $p=\frac{2\eta}{1+\eta}\in(0,2)$, and $K_{\eta,D}$ is as in Theorem~\ref{thm:ldp-finite}. (Note that there is no standard notation for this in terms of $\nabla f$.) It turns out that $J^{\ssup{\rm c}}$ has compact level sets in the case $\eta>d/2$ only. This corresponds to conductances the tails of which at zero are not too thick. In the converse case, we thus cannot hope for a full LDP to hold. Let us for that reason consider the case $\eta>d/2$ first. Recall that $G$ is a bounded open set containing the origin with regular boundary.

\begin{theorem}[Annealed asymptotics, time-dependent region]\label{thm:ldp-growing-good}
Suppose that Assumption~\ref{assumption:tails} holds, and assume that $\eta>d/2$. In case $d=1$, suppose that $\eta\geq 1$. Furthermore, assume that the conductances are bounded almost surely and that $a_{xy}\1\{a_{xy}\leq \eps\}$ possesses, for some $\eps>0$, a density that is non-decreasing. Pick a scale function $(\alpha_t)_{t>0}$ such that  $1\ll\alpha_t^{d+2}\ll t(\log t)^{-(1+\eta)/\eta}$. 

Then the distributions of $L_t$ under the conditional annealed measures $\langle\P_0^a(\,\cdot\,|\,\supp(\ell_t)\subset \alpha_t G)\rangle$ satisfies a large-deviation principle on $\Fcal$ with good rate function $J^{\ssup{\rm c}}_0$.
\end{theorem}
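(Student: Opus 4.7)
The plan is to combine the quenched Donsker-Varadhan-G\"artner LDP (Theorem~\ref{thm:dvg-ldp}) applied in the growing box $B_t=\alpha_t G\cap\Z^d$ with an annealed integration over the bond weights governed by the Laplace-type asymptotic
$$
\log\bigl\langle\exp(-\lambda a(z,e))\bigr\rangle\sim-K_{\eta,D}\lambda^{\eta/(1+\eta)}\qquad(\lambda\to\infty),
$$
obtained from Assumption~\ref{assumption:tails} by saddle-point analysis of $\int\exp(-\lambda\eps-D\eps^{-\eta})\d\eps$. For the upper bound I would first establish exponential tightness of $L_t$ in $\Fcal$ at speed $\gamma_t$ (using the control via the Dirichlet form), and then, for a closed set $\Ccal\subset\Fcal$, condition on $a$ and apply Theorem~\ref{thm:dvg-ldp} to get a quenched bound of the form $\exp(-t\inf_{h^2\in\Ccal_t}I^{\ssup{\rm d}}_a(h^2)+o(\gamma_t))$, with $\Ccal_t$ the natural lattice discretisation of $\Ccal$. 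Exchanging infimum and annealed expectation via a covering argument (this costs a logarithmic factor that forces $\alpha_t^{d+2}\ll t(\log t)^{-(1+\eta)/\eta}$) and factorising over independent bonds, the Laplace identity above, applied with $\lambda=t|h(z+e)-h(z)|^2$ on each bond, gives the effective cost $K_{\eta,D}t^{\eta/(1+\eta)}\sum_{z,e}|h(z+e)-h(z)|^p$. The rescaling $h(z)=\alpha_t^{-d/2}\tilde f(z/\alpha_t)$ then converts this lattice sum into $\gamma_tK_{\eta,D}\sum_i\int_G|\partial_i\tilde f|^p=\gamma_tJ^{\ssup{\rm c}}(\tilde f^2)$, yielding the upper bound after passing to a lower-semicontinuous hull.

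\textbf{Lower bound.} For an open $\Ocal\subset\Fcal$ and a target $f^2\in\Ocal$ with $J^{\ssup{\rm c}}(f^2)<\infty$, I would approximate $f$ by $\tilde f\in C^\infty_c(G)$ with $\|\tilde f\|_2=1$ and set $g(z)=\alpha_t^{-d/2}\tilde f(z/\alpha_t)$. For each bond $(z,z+e)$ near $B_t$, I would restrict $a(z,e)$ to a narrow window around the saddle-point value $\eps^\ast(z,e)=\bigl(D\eta/[t|g(z+e)-g(z)|^2]\bigr)^{1/(1+\eta)}$. The monotone-density hypothesis on $a\,\1\{a\le\eps\}$ ensures that the $\Pr$-cost of this restriction is $(1+o(1))D(\eps^\ast(z,e))^{-\eta}$ per bond; on this event, Theorem~\ref{thm:dvg-ldp} applied with deterministic conductances $\eps^\ast(\cdot,\cdot)$ produces a quenched lower bound of the form $\exp(-tI^{\ssup{\rm d}}_{\eps^\ast}(g^2)+o(\gamma_t))$ for $L_t$ weakly close to $\tilde f^2$. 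Adding the two contributions and using the definition of $\eps^\ast$ gives the exponent $\gamma_tJ^{\ssup{\rm c}}(\tilde f^2)+o(\gamma_t)$; finally, letting $\tilde f\to f$ in $H^1_0(G)$ (where $J^{\ssup{\rm c}}$ is continuous along smooth approximations) completes the lower bound.

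\textbf{Main obstacle.} The most delicate point is that the errors in the finite-box LDP depend on $|B_t|\sim\alpha_t^d$ and on the spectrum of $\Delta^a$ in $B_t$, and must stay $o(\gamma_t)$; the covering argument in the upper bound also consumes a logarithmic factor. Balancing these error terms against the main cost $\gamma_t$ is exactly what produces the admissible range $1\ll\alpha_t^{d+2}\ll t(\log t)^{-(1+\eta)/\eta}$, and the a.s.\ boundedness of $a$ together with the monotone-density assumption are what make these estimates uniform across bonds. Finally, the restriction $\eta>d/2$ (and $\eta\ge1$ when $d=1$) enters because then $p=\tfrac{2\eta}{1+\eta}>\tfrac{2d}{d+2}$, so that the Sobolev embedding $W^{1,p}(G)\hookrightarrow L^2(G)$ ensures the sublevel sets of $J^{\ssup{\rm c}}$ are compact in the weak topology on $\Fcal$, making $J^{\ssup{\rm c}}_0$ a good rate function and ensuring that $\inf_\Fcal J^{\ssup{\rm c}}$ is attained.
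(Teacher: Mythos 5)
Your overall strategy is sound and your diagnosis of where the hypotheses ($\eta>d/2$, boundedness of $a$, the density assumption, the logarithmic cap on $\alpha_t$) enter is accurate. However there is one genuine gap that affects both bounds, and one place where your route through the upper bound diverges nontrivially from the paper's.

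\textbf{The gap: Theorem~\ref{thm:dvg-ldp} as stated is not usable in a growing box.} Theorem~\ref{thm:dvg-ldp} is an asymptotic statement for a \emph{fixed} finite region $B$ and \emph{fixed} conductances $a$, with $t\to\infty$. It gives no quantitative control of the prefactor, and therefore cannot by itself deliver the claimed bound $\exp(-t\inf_{h^2\in\Ccal_t}I^{\ssup{\rm d}}_a(h^2)+o(\gamma_t))$ when $B=B_t=\alpha_tG\cap\Z^d$ grows and the conductances have a built-in $t$-dependence. For the upper bound the paper replaces it by the explicit finite-$t$ estimate of \cite[Theorem 3.6]{BHK07}, whose error term $C_t=|B_t|\log(\eta_{B_t}\sqrt{8\e}t)+\log|B_t|+|B_t|/(4t)$ is what produces the constraint $\log t\ll\beta_t^\eta$. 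For the lower bound the paper proves Proposition~\ref{lem:rescaled-fix-ldp}, a DVG-type LDP for $L_t$ under $\P^{\beta_t^{-1}\varphi_t}$ in the growing box; proving it requires controlling both the convergence of the principal eigenvalue of $-\alpha_t^2\Delta^{\varphi_t}+V_t$ to its continuous homogenised limit (Lemma~\ref{lem:eigenvalues-converge}) and the polynomial decay of the principal eigenfunction at the origin (Lemma~\ref{lem-eigenfunction-decay}). These are the technical heart of the proof and your proposal passes over them; citing Theorem~\ref{thm:dvg-ldp} does not suffice.

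\textbf{On the upper bound: a genuinely different route, and a comparison.} You propose to exchange $\langle\cdot\rangle$ and $\inf_h$ via a $\delta$-net over $\Ccal_t$, factorise $\langle\exp(-tI^{\ssup{\rm d}}_a(h^2))\rangle$ over bonds, and then apply the saddle-point Laplace asymptotic $\log\langle\exp(-\lambda a)\rangle\sim-K_{\eta,D}\lambda^{\eta/(1+\eta)}$ bondwise with $\lambda=t|h(z+e)-h(z)|^2$. This can be made to work: the bound $\langle\max_j F_j\rangle\le N\max_j\langle F_j\rangle$ goes the right way, and a net of mesh $\delta\sim(\beta_t\alpha_t^2)^{-1}$ in $\ell^2(B_t)$ has $\log N\lesssim\alpha_t^d\log t\ll\gamma_t$ under the stated hypothesis on $\alpha_t$. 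But note that the paper avoids the covering argument entirely: it applies H\"older's inequality \eqref{Hoelder} to obtain the pointwise decoupling
$$
\Lambda_a(B_t,g^2)\ge\Big(\sum_{z,e}|g(z+e)-g(z)|^{\frac{2\eta}{\eta+1}}\Big)^{\frac{\eta+1}{\eta}}\Big(\sum_{z,e}a(z,e)^{-\eta}\Big)^{-\frac1\eta},
$$
after which the infimum over $g$ and the annealed average separate: the $g$-infimum gives $\chi^{\ssup{\rm d}}(B_t,\Ccal_t)$, and the conductance average is a one-dimensional Laplace problem for the scalar $\sum_{z,e}a(z,e)^{-\eta}$ handled in Lemma~\ref{lem:ubound-environment}. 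That buys a much shorter proof with no entropy counting. Either way you still need the analytic step (Lemma~\ref{lem:tilted-rescaling}) relating $\alpha_t^{(2\eta-d)/(\eta+1)}\chi^{\ssup{\rm d}}(B_t,\Ccal_t)$ to $\chi^{\ssup{\rm c}}(G,\Ccal)$ via a finite-element interpolation and the lower semicontinuity established from the compactness of the level sets; your proposal implicitly uses this when rescaling to the continuous norm but does not say how to justify passing to the limit.

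In short: the lower bound plan is essentially the paper's, but it cannot rest on Theorem~\ref{thm:dvg-ldp} alone; you must prove the $t$-dependent, growing-box version (Proposition~\ref{lem:rescaled-fix-ldp}) along with the eigenfunction-decay estimate. The upper bound plan is a legitimately different decomposition (covering plus bondwise Laplace asymptotics in place of H\"older decoupling), and correct in spirit, but it needs the quantitative finite-$t$ DVG estimate from BHK07 and the discrete-to-continuous rescaling lemma to close.
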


More explicitly, Theorem~\ref{thm:ldp-growing-good} says that $J^{\ssup{\rm c}}_0$ is has compact level sets, and 
\begin{eqnarray}
\liminf_{t\to\infty}\frac 1{\gamma_t}\log\langle\P^a_0(L_t\in \Ocal,\supp(\ell_t)\subset \alpha_tG)\rangle&\geq&-K_{\eta,D}\chi^{\ssup  {\rm c}}(G,\Ocal),\qquad\text{for }\Ocal\subset\Fcal\text{ open,}\label{eqn:dvg-ldp-lowerLarge}\\
\limsup_{t\to\infty}\frac 1{\gamma_t}\log\langle\P^a_0(L_t\in \Ccal,\supp(\ell_t)\subset \alpha_tG)\rangle&\leq&-K_{\eta,D}\chi^{\ssup  {\rm c}}(G,\Ccal),\qquad\text{for }\Ccal\subset\Fcal\text{ closed,}\label{eqn:dvg-ldp-upperLarge}
\end{eqnarray}
where
\begin{equation}\label{eqn:varprob-continuousAcal}
\chi^{\ssup{\rm c}}(G,\Acal)=\inf\Big\{\sum_{i=1}^d\int_G\big\vert\partial_i f(y)\big\vert^{\frac{2\eta}{1+\eta}}\,\d y\,\colon\,f\in H_0^1(G),\,\Vert f\Vert_2=1, f^2\in\Acal\Big\}.
\end{equation}

A heuristic explanation of Theorem~\ref{thm:ldp-growing-good} is in Section~\ref{sec:heur}. The proof is in Section~\ref{sec:ldp-growing-proofs}.  The technical assumption on the existence of an increasing density of small conductances will be used in the proof of the lower bound, where we will confine the conductances very strongly. The technical assumption on the boundedness and the additional logarithmic term in the upper bound for $\alpha_t$ will help us to make the proof of the upper bound less cumbersome. 

There is no reason to expect that the rate function $J^{\ssup{\rm c}}_0$ is convex. In \eqref{Jcident} we give an alternative formula for $J^{\ssup{\rm c}}_0$, but also this gives no hint at convexity, since the min-max-formula for interchange of infimum and supremum \cite[p.~151]{DZ98} cannot be applied, unlike in \cite{CGZ00} at the end of Section 3. Rather we presume that $J^{\ssup{\rm c}}_0$ is not convex. See \cite[Prop.~4]{K00} for a proof of convexity in the case $p\geq 2$.

As we already mentioned in connection with Theorem~\ref{thm:ldp-finite}, and as we will explain in detail in Section~\ref{sec:heur}, the main contribution of the conductances to the LDP is to assume very small values, in order to make it easier for the walk to stay in the set $\alpha_t G$ for $t$ time units; this is a large-deviation event by the assumption $1\ll\alpha_t\ll t^{\frac 1{d+2}}$. By the assumption $\eta>d/2$, the probabilistic cost for this contribution is small enough that it can be performed all over the growing set $\alpha_t G\cap\Z^d$, as the cost for assuming small values is not too high. We will see in the next section that $d/2$ is precisely the threshold for $\eta$ for this to happen.

\subsection{Non-exit probabilities, variational formulas, and the case $\boldsymbol{\eta\leq d/2}$}\label{sec:intro-varprobs}

\noindent Let us look at non-exit probabilities and find two independent arguments, a probabilistic and an analytic one, for the existence of an interesting phase transition, as $\eta$ traverses $d/2$.

As a corollary of Theorem~\ref{thm:ldp-finite}, we pointed out in \cite{KSW11} that the non-exit probability from the finite region $B$ satisfies 
\begin{equation}\label{eqn:nonexit-finite}
\log\langle\P^a_0\big(\supp(\ell_t)\subset B\big)\rangle\sim -t^{\frac\eta{1+\eta}}K_{\eta,D}\chi^{\ssup{\rm d}}(B),
\end{equation}
where
\begin{equation}\label{eqn:varprob-discrete}
\chi^{\ssup{\rm d}}(B)=\inf\Big\{\sum_{e\in\Ncal}\sum_{z\in\Z^d}\big\vert g(z+e)-g(z)\big\vert^{\frac{2\eta}{1+\eta}}\,\colon\,g\in \ell^2(\Z^d),\,\supp (g)\subset B,\,\Vert g\Vert_2=1\Big\}.
\end{equation}
In the same way, we obtain as a corollary from Theorem~\ref{thm:ldp-growing-good} that, in the case $\eta>d/2$,
\begin{equation}\label{eqn:nonexit-growing}
\log\langle\P^a_0\big(\supp(\ell_t)\subset\alpha_tG\big)\rangle\sim -t^{\frac\eta{1+\eta}}\alpha_t^{\frac{d-2\eta}{1+\eta}}K_{\eta,D}\chi^{\ssup{\rm c}}(G),
\end{equation}
where $\chi^{\ssup{\rm c}}(G)=\chi^{\ssup{\rm c}}(G,\Fcal)$ is the continuous version of $\chi^{\ssup{\rm d}}(B)$; see \eqref{eqn:varprob-continuousAcal}.

However, in the case $\eta\leq d/2$, \eqref{eqn:nonexit-growing} is awkward, since the left-hand side is obviously non-decreasing in $\alpha_t$, but the right-hand side is non-increasing. This suggests that $\chi^{\ssup{\rm c}}(G)=0$ in that case. The following result shows that the non-exit probability is in fact on a slower scale.

\begin{theorem}\label{thm:ldp-growing-bad}
Suppose $1\ll\alpha_t\ll t^{\frac \eta{d(\eta+1)}}$ and that Assumption~\ref{assumption:tails} holds. In addition, assume that $\eta\leq d/2$. Then,
\begin{itemize}
\item[(i)] The level sets of $J^{\ssup{\rm c}}$ are not closed and in particular not compact,
\item[(ii)] for all finite and connected sets $B\subset\Z^d$ containing the origin,
\begin{equation}\label{eqn:ldp-growing-bad-lbound}
\liminf_{t\to\infty}t^{-\frac{\eta}{\eta+1}}\log\langle\P^a_0\big(\supp(\ell_t)\subset\alpha_tG\big)\rangle\geq -K_{\eta,D}\chi^{\ssup{\rm d}}(B),
\end{equation}
\item[(iii)]
\begin{equation}\label{eqn:ldp-growing-bad-ubound}
\limsup_{t\to\infty}t^{-\frac{\eta}{\eta+1}}\log\langle\P^a_0\big(\supp(\ell_t)\subset\alpha_tG\big)\rangle\leq -K_{\eta,D}\chi^{\ssup{\rm d}}(\Z^d).
\end{equation}
\end{itemize}
In the case $\eta=d/2$ we have the corresponding lower bound
\begin{equation}\label{eqn:ldp-growing-bad-lbound-exact}
\liminf_{t\to\infty}t^{-\frac{\eta}{\eta+1}}\log\langle\P^a_0\big(\supp(\ell_t)\subset\alpha_tG\big)\rangle\geq -K_{\eta,D}\chi^{\ssup{\rm d}}(\Z^d).
\end{equation}
\end{theorem}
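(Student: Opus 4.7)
The plan is to prove (i)--(iv) in that order, with (iii) being the main technical hurdle. For (i), the idea is to exhibit a concentrating sequence in $\Fcal$ along which $J^{\ssup{\rm c}}$ remains bounded but whose weak limit lies outside $\Fcal$. Pick $\varphi\in C_c^\infty(\R^d)$ non-negative with $\|\varphi\|_2=1$ and $\supp(\varphi)\subset\{|x|<1\}$, and set $f_n(x)=n^{d/2}\varphi(nx)$. For $n$ large, $\supp(f_n)\subset G$, so $f_n^2\in\Fcal$; the change of variables $y=nx$ yields
$$\|\partial_if_n\|_p^p \,=\, n^{(2\eta-d)/(1+\eta)}\,\|\partial_i\varphi\|_p^p,$$
with $p=\tfrac{2\eta}{1+\eta}$, the exponent being non-positive precisely when $\eta\leq d/2$. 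Hence $J^{\ssup{\rm c}}(f_n^2)$ stays bounded, whereas $f_n^2\rightharpoonup\delta_0$ weakly as Borel measures on $\overline G$, and $\delta_0\notin\Fcal$. This exhibits a level set containing a sequence with no limit point in $\Fcal$, so the level set is neither closed in the ambient weak topology nor compact.

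For (ii), the assumption $\alpha_t\to\infty$ implies $B\subset\alpha_tG\cap\Z^d$ for all $t$ sufficiently large, whence $\{\supp(\ell_t)\subset B\}\subset\{\supp(\ell_t)\subset\alpha_tG\}$ and
$$\langle\P^a_0(\supp(\ell_t)\subset\alpha_tG)\rangle \,\geq\, \langle\P^a_0(\supp(\ell_t)\subset B)\rangle.$$
Taking logarithms, dividing by $t^{\eta/(\eta+1)}$ and applying \eqref{eqn:nonexit-finite} yields \eqref{eqn:ldp-growing-bad-lbound}.

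Part (iii) is the hardest, and will be established by a spectral upper bound combined with a coarse-graining of $B_t:=\alpha_tG\cap\Z^d$ by translates of a reference box $Q_R$. A standard Dirichlet eigenfunction expansion yields
$$\P^a_0(\supp(\ell_t)\subset B_t) \,\leq\, C\,|B_t|^{1/2}\,e^{-t\lambda(B_t,a)},$$
with $\lambda(B_t,a)$ the principal Dirichlet eigenvalue of $-\Delta^a$ on $B_t$. Tiling $B_t$ by $N(t,R)=O(\alpha_t^d/R^d)$ translates $Q_1,\ldots,Q_{N(t,R)}$ of $Q_R$, a discrete IMS-type partition-of-unity identity yields $\lambda(B_t,a)\geq\min_k\lambda(Q_k,a)-\mathrm{err}(R,a)$; combining this with $e^{-t\min_k\lambda(Q_k,a)}\leq\sum_ke^{-t\lambda(Q_k,a)}$, stationarity of the conductances, and the fixed-box annealed asymptotic behind Theorem~\ref{thm:ldp-finite}, $\langle e^{-t\lambda(Q_R,a)}\rangle=\exp(-K_{\eta,D}\chi^{\ssup{\rm d}}(Q_R)t^{\eta/(\eta+1)}(1+o(1)))$, one arrives at
$$\langle\P^a_0(\supp(\ell_t)\subset B_t)\rangle \,\leq\, N(t,R)\,e^{O(t\cdot\mathrm{err}(R))}\exp\bigl(-K_{\eta,D}\chi^{\ssup{\rm d}}(Q_R)t^{\eta/(\eta+1)}(1+o(1))\bigr),$$
up to polynomial prefactors. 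The assumption $\alpha_t\ll t^{\eta/(d(\eta+1))}$ makes $\log N(t,R)$ negligible on the scale $t^{\eta/(\eta+1)}$, so letting $t\to\infty$ and then $R\to\infty$ together with the monotone convergence $\chi^{\ssup{\rm d}}(Q_R)\downarrow\chi^{\ssup{\rm d}}(\Z^d)$ gives \eqref{eqn:ldp-growing-bad-ubound}. The delicate point is to arrange $R=R(t)\to\infty$ so that $t\cdot\mathrm{err}(R)=o(t^{\eta/(\eta+1)})$; this is especially tight at the critical exponent $\eta=d/2$ and, since no upper bound on the conductances is assumed here, requires first a high-probability truncation of the upper tails of $a$ (at a level whose probabilistic cost is itself negligible on the relevant scale) and then an eigenfunction-adapted choice of the partition of unity so that the error term benefits from the $\ell^2$-smallness of the optimising eigenfunction outside its localisation region.

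For (iv), apply (ii) along a nested sequence of finite connected sets $B_n\uparrow\Z^d$ containing the origin; since finitely supported functions are dense in $\ell^2(\Z^d)$, the monotone sequence $\chi^{\ssup{\rm d}}(B_n)$ decreases to $\chi^{\ssup{\rm d}}(\Z^d)$, and taking the supremum over $n$ in \eqref{eqn:ldp-growing-bad-lbound} yields \eqref{eqn:ldp-growing-bad-lbound-exact}.
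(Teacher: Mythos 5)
Parts (i) and (ii) of your proposal are essentially fine and align with the paper's approach: the scaling argument for (i) is a valid alternative to the paper's explicit construction in Lemma~\ref{lem:varprob-cont-not-solvable} (the exponent computation $\|\partial_if_n\|_p^p = n^{(2\eta-d)/(1+\eta)}\|\partial_i\varphi\|_p^p$ is correct and gives boundedness of $J^{\ssup{\rm c}}(f_n^2)$ exactly when $\eta\le d/2$), and (ii) is just the monotonicity observation the paper also uses.

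For (iii), you take a genuinely different route. The paper bounds $\lambda_1^{\ssup t}(a)$ from below directly via H\"older's inequality applied to the Rayleigh--Ritz quotient (see \eqref{Hoelder} and \eqref{eqn:Hoelder-eigenvalue2}), which cleanly decouples the deterministic quantity $\chi^{\ssup{\rm d}}(B_t)\ge\chi^{\ssup{\rm d}}(\Z^d)$ from the environment functional $\sum_e\int_G a_t(y,e)^{-\eta}\,\d y$; the latter is then handled by the Chernoff-type estimate of Lemma~\ref{lem:ubound-environment}. This avoids any partition of unity and, crucially, does not need an upper bound on the conductances. Your IMS/coarse-graining scheme, by contrast, runs into the two problems you yourself flag: the IMS error term is proportional to the supremum of the conductances, which under Assumption~\ref{assumption:tails} alone is unbounded; and making the error term $o(t^{\eta/(\eta+1)})$ while $R\to\infty$ is delicate at $\eta=d/2$. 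You outline a fix (truncating upper tails, eigenfunction-adapted partition), but these are not carried out, so the proof as written is incomplete. Note also that one does not in fact need $\chi^{\ssup{\rm d}}(Q_R)\downarrow\chi^{\ssup{\rm d}}(\Z^d)$ for (iii): the trivial inequality $\chi^{\ssup{\rm d}}(Q_R)\ge\chi^{\ssup{\rm d}}(\Z^d)$ already suffices for the direction you need. In short, (iii) is a reasonable but substantially harder path than the H\"older argument the paper actually uses, and the hard part is not resolved.

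For the final display \eqref{eqn:ldp-growing-bad-lbound-exact}, your justification is genuinely wrong as stated. The assertion ``$\chi^{\ssup{\rm d}}(B_n)$ decreases to $\chi^{\ssup{\rm d}}(\Z^d)$ because finitely supported functions are dense in $\ell^2(\Z^d)$'' does not follow: the discrete $p$-energy $g\mapsto\sum_{z,e}\vert g(z+e)-g(z)\vert^p$ with $p<2$ is not controlled by the $\ell^2$-topology, so naive truncation of a near-minimiser to a finite box can introduce an uncontrolled error at the boundary. The paper's Lemma~\ref{lem:disc-varprob-converge} proves the convergence via a smooth cutoff $\xi(\cdot/n)$ and a careful H\"older estimate; the resulting boundary error is bounded by a term involving $n^{d-\frac{2p}{2-p}}$, which is bounded \emph{precisely because} $\eta\ge d/2$. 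This explains why the paper restricts the sharp lower bound to $\eta=d/2$ and why the claim is not automatic. You should either cite that lemma or reproduce the cutoff argument; density alone does not close this gap.
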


Hence, the leading-order logarithmic asymptotics of the non-exit probability do not depend on the set $G\subset\R^d$ nor on the scale function $\alpha_t$. The proof of Theorem~\ref{thm:ldp-growing-bad} is in Section~\ref{sec-proofThmBad}. We will see in Section~\ref{sec:heur} below that the heuristics for the LDP of Theorem~\ref{thm:ldp-growing-good} also apply for the case $\eta\leq d/2$ of Theorem~\ref{thm:ldp-growing-bad}. Its Assertion (i) gives a first reason why nevertheless the LDP does not hold true. Assertion (iii) gives another one: Except for the special case $\eta=d/2$, we clearly have 
$$
\gamma_t=t^{\frac\eta{1+\eta}}\alpha_t^{\frac{d-2\eta}{1+\eta}}\ll t^{\frac{\eta}{1+\eta}}.
$$ 
This means that the non-exit probability is on a slower (i.e., probabilistically less costly) scale than the one the LDP in Theorem \ref{thm:ldp-growing-good} would imply.

A heuristic explanation is the fact that $\eta\leq d/2$ corresponds to a high probabilistic cost for very small conductances. Therefore, the non-exit probability is governed by the event where conductances are very small only on a bounded number of sites, or at the most on a set of sites much smaller that $B_t$, in contrast to the event where conductances are small \emph{everywhere} which would lead to the scale $\gamma_t$. The random walk is then slowed down so much that it does not even leave the smaller set. Theorem \ref{thm:ldp-growing-bad} shows that this is exactly the behavior that governs annealed asymptotics, at least those of non-exit probabilities, in the case $\eta\leq d/2$. 

Combining the results of Theorems \ref{thm:ldp-growing-good} and \ref{thm:ldp-growing-bad}, we would like to remark that the scale of the non-exit probabilities is decreasing in $\eta$ across all values $\eta>0$, since under the restriction that $1\ll\alpha_t\ll t^{\frac \eta{d(\eta+1)}}$,
$$
\gamma_t=t^{\frac\eta{1+\eta}}\alpha_t^{\frac{d-2\eta}{1+\eta}}\gg t^{\frac{\eta^\ast}{1+\eta^\ast}}\qquad\mbox{for any }\eta>\eta^\ast=d/2.
$$

The different behaviours in the two regimes are also reflected by analytic properties of the arising variational problems, as we will see now. In fact, for $\eta>d/2$, the continuous variational problems are well-behaved and admit standard compactness arguments, but not the discrete ones, and {\it vice versa}. Recall that $\chi^{\ssup{\rm c}}(G)$ equals $\chi^{\ssup{\rm c}}(G,\Fcal)$ defined in \eqref{eqn:varprob-continuousAcal}.

\begin{prop}\label{prop:varprobs}
\begin{itemize}
\item[(i)] Assume that $\eta>d/2$. Then,
\begin{itemize}
\item[$\bullet$] $\chi^{\ssup{\rm c}}(G)>0$, and the continuous variational problem in \eqref{eqn:varprob-continuousAcal} for $\Acal=\Fcal$  possesses at least one minimiser. In the case $d=1$, we need to make the additional assumption that $\eta\geq 1$.
\item[$\bullet$] $\chi^{\ssup{\rm d}}(\Z^d)=0$ and the discrete variational problem in \eqref{eqn:varprob-discrete} (with $B=\Z^d$)  has no minimiser.
\end{itemize}
\item[(ii)] Assume that $\eta\leq d/2$. Then,
\begin{itemize}
\item[$\bullet$] $\chi^{\ssup{\rm c}}(G)=0$ and the continuous variational problem in \eqref{eqn:varprob-continuousAcal}  for $\Acal=\Fcal$ has no minimiser.
\item[$\bullet$] $\chi^{\ssup{\rm d}}(\Z^d)>0$ if and only if $d>1$.
\end{itemize}
\end{itemize}
\end{prop}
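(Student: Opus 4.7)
Throughout, set $p=\frac{2\eta}{1+\eta}\in(0,2)$, so that $\eta>d/2$ is equivalent to $p>\frac{2d}{d+2}$ (the critical Sobolev exponent for $W^{1,p}\hookrightarrow L^2$), and $\eta\geq 1$ is equivalent to $p\geq 1$. Both halves of the proposition should reduce to (a) Sobolev-type inequalities in the regime where $p$ is above the critical exponent, giving a positive infimum and (via reflexivity plus compact embedding) a minimiser, and (b) scaling/spreading or concentration arguments in the opposite regime, giving vanishing infimum and hence non-attainment. The main point is that the critical exponent $\frac{2d}{d+2}$ plays opposite roles for $\chi^{\ssup{\rm c}}$ (bounded domain: Sobolev above, concentration below) and for $\chi^{\ssup{\rm d}}$ (full lattice: spreading above, Sobolev below), which is precisely why exactly one of the two variational problems is well posed on each side of $\eta=d/2$.

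\textbf{Part (i), $\eta>d/2$.} For the continuous problem, note that the Gagliardo-Nirenberg-Sobolev inequality gives $\|f\|_2\leq C_{G,d,p}\|\nabla f\|_p$ for $f\in W^{1,p}_0(G)$, because $p>\frac{2d}{d+2}$ forces the Sobolev conjugate $p^*$ to exceed $2$ and $G$ is bounded; combined with the elementary bound $|\nabla f|^p\leq \sum_i|\partial_i f|^p$ (valid since $p\leq 2$), this yields $\chi^{\ssup{\rm c}}(G)\geq C_{G,d,p}^{-p}>0$. For existence I would run the direct method: a minimising sequence $(f_n)$ in $H^1_0(G)\cap\{\|f\|_2=1\}$ is bounded in $W^{1,p}_0(G)$, hence weakly relatively compact (reflexivity for $p>1$); by Rellich-Kondrachov, $f_n\to f^\ast$ strongly in $L^2$ so $\|f^\ast\|_2=1$, and weak lower semi-continuity of the $L^p$-seminorms identifies $f^\ast$ as a minimiser (density of $C_c^\infty(G)$ in both $H^1_0$ and $W^{1,p}_0$ ensures the two infima coincide). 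For the discrete problem, I build a spreading sequence: fix $\phi\in C_c^\infty(\R^d)$ with $\|\phi\|_2=1$ and set $g_n(z)=n^{-d/2}\phi(z/n)$; Riemann sums give $\|g_n\|_{\ell^2}\to 1$, and a Taylor expansion together with summation-as-integration yields
\[
\sum_{z}\bigl|g_n(z+e)-g_n(z)\bigr|^p \;\sim\; n^{d-\frac{p(d+2)}{2}}\|\partial_e\phi\|_p^p,
\]
whose exponent is negative precisely because $p>\frac{2d}{d+2}$. Hence $\chi^{\ssup{\rm d}}(\Z^d)=0$, and non-attainment follows because any minimiser would be constant in every coordinate direction, but no non-zero constant lies in $\ell^2(\Z^d)$.

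\textbf{Part (ii), $\eta\leq d/2$.} For the continuous problem I reverse the previous scaling: with $\phi\in C_c^\infty(G)$ (possible since $0\in G$) and $\|\phi\|_2=1$, set $f_n(x)=n^{d/2}\phi(nx)$, which for large $n$ is supported in $G$; the computation $\|\partial_i f_n\|_p^p=n^{p(d+2)/2-d}\|\partial_i\phi\|_p^p$ shows the exponent is non-positive, and picking appropriate sequences of concentrations (possibly combined with a small rescaling in the critical case $\eta=d/2$, where one exploits that the best Sobolev constant is not attained on bounded domains) drives the functional to $0$. Hence $\chi^{\ssup{\rm c}}(G)=0$; non-attainment is immediate since a minimiser $f^\ast$ would have $\nabla f^\ast=0$ a.e.\ in $G$, forcing $f^\ast=0$ by the zero trace, contradicting $\|f^\ast\|_2=1$. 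For the discrete problem with $d\geq 2$, the discrete Sobolev inequality on $\Z^d$ gives $\|g\|_{\ell^{p^*}}\leq C_d\|\nabla g\|_{\ell^p}$ (valid as $p<2\leq d$); because $p^\ast\leq 2$ on $\Z^d$ we have $\|g\|_{\ell^2}\leq\|g\|_{\ell^{p^*}}$, yielding $\chi^{\ssup{\rm d}}(\Z^d)\geq C_d^{-p}>0$. For $d=1$ and $\eta\leq 1/2$, I use the plateau sequence $g_n(z)=n^{-1/2}\1_{\{0\leq z<n\}}$ with $\|g_n\|_{\ell^2}=1$, whose only non-vanishing discrete differences are two boundary jumps of size $n^{-1/2}$, giving $\sum_z|g_n(z+1)-g_n(z)|^p=2n^{-p/2}\to 0$.

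\textbf{Main obstacle.} The standard weak-compactness argument in Part (i) requires $p>1$; the borderline $d=1,\eta=1$ (hence $p=1$) forces one to work in $BV(G)$ and argue that a weak-$\ast$ $BV$ limit of a minimising sequence actually lies in $W^{1,1}_0(G)$, which is the most delicate point. A secondary subtlety is the critical case $\eta=d/2$ in the continuous problem: scaling alone leaves the functional invariant and the critical Sobolev inequality does give a positive lower bound, so showing $\chi^{\ssup{\rm c}}(G)=0$ (as claimed) there genuinely needs a concentration at a boundary point or a degenerating profile of Talenti type rather than a simple rescaling, relying on the fact that the best Sobolev constant is not attained on a bounded domain.
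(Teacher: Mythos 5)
Your treatment of $\eta>d/2$ (Gagliardo--Nirenberg--Sobolev plus the direct method for the continuous problem, a spreading sequence for the discrete one) and of $\eta<d/2$ (concentration for the continuous problem, a discrete Sobolev inequality and a plateau sequence for the discrete one) matches the paper's route through Lemmas~\ref{lem:varprob-cont-solvable}--\ref{lem:varprob-disc-solvable}. The genuine gap is the critical case $\eta=d/2$ for the continuous problem, which you flag as a "secondary subtlety'' but then resolve incoherently. Your own scaling gives $\|\partial_if_n\|_p^p\propto n^{p(d+2)/2-d}$, and at $p=\tfrac{2d}{d+2}$ this exponent is exactly $0$, so $J^{\ssup{\rm c}}$ is scale-invariant. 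You correctly note that the critical Sobolev inequality ($p^\ast=2$ here) provides a \emph{positive} lower bound on $\|\nabla f\|_p$ for $\|f\|_2=1$, yet claim that boundary concentration or a degenerating Talenti-type profile, "relying on the fact that the best Sobolev constant is not attained on a bounded domain,'' can still drive the functional to $0$. That cannot work: non-attainment only means there is no minimiser, not that the infimum degenerates. By zero-extension and scale invariance, the Sobolev constant on $W^{1,p}_0(G)$ equals the universal constant $S_{d,p}$, so every admissible $f$ satisfies $\sum_i\|\partial_if\|_p^p\geq\|\nabla f\|_p^p\geq S_{d,p}^{-p}>0$ (using $|v|_p\geq|v|_2$ for $p\leq 2$). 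No test family beats this at $\eta=d/2$. Hence the conclusion $\chi^{\ssup{\rm c}}(G)=0$ actually fails at the boundary $\eta=d/2$, and the cited Sobolev bound is the decisive fact, not an obstacle to be circumvented.

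It is worth saying that the paper's Lemma~\ref{lem:varprob-cont-not-solvable} does not escape this either: the display there writes $A_\eps^{p/2}\gamma^{p/2}$ where, since $|\nabla f_\eps|_2^2=A_\eps^2\gamma^2\cdots$, the correct prefactor is $A_\eps^p\gamma^p$; after the correction the total $\eps$-exponent becomes $d-\tfrac{p(d+2)}{2}$, independent of the auxiliary parameter $\gamma$, and it vanishes precisely at $p=\tfrac{2d}{d+2}$, i.e.\ at $\eta=d/2$. Your "main obstacle'' paragraph therefore puts the finger on a real issue; you simply need to draw the opposite conclusion from the Sobolev inequality you invoke. Your other flagged concern — the $p=1$ (i.e.\ $d=1$, $\eta=1$) borderline, where $W^{1,1}_0(G)$ is not reflexive and a bounded minimising sequence need not admit a weakly convergent subsequence in $W^{1,1}$ — is also legitimate and is glossed over in the paper's Lemma~\ref{lem:varprob-cont-solvable}; one would indeed need a $BV$-argument, or to restrict to $\eta>1$ in $d=1$, to make that step airtight.
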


The proof of Proposition~\ref{prop:varprobs} is in Section~\ref{sec:varprobs}.

 \subsection{Lifshitz tails for the principal eigenvalue}\label{sec-Lifshitz}
 
\noindent Let us denote by $\lambda^a(B)$ the bottom of the spectrum of $-\Delta^a$ in the connected set $B\subset\Z^d$ with Dirichlet (i.e., zero) boundary condition. Using the abbreviation $a(x,e)=a_{x,x+e}$, the well-known Rayleigh-Ritz formula reads
\begin{equation}\label{eqn:RR_LapOmega}
\lambda^a(B)=\inf\Big\{\sum_{z\in\Z^d}\sum_{e\in\Ncal}a(z,e)(g(z+e)-g(z))^2\,\colon\,g\in \ell^2(\Z^d),\Vert g\Vert_2=1,\,\supp(g)\subset B\Big\}.
 \end{equation}
Under Assumption~\ref{assumption:tails}, $\lambda^a(B)$ is a positive random variable with essential infimum equal to zero, and its tails at zero are of high interest from the viewpoint of Lifshitz tails of the random operator $-\Delta^a$. In \cite{KSW11}, we proved as a corollary of Theorem~\ref{thm:ldp-finite} that, for $B$ a fixed bounded set, the Lifshitz tails are given by
\begin{equation}\label{LifshitzB}
\lim_{\eps\downarrow 0}\eps^{\eta}\log\Pr(\lambda^a(B)\leq \eps)=-D\chi^{\ssup{\rm d}}(B)^{\eta+1}.
\end{equation}
 
Now, Theorem~\ref{thm:ldp-growing-good} also yields the analogous corollary for the Lifshitz tails in the $t$-dependent set $B=B_t=\alpha_t G\cap\Z^d$ with $G\subset\R^d$ as in  Theorem~\ref{thm:ldp-growing-good}. For simplicity, we restrict to the case where $\alpha_t$ is a power of $t$.

\begin{cor}\label{Cor-Lifshitz}
Suppose that the assumptions of Theorem~\ref{thm:ldp-growing-good} are satisfied; in particular we assume that $\eta>d/2$. Furthermore, assume that $\alpha_t=t^{s/(d-2\eta)}$ for some $s\in(0,\frac{d-2\eta}{d+2})$. Then
\begin{equation}\label{LifshitzC}
\lim_{\eps\downarrow 0}\eps^{\eta+s}\log\Pr(\lambda^a(\alpha_t G\cap\Z^d)\leq \eps^{1-s})=-\Big(\frac 1\eta \chi^{\ssup{\rm c}}(G)\Big)^{\eta+1}(1-s)^{1-s}(\eta+s)^{\eta+s}.
\end{equation}
\end{cor}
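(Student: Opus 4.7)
I would prove Corollary~\ref{Cor-Lifshitz} by inverting the non-exit probability asymptotics of Theorem~\ref{thm:ldp-growing-good} into a Lifshitz-tail asymptotics for the principal Dirichlet eigenvalue $\lambda^a(B_t)$, where $B_t=\alpha_t G\cap\Z^d$, via a standard Laplace-transform and Legendre-duality argument. The first step is the spectral sandwich
$$
\phi_0^a(0)\,\langle\phi_0^a,\1_{B_t}\rangle\,e^{-t\lambda^a(B_t)}\;\leq\;\P_0^a(\supp\ell_t\subset B_t)\;\leq\;|B_t|^{1/2}\,e^{-t\lambda^a(B_t)}.
$$
The prefactors are polynomial in $t$ under the hypothesis $\alpha_t=t^{s/(d-2\eta)}$ and hence negligible on the exponential scale $\gamma_t=t^{(\eta+s)/(\eta+1)}$. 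Taking logarithms and averaging over the environment, \eqref{eqn:nonexit-growing} becomes the annealed Laplace asymptotics
$$
\log\langle e^{-t\lambda^a(B_t)}\rangle\;\sim\;-K_{\eta,D}\,\chi^{\ssup{\rm c}}(G)\,t^{(\eta+s)/(\eta+1)},\qquad t\to\infty.
$$

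The second step is to convert this Laplace asymptotics into the tail $\Pr(\lambda^a(B_t)\leq\delta)$. The upper bound follows from exponential Chebyshev, $\log\Pr(\lambda^a(B_t)\leq\delta)\leq t\delta+\log\langle e^{-t\lambda^a(B_t)}\rangle$, at each $t$. For the matching lower bound I would construct an explicit conductance event that forces $\lambda^a(B_t)\leq\delta$: fix a minimizer $f\in H^1_0(G)$ of $\chi^{\ssup{\rm c}}(G)$ in \eqref{eqn:varprob-continuousAcal}, which exists by Proposition~\ref{prop:varprobs}(i); impose $a(z,e_i)\approx\delta\alpha_t^2 h_i(z/\alpha_t)$ for $z\in B_t$, with the profile $h_i\propto|\partial_i f|^{-2/(\eta+1)}$ arising as the Euler--Lagrange solution of the constrained problem $\min\{\sum_i\int h_i^{-\eta}\,dx:\sum_i\int h_i|\partial_i f|^2\,dx=1\}$; and test against $g(z)=\alpha_t^{-d/2}f(z/\alpha_t)$ in the Rayleigh--Ritz formula \eqref{eqn:RR_LapOmega}. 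Assumption~\ref{assumption:tails} evaluates the probabilistic cost, and the identification $\sum_i\int h_i^{-\eta}\,dx=\chi^{\ssup{\rm c}}(G)^{\eta+1}$ yields
$$
\log\Pr(\lambda^a(B_t)\leq\delta)\;\sim\;-D\chi^{\ssup{\rm c}}(G)^{\eta+1}\alpha_t^{d-2\eta}\delta^{-\eta}
$$
in the appropriate joint limit, matching the Chebyshev upper bound.

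Substituting $\delta=\eps^{1-s}$ and $\alpha_t^{d-2\eta}=t^s$ and multiplying by $\eps^{\eta+s}$ yields
$$
\eps^{\eta+s}\log\Pr\bigl(\lambda^a(\alpha_t G\cap\Z^d)\leq\eps^{1-s}\bigr)\;\sim\;-D\chi^{\ssup{\rm c}}(G)^{\eta+1}\bigl(t\eps^{\eta+1}\bigr)^{s},
$$
and the corollary follows by taking $\eps\downarrow 0$ along the coupling $t\eps^{\eta+1}\to c_\ast$ dictated by the Legendre stationarity of the preceding Laplace step; the routine simplification using the identity $K_{\eta,D}^{\eta+1}=((\eta+1)/\eta)^{\eta+1}D\eta$ together with $(1-s)+(\eta+s)=\eta+1$ produces the claimed constant $(\chi^{\ssup{\rm c}}(G)/\eta)^{\eta+1}(1-s)^{1-s}(\eta+s)^{\eta+s}$. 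The principal obstacle lies in the explicit lower-bound construction above: identifying the correct Euler--Lagrange profile $h_i$ and matching the leading constant \emph{exactly} (rather than only to logarithmic order) demands a Young/H\"older-type analysis adapted to the $\ell^p$-structure of the rate function \eqref{ratefct} with $p=2\eta/(\eta+1)$, mirroring the fixed-box construction underlying \eqref{LifshitzB} in \cite{KSW11}; the additional approximation of the $\ell^p$-sum by the corresponding $L^p$-integral on the growing box $B_t$ relies in an essential way on the compactness afforded by Proposition~\ref{prop:varprobs}(i) and thus on the hypothesis $\eta>d/2$.
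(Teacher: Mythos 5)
Your high-level route is the same one the paper indicates: replace the non-exit probability by the Laplace transform $\langle e^{-t\lambda^a(B_t)}\rangle$ via a spectral sandwich, read off its asymptotics from \eqref{eqn:nonexit-growing}, and invert. Where the paper simply cites de Bruijn's exponential Tauberian theorem, you carry out the inversion by hand (Chebyshev for the upper bound, explicit conductance construction for the lower bound), and your Euler--Lagrange profile $h_i\propto|\partial_i f|^{-2/(\eta+1)}$ together with the identity $\sum_i\int h_i^{-\eta}=\chi^{\ssup{\rm c}}(G)^{\eta+1}$ is exactly the right normalisation for the fixed-$t$ lower bound. So far this mirrors the \cite{KSW11} template, and the formula $\log\Pr(\lambda^a(B_t)\leq\delta)\sim -D\chi^{\ssup{\rm c}}(G)^{\eta+1}\alpha_t^{d-2\eta}\delta^{-\eta}$ is sound.

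There are, however, two gaps. First, your Chebyshev bound uses $t$ simultaneously as the Laplace parameter and as the box-size index: $\log\Pr(\lambda^a(B_t)\leq\delta)\leq t\delta+\log\langle e^{-t\lambda^a(B_t)}\rangle$. Because $\lambda^a(B_t)$ changes with $t$, this coupled bound is not the one that matches your lower bound. One should instead observe that the non-exit asymptotics of Theorem~\ref{thm:ldp-growing-good} (equivalently \eqref{eqn:nonexit-growing}) hold for any box size $n$ and time horizon $u$ with $n^{d+2}\ll u(\log u)^{-(\eta+1)/\eta}$, write $\log\Pr(\lambda^a(B_n)\leq\delta)\leq u\delta+\log\langle e^{-u\lambda^a(B_n)}\rangle$ for arbitrary $u$, and optimise over $u$ with $n$ held fixed. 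Only this decoupled Chebyshev, with its own saddle $u^\ast\asymp n^{d-2\eta}\delta^{-(\eta+1)}$, reproduces the matching upper bound $-D\chi^{\ssup{\rm c}}(G)^{\eta+1}n^{d-2\eta}\delta^{-\eta}$; the coupled version you wrote gives a strictly weaker constant away from $s=0$. Second, after arriving at $\eps^{\eta+s}\log\Pr\sim-D\chi^{\ssup{\rm c}}(G)^{\eta+1}(t\eps^{\eta+1})^{s}$ you invoke ``Legendre stationarity'' to fix $c_\ast=\lim t\eps^{\eta+1}$ and declare the remaining computation routine. It is not: the right-hand side of the corollary forces $c_\ast^s=(1-s)^{1-s}(\eta+s)^{\eta+s}/(D\eta^{\eta+1})$, while the Chebyshev saddle gives $c_\ast=(D\eta\chi^{\ssup{\rm c}}(G)^{\eta+1})^{1/(1-s)}$, and these do not coincide for $s\neq 0$. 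You need to spell out precisely which $t$--$\eps$ coupling the statement refers to and then verify that the constant actually drops out as claimed; as written, the last step is unsubstantiated, and this is exactly the point de Bruijn's theorem cannot resolve automatically because $\lambda^a(B_t)$ is not a fixed random variable.
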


Certainly, from Theorem~\ref{thm:ldp-growing-bad}, one can deduce an analogous statement also in the case $\eta=d/2$, but our precision in the case $\eta<d/2$ is not high enough for deriving Lifshitz tails.

The proof of Corollary~\ref{Cor-Lifshitz} is a variant of the proof of \eqref{LifshitzB} in \cite{KSW11}. It uses the fact that
$$
\log\big\langle\e^{t\lambda^a(\alpha_t G\cap\Z^d)}\big\rangle \sim \log\big\langle\P^a_0\big(\supp(\ell_t)\subset\alpha_tG\big)\big\rangle,\qquad t\to\infty,
$$
which is easy to show by standard arguments (also using that we indeed prove the upper bound in \eqref{eqn:dvg-ldp-upperLarge} for any starting point uniformly). Using now the asymptotics from \eqref{eqn:nonexit-growing} and applying de Bruijn's exponential Tauberian theorem \cite[Theorem 4.12.9]{BGT89} yields the assertion.

\subsection{A quenched LDP for uniformly elliptic conductances}\label{sec-quenchedLDP}

To contrast with the main topic of the present paper (the annealed setting for conductances whose essential infimum is zero) we give now a result in the quenched setting (i.e., with probability one with respect to the conductances) for conductances that are bounded and bounded away from zero, in which case the environment is called {\it uniformly elliptic}. Again, we consider an open bounded set $G$ that contains the origin and a scale function $\alpha_t\gg1$ and consider the RWRC in the growing box $B_t=\alpha_t G \cap\Z^d$. In this case, the conductances cannot have any tendency to assume extreme values, but will form a more or less homogeneous environment, and the random walk will behave qualitatively like in the LDP of \cite{GKS07} (mentioned around \eqref{eqn:rate-function-brownian}) in this homogenised environment.  Accordingly, we will be using techniques from the theory of stochastic homogenisation, and we will rely on a quenched functional central limit theorem. The latter states that the RWRC, 
rescaled in the standard way as in Donsker's invariance principle, converges in probability towards a Brownian motion with covariance matrix $c_\eff\Id$, see \cite{A+12}, e.g. The constant $c_\eff>0$ is called \emph{effective diffusion constant} or \emph{effective conductivity} and depends in a rather complex way upon the conductance distribution. 

For simplicity, we restrict to the case where $G$ is a cube.

\begin{theorem}[Quenched LDP for uniformly elliptic conductances]\label{thm:ldp-quenched}
Assume that $\lambda\le a_{xy}\le\frac1\lambda$ almost surely, for some $\lambda\in(0,1)$. Moreover, assume that $G=(0,1)^d$ is the open unit cube. Then, $\Pr$-almost surely, the rescaled local times $L_t$ under $\P_0^a\big(\cdot\vert\supp(\ell_t)\subset \alpha_t G\big)$ satisfy a large deviation principle on $\Fcal$ with scale $t\alpha_t^{-2}$ and rate function $c_\eff I^{\ssup{\rm  c}}_0$ defined in \eqref{eqn:rate-function-brownian}. 
\end{theorem}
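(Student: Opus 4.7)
The natural route is the Feynman-Kac / spectral approach to the LDP, with the classical Donsker-Varadhan Laplacian replaced by the homogenised operator arising from the random conductances. For $V\in C_b(\overline G,\R)$, the Feynman-Kac identity
\[
\E^a_0\!\Big[\e^{\frac t{\alpha_t^2}\langle V,L_t\rangle}\1\{\supp(\ell_t)\subset\alpha_tG\}\Big]
=\E^a_0\!\Big[\e^{\int_0^t\alpha_t^{-2}V(X_s/\alpha_t)\,\d s}\1\{X_{[0,t]}\subset B_t\}\Big]
\]
is controlled at exponential order by $\e^{t\mu_t(V)}$, where $\mu_t(V)$ denotes the top Dirichlet eigenvalue of $\Delta^a+\alpha_t^{-2}V(\,\cdot/\alpha_t)$ on $B_t=\alpha_tG\cap\Z^d$. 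The heart of the argument is the quenched \emph{spectral homogenisation}
\[
\alpha_t^2\mu_t(V)\;\longrightarrow\;\mu^{V,G}:=\sup\Big\{\langle V,f^2\rangle-c_\eff\|\nabla f\|_2^2\,\colon\,f\in H_0^1(G),\,\|f\|_2=1\Big\}\qquad\Pr\text{-a.s.,}
\]
together with $L^2(G)$-convergence of the associated principal eigenfunctions after the natural rescaling.

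\textbf{Homogenisation of the Dirichlet forms.} I would consider the rescaled Dirichlet forms on $L^2(G)$ built from $-\Delta^a$ restricted to $B_t$, with counting measure weighted by $\alpha_t^{-d}$. Using the quenched functional CLT of \cite{A+12} together with the ergodic theorem applied to the i.i.d.\ conductance field, the aim is to establish Mosco convergence of these forms to $c_\eff\|\nabla\cdot\|_2^2$ on $H_0^1(G)$. Uniform ellipticity supplies Delmotte/Barlow Gaussian heat-kernel bounds for the VSRW, which provide compactness of the rescaled semigroups in $L^2(G)$, and hence convergence of eigenvalues and of eigenprojections. Since $G=(0,1)^d$ has regular boundary, the Dirichlet condition on $\partial B_t$ passes cleanly to the Dirichlet condition on $\partial G$ without further correction.

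\textbf{From homogenisation to the conditional LDP.} A standard eigenvalue-expansion argument, together with the fact that $0\in G$ lies in the interior so that the limiting principal eigenfunction is non-zero at the starting point, upgrades the spectral convergence to
\[
\lim_{t\to\infty}\frac{\alpha_t^2}{t}\log\E^a_0\!\Big[\e^{\frac t{\alpha_t^2}\langle V,L_t\rangle}\1\{\supp(\ell_t)\subset\alpha_tG\}\Big]=\mu^{V,G}\qquad\Pr\text{-a.s.}
\]
The map $V\mapsto\mu^{V,G}$ is precisely the Legendre transform (on $\Fcal$) of the convex, lower semicontinuous, coercive functional $c_\eff I^{\ssup{\rm c}}$. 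A Baldi / G\"artner-Ellis inversion therefore yields the unconditional LDP with rate function $c_\eff I^{\ssup{\rm c}}$, once exponential tightness of $L_t$ on $\Fcal$ has been checked --- which follows from $\supp L_t\subset\overline G$ combined with the $V\equiv 0$ case of the identity above. Dividing by the non-exit probability $\P^a_0(\supp(\ell_t)\subset\alpha_tG)$, whose logarithmic asymptotics coincide with $-c_\eff\lambda_G$ on scale $t\alpha_t^{-2}$ by the same identity with $V\equiv 0$, turns this into the claimed conditional LDP with rate function $c_\eff I^{\ssup{\rm c}}_0$.

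\textbf{Main obstacle.} The technical heart lies in the spectral homogenisation step. The quenched invariance principle on its own gives only weak convergence of path laws on compact time intervals, whereas principal Dirichlet eigenvalues and eigenfunctions are sensitive to the generator up to the boundary of $B_t$ and must be controlled in a topology strong enough to survive the nonlinear pairing $\langle V,f_t^2\rangle$. Bridging this gap requires combining Mosco convergence of the rescaled Dirichlet forms (for the abstract variational lower bound) with uniform equicontinuity of the discrete eigenfunctions provided by the quenched parabolic Harnack inequality, together with a convergence-of-exit-times argument ensuring that the Dirichlet condition is correctly imposed in the limit. The uniform ellipticity assumption and the regularity of $\partial G$ (a cube) are used crucially at precisely this point.
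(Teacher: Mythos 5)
Your plan coincides with the paper's overall strategy: Feynman-Kac plus G\"artner-Ellis, reducing to a quenched spectral homogenisation of the rescaled operator $-\alpha_t^2\Delta^a+V_t$ on $B_t=\alpha_tG\cap\Z^d$ followed by an eigenvalue expansion and a polynomial lower bound on the principal eigenfunction at the origin. The genuine divergence is how you propose to obtain the spectral homogenisation. The paper leans on an \emph{existing} result of Boivin--Depauw~\cite{BD03} (based on Kesavan's method for homogenisation of elliptic eigenvalue problems), which delivers, $\Pr$-a.s., convergence of all Dirichlet eigenvalues and $L^2$-convergence of the rescaled eigenfunctions for the RWRC generator on the growing cube; the paper's only new work there is the extension from $V\equiv 0$ to a bounded continuous potential (Theorem~\ref{thm:spectral-homogenisation}, built on a restated intermediate Lemma~\ref{lem:spectral-homogenisation}). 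You instead propose to \emph{construct} the homogenisation from scratch via Mosco convergence of the rescaled Dirichlet forms, using the quenched invariance principle of~\cite{A+12}, the ergodic theorem, Gaussian heat-kernel bounds, and the parabolic Harnack inequality for equicontinuity of discrete eigenfunctions. This route is viable but is a larger project: Mosco/G-convergence of the Dirichlet forms together with compactness of the rescaled semigroups and preservation of the Dirichlet boundary condition on the growing box is essentially the content of~\cite{BD03}, so you would be re-proving the key input rather than invoking it. Your approach buys generality (it does not hinge on the specific statement of~\cite{BD03}, which the paper itself notes has so far been proved only for i.i.d.~conductances) at the cost of significantly more work; the paper's approach buys brevity.

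Two smaller points to be careful about. First, $L^2(G)$-convergence of the eigenfunctions does \emph{not} by itself control $v_t(0)$ pointwise, and the step ``the limiting principal eigenfunction is non-zero at the starting point'' must therefore be supplemented: either by the equicontinuity/Harnack argument you sketch in the ``Main obstacle'' paragraph (yielding uniform convergence, hence pointwise control), or, as the paper does, by a direct heat-kernel lower bound from~\cite{BD10} giving $\log v_t(0)\geq -O(\alpha_t^2\log\alpha_t)$, which is negligible on the scale $t\alpha_t^{-2}$. Second, watch the normalisation: the paper's limiting operator is $-\tfrac{c_\eff}{2}\Delta+V$, while your $\mu^{V,G}$ is written with $c_\eff\|\nabla f\|_2^2$; you should check that your convention for the generator and for $c_\eff$ produces the rate function $c_\eff I^{\ssup{\rm c}}_0$ as claimed, since a factor of $\tfrac12$ is easy to misplace between the variable-speed generator, the quadratic form and the covariance of the limiting Brownian motion.
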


We will prove this theorem in Section~\ref{sec:localtimes-quenched}. The proof relies on a spectral homogenisation result from \cite{BD03}, which states that the eigenvalues and eigenfunctions of the rescaled discrete random Laplacian on $B_t=\alpha_t G \cap\Z^d$ behave on the large scale like those of  the continuous counterpart $c_\eff\Delta$ on $G$. We mention that this assertion has been proved only for i.i.d.~conductances yet.

\subsection{Relevance for the parabolic Anderson model}\label{sec:intro-pam}

\noindent As we mentioned above, one of our main motivations for the present study stems from the interest in understanding the \emph{parabolic Anderson model (PAM)} with additional randomness in the diffusivity given by random conductances. The usual PAM is the solution to the heat equation on $\Z^d$ with random potential, see \cite{GK04} and \cite{KW13} and the references therein for more background. Consider $u\colon[0,\infty]\times \Z^d$ solving the Cauchy problem
\begin{align*}
\begin{cases}
\frac{\partial}{\partial t}u(t,z)=\Delta u(t,z)+\xi(z)u(t,z),&(t,z)\in[0,\infty]\times \Z^d,\\
u(0,z)=\delta_0(z),&z\in\Z^d,
\end{cases}
\end{align*}
where $\xi=(\xi(z))_{z\in\Z^d}$ is a real-valued random potential. For simplicity, we assume that $\xi$ is an i.i.d.~collection of random variables. The solution $u$ can be represented in terms of the Feynman-Kac formula as an expectation over a continuous-time simple random walk with generator $\Delta$. Its total mass $U(t)=\sum_{z\in\Z^d}u(t,z)$ can then be written as
$$
U(t)=\E_0\Big(\exp\Big\{\sum_{z\in\Z^d} \xi(z)\ell_t(z)\Big\}\Big).
$$
From here, one can already suspect that one of the keys in understanding, or at least proving, the large-$t$ behaviour would be a good control on the large deviations of the local times of the walks, and in many research papers this indeed turned out to be decisive. This gets even more convincing when we look at the expectation of $U(t)$ with respect to $\xi$, which equals, as one can see from an elementary calculation,
\begin{equation}\label{Uident}
\E_0\Big(\exp\Big\{\sum_{z\in\Z^d} H(\ell_t(z))\Big\}\Big),
\end{equation}
where $H(\ell)=\log {\tt E}(\e^{\ell \xi(0)})$ denotes the logarithm of the moment generating function. Since $H$ is a convex function, this term has a self-attracting effect on the random walk, hence the description of the large-$t$ behaviour requires a deep understanding of the asymptotic behaviour of the local times in boxes on length scales that are much smaller than the scale of the central limit theorem, i.e., having radii $\ll  \sqrt t$. The size of the relevant box depends on the large-$\ell$ asymptotics of $H(\ell)$. An example is the case where $\xi(0)$ has double exponential tails, where the relevant box turns out not to depend on $t$ \cite{GM98}. For bounded potentials, it has a radius that diverges like a power $\leq 1/(d+2)$ of $t$ \cite{BK01}. 

It is of interest to introduce randomness also in the diffusivity, i.e., to replace the Laplace operator $\Delta$ by the randomised one, $\Delta^a$, and the study of this model is our future goal. From the above, it is clear that all we have to do for identifying the expected total mass is to replace $\E_0$ in \eqref{Uident} by $\E_0^a$, i.e., the simple random walk by the RWRC. Hence, the large-deviation principles of the present paper will be an indispensable help for this future task.

\subsection{Heuristic derivation of Theorem~\ref{thm:ldp-growing-good}}\label{sec:heur} 

\noindent Let us present a heuristic derivation of the LDP of Theorem \ref{thm:ldp-growing-good}, will serve also as an outline for the proof of the lower bound in Theorem \ref{thm:ldp-growing-good}, and it introduces some notation that will be frequently used later. Let us fix any $\eta\in(0,\infty)$; the following does not depend on whether $\eta$ is smaller or larger than $d/2$. We intend to find the asymptotics for the annealed probability of the event $\{L_t\approx f^2\}$ for any $f^2\in\Fcal$, and we keep in mind that this event is to be interpreted as $\{L_t\approx f^2,\supp(\ell_t)\subset\alpha_t G\}$.

The main idea is to find the conductance profile contributing optimally to the probability of the event, and to apply an LDP for the local times given this particular conductance profile. As opposed to the finite region case, the optimal realisation of conductances will depend on time. Let us therefore consider the rescaled conductance field 
\begin{equation}\label{omegatdef}
a_t(y,e)=\beta_t a(\lfloor\alpha_t y\rfloor,e),\qquad e\in\Ncal, y\in G,
\end{equation}
and the scale function $\beta_t\gg 1$ will be chosen along the way (recall our convention $a(z,e)=a_{z,z+e}$ for $z\in\Z^d$ and $e\in\Ncal$). We consider the event that $a_t$ resembles a given function $\varphi\colon G\times\Ncal\to(0,\infty)$, i.e., we approximate
\begin{equation}\label{eqn:ldp-growing-heur0}
\langle\P^a(L_t\approx f^2)\rangle\approx\langle\P^a(L_t\approx f^2)\1\{a_t\approx \varphi\mbox{ on }G\times \Ncal\}\rangle
\end{equation}
for some optimal conductance shape $\varphi$. Let us first calculate the exponential decay rate of the probability of $\{a_t\approx \varphi\mbox{ on }G\times \Ncal\}$. Based on Assumption \ref{assumption:tails}, we obtain
\begin{align}\label{eqn:ldp-growing-heur1}
\log\Pr(a_t\approx \varphi\mbox{ on }G\times\Ncal)
&\approx\log\Big(\prod_{e\in\Ncal}\prod_{z\in\alpha_tG\cap\Z^d}\Pr\big(a(z,e)\approx\beta_t^{-1}\varphi(z/\alpha_t,e)\big)\notag\Big)\\
&\approx-D\beta_t^\eta\sum_{e\in\Ncal}\sum_{z\in\alpha_tG\cap\Z^d}\varphi(z/\alpha_t,e)^{-\eta}\notag\\
&\approx-\beta_t^\eta\alpha_t^d\sum_{e\in\Ncal}D\int_G\varphi(y,e)^{-\eta}\,\d y.
\end{align}
(We will present a more rigorous version of this in Lemma \ref{lem:lbound-environment}.) On the other hand, we may evaluate the $\P^a$-probability of $\{L_t\approx f^2\}$  on the event $\{a_t\approx \varphi\mbox{ on }G\times \Ncal\}$ in terms of a rescaled version of the famous Donsker-Varadhan-G\"artner large deviation principle. In analogy with the large deviation principle for $L_t$ mentioned in Section \ref{sec:intro-rcm} for the simple random walk case,
\begin{equation}\label{eqn:ldp-growing-heur2}
\P^{a}(L_t\approx f^2)\approx \exp\Big(-\frac{t}{\alpha_t^2\beta_t}\sum_{e\in\Ncal}\int_G\varphi(y,e)\big(\partial_ef\big)^2(y)\,\d y\Big)
\end{equation}
on the event where $\{a_t\approx \varphi\mbox{ on }G\times \Ncal\}$. This is well-aligned with the rate function given in \eqref{eqn:rate-function-brownian}, and Proposition~\ref{lem:rescaled-fix-ldp} in Section \ref{sec:ldp-growing-prelim} gives an account of this in a more rigorous way. Combining the approximations in \eqref{eqn:ldp-growing-heur1} and \eqref{eqn:ldp-growing-heur2} with \eqref{eqn:ldp-growing-heur0}, we obtain
\begin{equation}\label{eqn:ldp-growing-heur3}
\log\langle\P^a(L_t\approx f^2)\rangle\approx -\frac{t}{\alpha_t^2\beta_t}\sum_{e\in\Ncal}\int_G\varphi(y,e)\big(\partial_ef\big)^2(y)\,\d y - \beta_t^\eta\alpha_t^d\sum_{e\in\Ncal}D\int_G\varphi(y,e)^{-\eta}\,\d y.
\end{equation}
The decay rate on the right-hand side is minimal if we choose $\beta_t$ such that 
\begin{equation}\label{betadef}
\frac{t}{\alpha_t^2\beta_t}=\beta_t^\eta\alpha_t^d,\qquad\mbox{i.e.,}\qquad \beta_t= \Big(\frac t{\alpha_t^{d+2}}\Big)^{\frac 1{1+\eta}}.
\end{equation} 
Note that the condition $\alpha_t\ll t^{\frac 1{d+2}}$ from Theorem \ref{thm:ldp-growing-good} ensures that $\beta_t\gg 1$. Furthermore, the optimal scale is now seen to be equal to
\begin{equation}\label{gammatdef}
\gamma_t=\frac{t}{\alpha_t^2\beta_t}=\beta_t^\eta\alpha_t^d=t^{\frac\eta{1+\eta}}\alpha_t^{\frac{d-2\eta}{1+\eta}}.
\end{equation}
The optimal shape $\varphi$ is determined by the minimisation of the sum of the two integrals on the right-hand side of \eqref{eqn:ldp-growing-heur2}. Minimizing term by term, we see that
$$
\varphi(y,e)=\arg\inf\Big\{r\big(\partial_ef(y)\big)^2+Dr^{-\eta}\,\colon\,r\in[0,\infty]\Big\},
$$
which yields
$$
\varphi(y,e)\big(\partial_ef(y)\big)^2+D\varphi(y,e)^{-\eta}=K_{\eta,D}\big|\partial_ef(y)\big|^{p},\qquad y\in G,e\in\Ncal,
$$
with $K_{\eta,D}$ as in Theorem~\ref{thm:ldp-growing-good} and $p=\frac{2\eta}{\eta+1}$. In particular, we have identified the rate function $J^{\ssup{\rm c}}$ of \eqref{ratefct} as 
\begin{equation}\label{Jcident}
J^{\ssup{\rm c}}(f^2)=\inf_{\varphi\colon G\times\Ncal\to(0,\infty)}\Big[\sum_{e\in\Ncal}\int_G\varphi(y,e)\big(\partial_ef\big)^2(y)\,\d y +\sum_{e\in\Ncal}D\int_G\varphi(y,e)^{-\eta}\,\d y\Big].
\end{equation}
This ends our heuristic explanation of the LDP in Theorem~\ref{thm:ldp-growing-good}.

\subsection{Open problems}\label{sec-OpenProblems}

The present work leaves open a number of interesting questions, both on the analytic and the probabilistic side. It is open whether or not the rate functions $J^{\ssup {\rm c}}$ and $J^{\ssup {\rm d}}$ are linked with some interesting operator on its own right, like the pseudo-$p$-Laplacian. See \cite{BK04} for the study of a problem that is closely related with the analysis of the minimiser(s) of $J^{\ssup {\rm c}}$. Another question concerns the precise behaviour of the minimisers of the formula for $\chi^{\ssup {\rm d}}(B)$ for $B\uparrow\Z^d$ in the three cases $\eta<d/2$, $\eta=d/2$ and $\eta>d/2$: do we have convergent subsequences, and does a continuous or a discrete picture arise? On the probabilistic side, it would be interesting to find methods to determine the asymptotic shape of the local times conditional on staying in $\alpha_t G$ for $\eta\leq d/2$, where we expect a discrete picture to arise. Furthermore, the methods of the present paper are not strong enough to rigorously identify the behaviour of the conductances under the annealed law, conditional on the walk not leaving the set $\alpha_t G$; also this is interesting. Moreover, the quenched setting (i.e., with probability one with respect to the conductances) is rather interesting as well; is it true that a similar picture as for the PAM arises: the random walk quickly moves to a remote small region in which the conductances create a particularly preferable environment? And lastly, of course the model that gave the main motivation of this paper remains to investigated, the PAM with diffusivity taken equal to the RWRC.

\section{The characteristic variational problems}\label{sec:varprobs}

\noindent In this section, we prove Proposition~\ref{prop:varprobs}. It follows from a couple of lemmas that we are going to state and prove. All results of this section are self-contained and do not need any probabilistic input. Nevertheless, the proof of the upper bound in Theorem~\ref{thm:ldp-growing-good} also relies on some of the results presented in this section.

%
Let us state, for future reference, a form of the Rellich-Kondrashov theorem, which  the reader may find in \cite[Theorem 8.9]{LL01}, for instance.

\begin{theorem}[Rellich-Kondrashov]\label{thm:rellich-kondrashov}
Let $1\leq p\leq\infty$ and $f,f_1,f_2,\ldots\in W_0^{1,p}(G)$ such that $f_n\to f$ weakly. Then
\begin{itemize}
\item[i)] If $p<d$, then $\Vert f_n-f\Vert_q\to 0$ for all $q$ with $1\leq q<\frac{dp}{d-p}$.

\item[ii)] If $p=d$, then $\Vert f_n-f\Vert_q\to 0$ for all $q\in(0,\infty)$.

\item[iii)] If $p>d$, then $\Vert f_n-f\Vert_\infty\to 0$.

\end{itemize}
\end{theorem}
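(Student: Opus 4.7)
The plan is to prove all three cases by the standard mollification/diagonal compactness argument, combined with Sobolev embedding and interpolation. Since weak convergence in a Banach space implies boundedness, $\|f_n\|_{W^{1,p}}\le M$ for some constant $M$. Extending each $f_n$ by zero outside $G$ (legitimate precisely because $f_n\in W_0^{1,p}(G)$), I may treat the whole sequence as elements of $W^{1,p}(\R^d)$ with support in a common compact set $K\supset\overline G$, without any regularity hypothesis on $\partial G$.

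The core quantitative input is a mollification estimate. Let $\rho_\eps$ be a standard nonnegative radial mollifier supported in $B_\eps(0)$. Writing $f_n(x)-(f_n\ast\rho_\eps)(x)$ via the fundamental theorem of calculus along segments of length at most $\eps$ and applying Minkowski's integral inequality in $L^p$ yields
\begin{equation*}
\|f_n - f_n\ast\rho_\eps\|_{L^p(\R^d)} \le \eps\,\|\nabla f_n\|_{L^p} \le M\eps.
\end{equation*}
For each fixed $\eps$, the smoothed family $\{f_n\ast\rho_\eps\}_n$ is uniformly bounded and equicontinuous on $K$ (via Young's inequality applied both to $f_n\ast\rho_\eps$ and to its gradient $f_n\ast\nabla\rho_\eps$). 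Arzelà--Ascoli then produces a uniformly convergent subsequence, and a diagonal extraction over $\eps_k=1/k$ yields a single subsequence $f_{n_j}$ that is Cauchy in $L^p(G)$. The limit must equal $f$ by the weak-convergence hypothesis; since every subsequence admits a further subsequence converging to $f$ in $L^p(G)$, the whole sequence converges in $L^p(G)$.

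To upgrade to $L^q$ convergence in case~(i), I would combine the above with the continuous Sobolev embedding $\|f_n\|_{p^*}\le C\|f_n\|_{W^{1,p}}$ for $p^\ast=dp/(d-p)$ and interpolate: for every $q\in[p,p^\ast)$ there is $\theta\in(0,1]$ with
\begin{equation*}
\|f_n-f\|_q \le \|f_n-f\|_p^{\theta}\,\|f_n-f\|_{p^\ast}^{1-\theta},
\end{equation*}
where the second factor is bounded by $2CM$ and the first vanishes; the range $q<p$ is immediate from Hölder on the bounded set $G$. Case~(ii) is the same argument with the endpoint embedding $W^{1,d}\hookrightarrow L^q$ for every $q<\infty$ replacing $L^{p^\ast}$. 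Case~(iii) is of a different flavour: Morrey's inequality gives a uniform bound on $\{f_n\}$ in $C^{0,1-d/p}(\overline G)$, so Arzelà--Ascoli applied directly in $C^0(\overline G)$ produces a uniformly convergent subsequence whose limit must again be $f$. The main technical point throughout is that the mollification estimate requires extending the $f_n$ to $\R^d$ without inflating the $W^{1,p}$-norm, which is exactly what the hypothesis $f_n\in W_0^{1,p}(G)$ provides through zero-extension.
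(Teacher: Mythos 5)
The paper does not prove this theorem: it is quoted as a known result with the reference \cite[Theorem~8.9]{LL01}, so there is no in-paper proof for you to match against. Your argument is the classical mollification/Arzel\`a--Ascoli compactness proof (translation estimate $\Vert f_n-f_n\ast\rho_\eps\Vert_p\le\eps\Vert\nabla f_n\Vert_p$, equicontinuity of the mollified family via Young, diagonal extraction, then upgrade from $L^p$ to $L^q$ by interpolating against the bounded $L^{p^\ast}$ (resp.\ $L^q$, resp.\ $C^{0,\alpha}$) norm furnished by the Sobolev/Morrey embeddings), and it is correct; in particular you rightly observe that the hypothesis $f_n\in W_0^{1,p}(G)$ is what licenses the zero-extension to $\R^d$ and hence removes any need for boundary regularity of $G$, which matters in this paper since $G$ is only assumed open, bounded, connected with \lq sufficiently regular\rq\ boundary. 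The one step left tacit, and worth spelling out, is that the translation estimate is first proved for $\Ccal_c^\infty$ functions and then extended to $W_0^{1,p}$ by density; also, for $p=\infty$ one should read \lq weak convergence\rq\ as weak-$\ast$, though this case plays no role in the paper.
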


\begin{lemma}\label{lem:varprob-cont-solvable}
If $\eta>d/2$ (in dimension $d=1$, assume in addition that $\frac{2\eta}{\eta+1}\geq 1$), then the continuous variational problem in \eqref{eqn:varprob-continuousAcal} for $\Acal=\Fcal$ has a minimiser.
\end{lemma}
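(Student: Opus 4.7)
The plan is to use the direct method of the calculus of variations. Set $p=\frac{2\eta}{\eta+1}$; by hypothesis $\eta>d/2$, so
\[
p=\frac{2\eta}{\eta+1}>\frac{2d}{d+2},
\]
and the side condition ensures $p\ge 1$ in every dimension. Let $(f_n)_{n\in\N}\subset H_0^1(G)$ be a minimising sequence with $\|f_n\|_2=1$ and $\sum_{i=1}^d\|\partial_i f_n\|_p^p\to \chi^{\ssup{\rm c}}(G)$. First I would check that $(f_n)$ is bounded in $W^{1,p}_0(G)$: the gradient part is bounded by construction, and since $p\le 2$ and $G$ is bounded, H\"older gives $\|f_n\|_p\le |G|^{(2-p)/(2p)}\|f_n\|_2=|G|^{(2-p)/(2p)}$.

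Second, I would extract a subsequence (not relabelled) with $f_n\rightharpoonup f$ weakly in $W^{1,p}_0(G)$. When $p>1$ this is immediate from reflexivity; when $p=1$ (which only occurs in the excluded-by-hypothesis corner $d=1,\eta=1$) one uses the fact that $W^{1,1}_0((a,b))$ embeds continuously into $C_0([a,b])$ with bounded sequences being equicontinuous, so Arzel\`a--Ascoli and a standard Dunford--Pettis / de la Vall\'ee-Poussin argument still yield weak convergence of the derivatives along a subsequence. This is the main obstacle, but it is confined to a single borderline case.

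Third, by Theorem~\ref{thm:rellich-kondrashov} the embedding $W^{1,p}_0(G)\hookrightarrow L^2(G)$ is compact: indeed, if $p<d$ the condition $p>\frac{2d}{d+2}$ is equivalent to $\frac{dp}{d-p}>2$, so $f_n\to f$ in $L^q$ for all $q<\frac{dp}{d-p}$, in particular in $L^2$; if $p=d$ we have $L^q$-convergence for every $q<\infty$; and if $p>d$ we even get uniform convergence. In every case $\|f\|_2=\lim_n\|f_n\|_2=1$, so $f^2\in\Fcal$ is admissible.

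Finally, since $p\ge 1$ the functional $g\mapsto \int_G|g|^p\,\d y$ is convex and continuous on $L^p(G)$, hence weakly lower semicontinuous. Applying this componentwise to $\partial_if_n\rightharpoonup\partial_if$ in $L^p(G)$ and summing over $i\in\{1,\dots,d\}$ gives
\[
\sum_{i=1}^d\int_G\bigl|\partial_i f(y)\bigr|^p\,\d y\ \le\ \liminf_{n\to\infty}\sum_{i=1}^d\int_G\bigl|\partial_i f_n(y)\bigr|^p\,\d y\ =\ \chi^{\ssup{\rm c}}(G),
\]
so $f$ attains the infimum. The structural point is that the two sides of the threshold $\eta=d/2$ correspond exactly to whether or not $W^{1,p}_0(G)$ compactly embeds into $L^2(G)$, which is why the complementary assertion in Proposition~\ref{prop:varprobs}(ii) must fail in the regime $\eta\le d/2$.
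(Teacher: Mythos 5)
Your argument follows the paper's exactly: direct method, boundedness of the minimising sequence in $W_0^{1,p}(G)$, weak compactness, Rellich--Kondrashov to upgrade to strong $L^2$-convergence, and weak lower semicontinuity of the $L^p$-seminorm of the gradient. Your boundedness computation uses H\"older directly rather than the paper's split into $\{f_n>1\}$ and $\{f_n\le1\}$, but that is a cosmetic difference.

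There is one genuine issue, confined to the corner $p=1$ (equivalently $d=1$, $\eta=1$). You claim this case is ``excluded-by-hypothesis'', but it is not: the hypothesis in $d=1$ is $\frac{2\eta}{\eta+1}\geq 1$, i.e.\ $\eta\geq 1$, which includes $\eta=1$. The patch you then offer does not hold up. A sequence bounded in $W_0^{1,1}((a,b))$ is uniformly bounded (and of uniformly bounded variation) but it is \emph{not} equicontinuous---take mollified step functions whose transition width shrinks to zero---so Arzel\`a--Ascoli does not apply; and Dunford--Pettis/de la Vall\'ee-Poussin requires uniform integrability of the derivatives, which is strictly stronger than $L^1$-boundedness. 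What one actually gets for $p=1$ is a weak-$*$ limit in $BV$, with the limiting derivative a priori only a measure, and one would need a further argument to place the limit in $W_0^{1,1}$. (To be fair, the paper's own proof also passes over this: it asserts weak convergence in $W^{1,p}_0$ as a consequence of boundedness, which is automatic only for $p>1$ by reflexivity.) For all $d\geq 2$, and for $d=1$ with $\eta>1$, one has $p>1$ and your argument is complete and matches the paper's.
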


\begin{proof}
Put $p=\frac{2\eta}{\eta+1}<2$ and choose a sequence $(f_n)_{n\in\N}$ in $H_0^1(G)$ with $\Vert f_n\Vert_2=1$ for all $n\in\N$ that satisfies 
$$
\lim_{n\to\infty}\sum_{e\in\Ncal}\|\partial_e f_n\|_p^p=\chi^{\ssup{\rm c}}(G).
$$ 
Clearly, the $p$-norms of all derivatives $\partial_ef_n$ with $e\in\Ncal$ must be bounded as the sequence approximates the infimum. In addition, we may estimate 
$$
\Vert f_n\Vert_p^p=\Vert f_n\1_{\{f_n>1\}}\Vert_p^p+\Vert f_n\1_{\{f_n\leq 1\}}\Vert_p^p\leq \Vert f_n\Vert_2 +\vert G\vert=1+\vert G\vert,
$$ 
which means that the sequence $(f_n)_n$ is bounded in $W^{1,p}$. Consequently, we may assume that it converges weakly towards some $f\in W^{1,p}$. We now have to check the conditions in the Rellich-Kondrashov theorem above (with the choice $q=2$) to establish strong convergence of $f_n$ in $L^2(G)$.

\emph{Case $d\geq 2$:} On the one hand, $p>\frac{2d}{d+2}$, so in particular $p\geq 1$. On the other, we have $p<2\leq d$. In order to use Theorem \ref{thm:rellich-kondrashov} i) with $q=2$, we just estimate $$\frac{dp}{d-p}=\frac{2d\eta}{d\eta+d-2\eta}>2.$$

\emph{Case $d=1$:} By the additional assumption, $p\geq 1$. Therefore, we may either use Theorem \ref{thm:rellich-kondrashov} ii) or iii).

We have now shown that $f_n\to f$ strongly in $L^2(G)$ and in particular $\Vert f\Vert_2=1$. As $\partial_e f_n\to \partial_ef$ weakly for all $e\in\Ncal$ and $L^p$-norms are lower semicontinuous with regard to the weak topology (see e.g.~\cite{LL01}, Section 2.11), we have that
$\sum_{e\in\Ncal}\Vert\partial_e f\Vert_p^p\leq\sum_{e\in\Ncal}\liminf_{n\to\infty}\Vert\partial_e f_n\Vert_p^p$, i.e., $f$ is a minimiser. This finishes the proof of Lemma \ref{lem:varprob-cont-solvable}.
\end{proof}

\begin{remark} The case $d=1$ and $p=\frac{2\eta}{\eta+1}<1$ is not accessible to the techniques above as the map $f\mapsto (\int_G f^p)^{1/p}$ is not even a seminorm if $p<1$. 
\end{remark}

In the following, we write $|\cdot|_r$ for the standard $r$-norm on $\R^d$.

\begin{lemma}\label{lem:varprob-cont-not-solvable}
If $\eta\leq d/2$, then $\chi^{\ssup{\rm c}}(G)=0$ and the continuous variational problem in \eqref{eqn:varprob-continuousAcal} for $\Acal=\Fcal$ does not have a minimiser.
\end{lemma}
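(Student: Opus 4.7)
The plan is to exhibit an explicit minimising sequence driving the functional $f\mapsto \sum_{i=1}^d\|\partial_i f\|_p^p$ (with $p=\tfrac{2\eta}{\eta+1}\in(0,2)$) to zero, by exploiting the unfavourable scaling of this functional relative to the $L^2$-normalisation when $\eta<d/2$. The construction is then turned into a non-existence argument for minimisers.

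The first step is to pick a bump function $\phi\in C_c^\infty(G)$ with $\|\phi\|_2=1$ and $\supp(\phi)\subset B(0,r)\subset G$; this is possible because $G$ is open and contains the origin. I then consider the $L^2$-preserving dilations $\phi_\lambda(x)=\lambda^{d/2}\phi(\lambda x)$ for $\lambda\geq 1$. A change of variables shows $\|\phi_\lambda\|_2=1$ and $\supp(\phi_\lambda)\subset B(0,r/\lambda)\subset G$, so $\phi_\lambda\in H_0^1(G)$ is admissible for the infimum. The second step is to compute $\|\partial_i\phi_\lambda\|_p^p=\lambda^{p(d/2+1)-d}\|\partial_i\phi\|_p^p$, whose exponent simplifies to $(2\eta-d)/(\eta+1)$ once $p=2\eta/(\eta+1)$ is substituted. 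For $\eta<d/2$ this exponent is strictly negative, so sending $\lambda\to\infty$ yields $\sum_i\|\partial_i\phi_\lambda\|_p^p\to 0$, proving $\chi^{\ssup{\rm c}}(G)=0$. The third step (non-existence of a minimiser) is then immediate: any admissible $f$ with vanishing functional would satisfy $\nabla f=0$ a.e., hence would be constant on the connected set $G$, and the zero boundary trace would force $f\equiv 0$, contradicting $\|f\|_2=1$.

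The main obstacle is the borderline case $\eta=d/2$, where the scaling exponent $(2\eta-d)/(\eta+1)$ vanishes and dilations leave the functional invariant, so the naive argument produces only an upper bound $\chi^{\ssup{\rm c}}(G)\leq\sum_i\|\partial_i\phi\|_p^p$. In fact, at this critical exponent the Sobolev embedding $W^{1,p}_0(G)\hookrightarrow L^{p^*}(G)$ with $p^*=2$ already gives a reverse lower bound $\chi^{\ssup{\rm c}}(G)\geq C^{-p}>0$, so the scaling plan cannot be pushed through at this endpoint. Handling $\eta=d/2$ consistently with the stated lemma would require either a more delicate construction or a reinterpretation of the statement as asserting only non-attainment of the infimum (a standard concentration-compactness phenomenon: the critical Sobolev extremals on $\R^d$ concentrate at a point and cannot live in $H_0^1$ of a bounded domain).
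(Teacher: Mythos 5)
Your dilation argument for the open range $\eta<d/2$ is correct and is cleaner than what the paper does. The paper builds, separately for $d=1$ and $d\geq 2$, explicit singular profiles (a spike $A_r(\eps_0-|x|)^r$ in one dimension and a rescaled $(|x|^{-2\gamma}-\eps^{-2\gamma})^{1/2}\mathbf 1_{\{|x|<\eps\}}$ in higher dimensions) and drives a singularity parameter to its extreme; your $L^2$-preserving dilation $\phi_\lambda(x)=\lambda^{d/2}\phi(\lambda x)$ of a single smooth compactly supported bump handles all dimensions at once, stays manifestly inside $H_0^1(G)$, and makes the decay exponent $(2\eta-d)/(\eta+1)$ emerge transparently. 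The closing non-existence paragraph (zero energy forces $\nabla f=0$ a.e., whence $f\equiv 0$ by the zero trace) is also sound.

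Your worry about the endpoint $\eta=d/2$ is well placed and in fact decisive rather than incidental. At $p=\tfrac{2d}{d+2}$ the functional is scale-invariant and neither your dilations nor the paper's profiles decay. For $d\geq 2$ one has $p\in[1,2)$, and the critical Sobolev inequality $\|f\|_2\leq C\|\nabla f\|_p$ holds for $f\in H_0^1(G)\subset W_0^{1,p}(G)$ extended by zero; combining this with the pointwise bound $\bigl(\sum_i a_i^2\bigr)^{p/2}\leq\sum_i a_i^p$ (valid for $p<2$) gives $\sum_i\|\partial_if\|_p^p\geq\|\nabla f\|_p^p\geq C^{-p}>0$ for every admissible $f$, hence $\chi^{\ssup{\rm c}}(G)>0$. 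So for $d\geq 2$ the lemma's conclusion fails at $\eta=d/2$, and your Sobolev observation is a proof of that failure, not merely an obstruction to your method. The paper's own $d\geq 2$ argument does not escape this: its profiles $f_\eps$ are not in $H_0^1(G)$ (the integral $\int|\nabla f_\eps|^2$ diverges both near the origin and near $|x|=\eps$), and the exponent bookkeeping has a slip ($A_\eps^{p/2}$ where $A_\eps^{p}$ is needed) which, once repaired, gives the $\eps$-power $(d-2\eta)/(\eta+1)$, vanishing exactly at $\eta=d/2$. Dimension $d=1$ is genuinely different: there $p=\tfrac23<1$ lies outside the Sobolev range, and flat-topped bumps with increasingly steep sides, e.g.\ $f_\alpha(x)=h_\alpha\bigl(1-(|x|/w)^\alpha\bigr)$ for $|x|<w$ with $h_\alpha^2 w$ normalised, yield $\|f_\alpha'\|_{2/3}^{2/3}\asymp\alpha^{-1/3}\to 0$ as $\alpha\to\infty$, so $\chi^{\ssup{\rm c}}(G)=0$ does hold in that one case. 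In summary, your proof establishes the lemma on $\eta<d/2$, and the residual gap at $\eta=d/2$ reflects, for $d\geq 2$, a defect of the statement rather than of your approach.
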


\begin{proof}
It will be sufficient to show that $\chi^{\ssup{\rm c}}(G)=0$. Pick $\eps_0>0$ such that the open ball with radius $\eps_0$ around the origin is contained in $G$. The proof is separated into the cases $d=1$ and $d\geq 2$.

\emph{Case} $d=1$:
Here, we have $p\geq 2d/(d+2)=2/3$. For $r>0$, define $f_r(x)=A_r(\eps_0-\vert x\vert)^r \1_{\{\vert x\vert<\eps_0\}}$ with $A_r^2=\frac{2r+1}{2\eps_0^{2r+1}}$. We easily check that $f_r\in H_0^1(G)$, $\Vert f_r\Vert_2=1$ and $\vert f'(x)\vert=rA_r(\eps_0-\vert x\vert)^{r-1}\1_{\{\vert x\vert<\eps_0\}}$. Then,
$$
\int_G\vert f'(x)\vert^p\,\d x=2r^pA_r^p\frac 1{pr-p+1}\eps_0^{pr-p+1}\leq C r^p r^{p/2} \eps_0^{-pr} r^{-1}\eps_0^{pr}=Cr^{\frac{3p}2 -1}
$$
for some constant $C>0$. As the last term obviously vanishes for $r\to\infty$, the assertion is shown in the case $d=1$.

\emph{Case} $d\geq 2$:
We construct a family $(f_\eps)_{\eps\in(0,\eps_0)}$ of functions in $H_0^1(G)$ with $\Vert f_\eps\Vert_2=1$ and $\sum_e\|\partial_e f_\eps\|_p^p\to 0$ as $\eps\to 0$, where we recall that $p=\frac{2\eta}{1+\eta}$. Choose some $\gamma\in(d/4,d/2)$ and put 
$$
\tilde f_\eps(x)=\big(\vert x\vert_2^{-2\gamma}-\eps^{-2\gamma}\big)^{1/2}\1_{\{\vert x\vert_2<\eps\}},
$$ 
to obtain
\begin{equation*}
\Vert \tilde f\Vert_2^2=d\Omega_d\int_0^\eps [r^{-2\gamma}-\eps^{-2\gamma}] r^{d-1}\,\d r =C_1\eps^{d-2\gamma},
\end{equation*}
where $\Omega_d$ denotes the volume of the unit ball in $\R^d$, $C_1$ is some appropriate constant, and the existence of the integral above follows from $\gamma<d/2$. Choosing $A_\eps^2=C_1^{-1}\eps^{2\gamma-d}$, the functions $f_\eps=A_\eps\tilde f_\eps$ are $L^2(G)$-normed. Moreover, for $x\in\R^d$ with $\vert x\vert_2<\eps$,
\begin{align*}
\vert\nabla f_\eps(x)\vert_2^2&=\sum_{i=1}^d\Big\vert\frac\partial{\partial x_i}\Big[A_\eps\big(\vert x\vert_2^{-2\gamma}-\eps^{-2\gamma}\big)^{1/2}\Big]\Big\vert^2\\
&=A_\eps^2\sum_{i=1}^d\Big\vert\frac 1 2\big(\vert x\vert_2^{-2\gamma}-\eps^{-2\gamma}\big)^{-1/2}\cdot \gamma\vert x\vert_2^{-2\gamma-2}\cdot 2x_i\Big\vert^2\\
&=A_\eps^2\gamma^2\big(\vert x\vert_2^{-2\gamma}-\eps^{-2\gamma}\big)^{-1}\vert x\vert_2^{-4\gamma-4}\sum_{i=1}^d\vert x_i\vert^2\\
&=A_\eps^2\gamma^2\frac{\vert x\vert_2^{-4\gamma-2}}{\vert x\vert_2^{-2\gamma}-\eps^{-2\gamma}}.
\end{align*}
We may estimate the $p$-norm $|\cdot|_p$ on $\R^d$ against a constant $C_2$ times the $2$-norm $|\cdot|_2$ and get that
\begin{equation}\label{eqn-ContVarProbNotSolvable0}
\int_G\vert \nabla f_\eps(x)\vert_p^p\,\d x\leq C_2 \int_G\vert \nabla f_\eps(x)\vert_2^p\,\d x.
\end{equation}
We calculate the integral on the right as
\begin{equation}\label{eqn-ContVarProbNotSolvable1}
\begin{aligned}
\int_G\vert \nabla f_\eps(x)\vert_2^p\,\d x
&=A_\eps^{p/2}\gamma^{p/2}\int_0^\eps\Big(\frac{r^{-4\gamma-2}}{r^{-2\gamma}-\eps^{-2\gamma}}\Big)^{p/2}r^{d-1}\,\d r\\
&=A_\eps^{p/2}\gamma^{p/2}\big(\eps^{-2\gamma-2}\big)^{p/2}\eps^d\int_0^1\Big(\frac{s^{-4\gamma-2}}{s^{-2\gamma}-1}\Big)^{p/2}s^{d-1}\,\d s.
\end{aligned}
\end{equation}
The integral in the last term is obviously finite if, for some $\delta>0$, 
\begin{equation}\label{eqn-ContVarProbNotSolvable2}
\int_0^{\delta} s^{-p\gamma-p+d-1}\,\d s<\infty\qquad\text{and}\qquad\int_{1-\delta}^1\frac 1 {s^{-2\gamma}-1}\,\d s<\infty.
\end{equation}
As $p\leq 2d/(d+2)$ by assumption, it follows $(d-p)/p\geq d/2>\gamma$, which means the exponent in the first integral in \eqref{eqn-ContVarProbNotSolvable2} is greater than $-1$ and that integral is finite. For the second integral in \eqref{eqn-ContVarProbNotSolvable2}, we substitute $r=s^{-2\gamma}-1$ and estimate
\begin{align*}
\int_{1-\delta}^1\frac 1 {s^{-2\gamma}-1}\,\d s&=\frac{1}{2\gamma}\int_{1-\delta}^1r^{-1}(r+1)^{\frac{1 -2\gamma}{2\gamma}}\,\d r\leq\frac{1}{\gamma}\int_{1-\delta}^1r^{\frac{1-4\gamma}{2\gamma}}\,\d r,
\end{align*}
which is finite as $\gamma>d/4\geq 1/2$. Thus, with some constant $C_3>0$, \eqref{eqn-ContVarProbNotSolvable0} and \eqref{eqn-ContVarProbNotSolvable1} yield
$$
J^{\ssup{\rm c}}(f_\eps^2)\leq C_3 \eps^{(\gamma-d/2)p/2}\eps^{-p\gamma-p}\eps^d=C_3 \eps^{(-2p\gamma-pd-4p+4d)/4}.
$$
The assertion of Lemma \ref{lem:varprob-cont-not-solvable} follows if $-2p\gamma-pd-4p+4d>0$. This is again satisfied as $\gamma<d/2$ and $p\leq 2d/(d+2)$.
\end{proof}

Let us in the following consider the discrete variational problem. In the next statement, we write $Q_n=[-n,n]^d\cap \Z^d$ for the discrete cube of side length $2n+1$.
\begin{lemma}\label{lem:varprob-disc-not-solvable}
If $d=1$ or $\eta>d/2$, then $\chi^{\ssup{\rm d}}(Q_n)\to 0$ as $n\to\infty$. In particular, $\chi^{\ssup{\rm d}}(\Z^d)=0$.
\end{lemma}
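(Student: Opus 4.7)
The plan is to exhibit, for each large $n$, a test function $g_n$ in the admissible class of~\eqref{eqn:varprob-discrete} (with $B = Q_n$) whose discrete $p$-Dirichlet energy, for $p = \frac{2\eta}{\eta+1}$, tends to zero. The final statement $\chi^{\ssup{\rm d}}(\Z^d) = 0$ then follows from the trivial monotonicity $\chi^{\ssup{\rm d}}(\Z^d) \le \chi^{\ssup{\rm d}}(Q_n)$, since enlarging the support set can only enlarge the admissible class and hence lower the infimum.

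In dimension $d = 1$ a flat indicator already suffices: set $g_n(z) = n^{-1/2} \1_{\{0, \ldots, n-1\}}(z)$. Then $g_n$ is $\ell^2$-normalised, supported in $Q_n$, and its discrete gradient vanishes except at the two boundary edges $z = -1$ and $z = n - 1$, each contributing $n^{-1/2}$ in absolute value. The energy is therefore $2 n^{-p/2}$, which tends to zero for any $p > 0$, with no constraint on $\eta$.

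For $d \ge 2$ one needs a smoother profile, because the flat indicator produces energy of order $n^{d-1} \cdot n^{-dp/2}$, and the condition $p > 2 - 2/d$ needed to kill this is strictly stronger than $p > \frac{2d}{d+2}$ (equivalently $\eta > d/2$). Fix instead a nonzero $f \in C_c^\infty(\R^d)$ supported in $(-1,1)^d$ and set
\[
g_n(z) = C_n\, n^{-d/2}\, f(z/n), \qquad C_n = \Big(n^{-d}\sum_{z \in \Z^d} f(z/n)^2\Big)^{-1/2},
\]
so that $\|g_n\|_2 = 1$, $\supp g_n \subset Q_n$ for all large $n$, and $C_n \to \|f\|_2^{-1}$ by the standard Riemann-sum approximation. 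The mean-value theorem yields
\[
g_n(z+e) - g_n(z) = C_n\, n^{-d/2-1}\, \partial_e f(\zeta_z)
\]
for some $\zeta_z$ at distance at most $1/n$ from $z/n$, so uniform continuity of $\partial_e f$ together with a second Riemann-sum identification gives
\[
\sum_{z \in \Z^d}\big|g_n(z+e) - g_n(z)\big|^p = C_n^p\, n^{d - p(d+2)/2}\Big(\int_{\R^d}\big|\partial_e f(x)\big|^p\,dx + o(1)\Big).
\]
The assumption $\eta > d/2$ is exactly $d - p(d+2)/2 < 0$, so each term in the sum over $e \in \Ncal$ vanishes as $n \to \infty$, producing the desired upper bound on $\chi^{\ssup{\rm d}}(Q_n)$.

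The only point needing a bit of care is that, when $p < 1$ (possible in our regime only if $\eta < 1$), none of the usual triangle-type inequalities is available. This is why the discrete gradient must be estimated pointwise via the mean value theorem rather than through any norm-level bound, and why the Riemann-sum passage is performed directly on $|g_n(z+e) - g_n(z)|^p$ rather than on $|g_n(z+e) - g_n(z)|$.
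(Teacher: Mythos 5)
Your proof is correct, and it achieves the same goal as the paper's by the same basic strategy: discretize a fixed smooth compactly supported function on the mesh $\Z^d/n$ and show that the resulting test function in the variational formula has $\ell^p$-Dirichlet energy of order $n^{(d-2\eta)/(\eta+1)} = n^{d-p(d+2)/2} \to 0$. The technical route differs. The paper takes the $L^2$-averaged discretization $g^{\ssup n}(z) = [n^{-d}\int_{[0,1]^d} g^2((z+y)/n)\,\d y]^{1/2}$ (automatically $\ell^2$-normed), then passes to the continuum via the reverse Minkowski inequality in $L^2([0,1]^d)$, the power-mean (Jensen) inequality to pull the sum through the concave power $\eta/(\eta+1)<1$, a change of variables, and the fundamental theorem of calculus. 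Your proof instead evaluates $f$ pointwise at the lattice with an explicit renormalising constant $C_n$, uses the mean value theorem to turn the first-order difference into $\partial_e f$ at a nearby point, and reads the resulting sum directly as a Riemann sum for $\int|\partial_e f|^p$. This is slightly more elementary, it avoids the power-mean step entirely, and it yields the sharp asymptotic constant rather than a bound up to an unspecified multiplicative factor. Both arguments deliberately avoid any $\ell^p$ triangle inequality, which would indeed fail for $p<1$; the paper via the $L^2$/Jensen detour, yours via the pointwise MVT estimate. Your final remark on this point is apt.

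One small inaccuracy in your motivating discussion: for $d=2$ the thresholds $2-2/d$ and $\frac{2d}{d+2}$ coincide (both equal $1$), so the flat indicator would actually also work there whenever $\eta>d/2$; the condition $p>2-2/d$ is strictly stronger only for $d\ge 3$. This does not affect the proof itself, which uses the smooth test function uniformly for all $d\ge 2$ and hence is correct.
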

\begin{proof}
The case $d=1$ is straightforward. We just consider the sequence of functions $f_n=n^{-1/2}\1_{[-n,n]}$. Then, up to a constant that arises from norming,
$$
\chi^{\ssup{\rm d}}(Q_n)\leq\sum_{z\in\Z}\vert f_n(z+1)-f_n(z)\vert^{\frac{2\eta}{\eta+1}}=2n^{-\frac{\eta}{\eta+1}}
$$ 
and we are done. In the case $\eta>d/2$, a more careful argument works in all dimensions. For some fixed and $L^2(G)$-normed $g\in \Ccal_c^1((-1,1)^d)$ (i.e., $g$ possesses continuous partial derivatives and has compact support), define the discretisations $$g^{\ssup n}(z)=\Big[n^{-d}\int_{[0,1]^d}g^2\Big(\frac{z+y}{n}\Big)\,\d y\Big]^{1/2},\qquad z\in\Z^d.$$ These are normed and, at least for large $n$, supported on $Q_n$. Therefore
\begin{equation}\label{eqn:disc-rescaling-vanish}
\chi^{\ssup{\rm d}}(Q_n)\leq \sum_{e\in\Ncal}\sum_{z\in\Z^d}\vert g^{\ssup n}(z+e)-g^{\ssup n}(z)\vert^{\frac{2\eta}{\eta+1}}.
\end{equation}
By H\"older's and Jensen's inequalities, we find 
\begin{equation}\label{eqn:disc-rescaling-vanish1}
\begin{aligned}
\sum_{e\in\Ncal}\sum_{z\in\Z^d} 
\vert g^{\ssup n}(z+e)-g^{\ssup n}(z)\vert^{\frac{2\eta}{\eta+1}}
&\leq n^{-\frac{d\eta}{\eta+1}}\sum_{\substack{z\in\Z^d\\e\in\Ncal}} 
\Big[\int_{[0,1]^d} \Big\vert g\Big(\frac{z+y+e}{n}\Big)-g\Big(\frac{z+y}{n}\Big)\Big\vert^2\,\d y\Big]^{\frac{\eta}{\eta+1}}\\
&\lesssim \Big[\sum_ {e\in \Ncal}\sum_{z\in\Z^d}\int_{[0,1]^d} \Big\vert g\Big(\frac{z+y+e}{n}\Big)-g\Big(\frac{z+y}{n}\Big)\Big\vert^2\,\d y\Big]^{\frac{\eta}{\eta+1}}\\
&=n^{\frac{d}{\eta+1}}\Big[\sum_{e\in\Ncal}\int_{\R^d}\Big\vert g\Big(y+\frac{e}{n}\Big)-g(y)\Big\vert^2\,\d y\Big]^{\frac{\eta}{\eta+1}}.
\end{aligned}
\end{equation}
Replacing the difference under the last integral according to the fundamental theorem of calculus, we see that
$$
\mbox{r.h.s.~of \eqref{eqn:disc-rescaling-vanish1}}=n^{\frac{d-2\eta}{\eta+1}}\Big[\sum_{i=1}^d\int_{\R^d}\int_0^1\Big\vert\partial_ig\Big(y+\frac{se_i}{n}\Big)\Big\vert^2\,\d s\,\d y\Big]^{\frac{\eta}{\eta+1}}
=n^{\frac{d-2\eta}{\eta+1}}\Big[\sum_{i=1}^d\int_{\R^d}\vert\partial_ig(y)\vert^2\,\d y\Big]^{\frac{\eta}{\eta+1}},
$$
where the term in parentheses is obviously finite. This shows that the right-hand side in \eqref{eqn:disc-rescaling-vanish} tends to $0$ as $n\to\infty$ and thus completes the proof of Lemma~\ref{lem:varprob-disc-not-solvable}.
\end{proof}

\begin{lemma}\label{lem:varprob-disc-solvable}
If $d>1$ and $\eta\leq d/2$, then $\chi^{\ssup{\rm d}}(\Z^d)>0$.
\end{lemma}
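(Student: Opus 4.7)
The plan is to derive, from a discrete Sobolev inequality on $\Z^d$ (valid for $d\ge 2$), a uniform positive lower bound on the gradient sum in \eqref{eqn:varprob-discrete} over all $g\in\ell^2(\Z^d)$ with $\Vert g\Vert_2=1$. Write $p=\frac{2\eta}{\eta+1}$; the hypothesis $\eta\le d/2$ is equivalent to $p\le \frac{2d}{d+2}$, which (for $p\in[1,d)$) is in turn equivalent to the Sobolev conjugate exponent $p^{\ast}:=\frac{dp}{d-p}$ satisfying $p^{\ast}\le 2$.

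The key analytic input is the classical discrete Sobolev inequality on $\Z^d$: for each $q\in[1,d)$ there is a constant $C_{d,q}>0$ such that
\[
\Vert g\Vert_{q^{\ast}}\le C_{d,q}\,\Bigl(\sum_{e\in\Ncal}\sum_{z\in\Z^d}\bigl|g(z+e)-g(z)\bigr|^q\Bigr)^{1/q},\qquad g\in\ell^q(\Z^d),
\]
which follows from the $\Z^d$-isoperimetric inequality $|\partial A|\gtrsim|A|^{(d-1)/d}$ via a co-area/layer-cake argument for $q=1$, and is extended to $q>1$ by applying the $q=1$ case to $|g|^{q(d-1)/(d-q)}$. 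I shall combine this with the elementary monotonicity $\Vert g\Vert_s\le\Vert g\Vert_r$ for $r\le s$ on counting measure. Under the standing hypothesis $p\le 2d/(d+2)$, applying Sobolev at $q=p\vee 1$ yields $q^{\ast}\le 2$ and hence the clean bound $\Vert g\Vert_2\le\Vert g\Vert_{q^{\ast}}\le C_{d,q}\Vert\nabla g\Vert_q$, writing $\Vert\nabla g\Vert_q^q:=\sum_{e,z}|g(z+e)-g(z)|^q$.

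With this in hand the argument splits. If $p\ge 1$, take $q=p$ directly to obtain $\Vert\nabla g\Vert_p^p\ge C_{d,p}^{-p}>0$ uniformly over admissible $g$. If $p<1$ (which occurs whenever $d=2$, and for $d\ge 3$ when $\eta$ is small), $\Vert\nabla g\Vert_p$ fails to be a seminorm; here the remedy is the subadditivity of $x\mapsto x^p$ on $[0,\infty)$, which gives
\[
\sum_{e,z}\bigl|g(z+e)-g(z)\bigr|^p\ \ge\ \Bigl(\sum_{e,z}\bigl|g(z+e)-g(z)\bigr|\Bigr)^p=\Vert\nabla g\Vert_1^p,
\]
whereupon the previous step at $q=1$ (available because $1^{\ast}=d/(d-1)\le 2$ for $d\ge 2$) finishes the job. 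A standard truncation $g_n:=g\,\1_{Q_n}$ together with Fatou reduces Sobolev to its usual finitely supported form, and then taking the infimum over $g\in\ell^2(\Z^d)$ with $\Vert g\Vert_2=1$ gives $\chi^{\ssup{\rm d}}(\Z^d)>0$.

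There is no serious obstacle here: the proof is essentially an assembly of the discrete Sobolev inequality with the monotonicity of $\ell^r$-norms on counting measure, plus the concavity trick in the subunit regime. The hypothesis $d>1$ enters precisely through $d/(d-1)\le 2$, which is what allows $\Vert g\Vert_2$ to be controlled by the $\ell^1$-norm of the gradient; this control is exactly what fails in $d=1$, in agreement with Lemma~\ref{lem:varprob-disc-not-solvable}.
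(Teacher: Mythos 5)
Your proof is correct and rests on the same analytic backbone as the paper's, namely the discrete Sobolev inequality on $\Z^d$, but the two arguments are packaged differently enough to be worth contrasting. The paper first invokes the monotonicity of $\chi^{\ssup{\rm d}}(\Z^d)$ in $\eta$ (true because one may restrict the infimum to $g\ge 0$, whence all first differences lie in $[0,1]$) to reduce to the single value $\eta=d/2$, i.e.\ $p=\frac{2d}{d+2}$; it then handles $d=2$ (where $p=1$) directly from the $\ell^1$-Sobolev inequality, and for $d\ge 3$ derives the needed $q=p$ bound \emph{on the fly} from the $q=1$ case by applying \eqref{eqn-discreteSobolev} to $g=f^\alpha$ with $\alpha=\frac{2d-2}{d}$ and then using the mean value theorem plus H\"older. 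You instead skip the reduction entirely, work with general $p=\frac{2\eta}{\eta+1}\le\frac{2d}{d+2}$, and quote the discrete $W^{1,q}\hookrightarrow\ell^{q^\ast}$ embedding as a known fact for $q\in[1,d)$; your parenthetical "apply $q=1$ to $|g|^{q(d-1)/(d-q)}$'' is exactly the paper's $g=f^\alpha$ trick in disguise (indeed your exponent reduces to $\frac{2d-2}{d}$ at $p=\frac{2d}{d+2}$). Because you do not reduce to $\eta=d/2$, you must also cover the subunit regime $p<1$ (which occurs for $\eta<1$), and your use of the concavity/subadditivity of $x\mapsto x^p$ to lower-bound $\sum|\nabla g|^p$ by $\|\nabla g\|_1^p$ handles this correctly. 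The net effect is a proof that avoids the paper's (unproven though elementary) monotonicity step at the cost of an extra case distinction; both are sound, and the combination of $\|g\|_2\le\|g\|_{q^\ast}$ (monotonicity of $\ell^r$-norms under counting measure, valid since $q^\ast\le 2$) with the discrete Sobolev embedding is precisely where the hypothesis $d>1$ enters in both accounts.
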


\begin{proof}
As $\chi^{\ssup{\rm d}}(\Z^d)$ is non-increasing with $\eta$, it suffices to consider the case $\eta=d/2$ and we abbreviate $p=\smfrac{2\eta}{\eta+1}=\smfrac{2d}{d+2}$. We prove the case $d=2$ and $d\geq 3$ separately. 

The proofs rely on a discrete Sobolev inequality the reader may find in \cite[Lemma 3.2.10]{S10}, see also \cite{KS12}. It states that in dimension $d\geq 2$, we have for all $g\colon\Z^d\to[0,\infty)$ with $g(z)\to 0$ as $\vert z\vert\to\infty$
\begin{equation}\label{eqn-discreteSobolev}
\sum_{z\in\Z^d}g(z)^{\frac d{d-1}}\leq\Big(\sum_{z\in\Z^d,e\in\Ncal}\vert g(z+e)-g(z)\vert\Big)^{\frac d{d-1}}.
\end{equation}

\emph{Case} $d=2$:
Here, $p=1$ and $\frac{d}{d-1}=2$. It follows directly from \eqref{eqn-discreteSobolev} that $\sum_{z,e}\vert f(z+e)-f(z)\vert\geq 1$ for all normed functions $f\in \ell^2(\Z^2)$. This shows the assertion.

\emph{Case} $d\geq 3$:
Take an arbitrary normed function $f\in \ell^2(\Z^d)$. Without loss of generality, we may assume that $f$ is non-negative. Put $\alpha=\frac{2d-2}d>1$, consider \eqref{eqn-discreteSobolev} with $g=f^\alpha$ and apply the mean value theorem to each summand. It follows

\begin{equation*}
1\leq\sum_{z\in\Z^d,e\in\Ncal}\vert f(z+e)^\alpha-f(z)^\alpha\vert\leq \sum_{z\in\Z^d,e\in\Ncal}\alpha\vert f(z+e)-f(z)\vert(f(z+e)^{\alpha-1}+f(z)^{\alpha-1}),
\end{equation*}
which in combination with H\"older's inequality yields
\begin{equation*}
1\leq 2d\alpha\Big(\sum_{z\in\Z^d,e\in\Ncal}\vert f(z+e)-f(z)\vert^p\Big)^{\frac 1p}\Big(\sum_{z\in\Z^d}f(z)^{\frac{(\alpha-1)p}{p-1}}\Big)^{\frac{p-1}p}.
\end{equation*}
The second sum is equal to $1$ as $\frac{(\alpha-1)p}{p-1}=2$ due to the choices $p=\smfrac{2d}{d+2}$ and $\alpha=\frac{2d-2}d$. Rearrangement of the equation above yields the desired result.
\end{proof}

\begin{lemma}\label{lem:disc-varprob-converge}
Assume $\eta\geq\frac d2$. Consider, for $n\in\N$, the boxes $Q_n=[-n,n]^d\cap\Z^d$. Then
\begin{equation}
\lim_{n\to\infty}\chi^{\ssup{\rm d}}(Q_n)=\chi^{\ssup{\rm d}}(\Z^d).
\end{equation}
\end{lemma}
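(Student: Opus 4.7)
The plan: First observe that $n\mapsto\chi^{\ssup{\rm d}}(Q_n)$ is non-increasing, since any function admissible for $Q_n$ is admissible for $Q_{n+1}$, and likewise $\chi^{\ssup{\rm d}}(Q_n)\geq\chi^{\ssup{\rm d}}(\Z^d)$. Hence the limit exists and dominates $\chi^{\ssup{\rm d}}(\Z^d)$, and the task reduces to a matching upper bound. If $\eta>d/2$ or $d=1$, Lemma~\ref{lem:varprob-disc-not-solvable} already gives $\chi^{\ssup{\rm d}}(Q_n)\to 0$, so both sides vanish. The only genuinely non-trivial case is therefore $d\geq 2$, $\eta=d/2$, for which $p:=\frac{2\eta}{\eta+1}=\frac{2d}{d+2}\in[1,2)$.

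For this remaining case the plan is truncation and renormalisation of an almost-optimal $g$. Fix $g\in\ell^2(\Z^d)$ with $\|g\|_2=1$ and finite $I(g):=\sum_{z,e}|g(z+e)-g(z)|^p$. Introduce a discrete Lipschitz cutoff $\chi_n\colon\Z^d\to[0,1]$ with $\chi_n\equiv 1$ on $Q_n$, $\chi_n\equiv 0$ off $Q_{2n}$ and $|\chi_n(z+e)-\chi_n(z)|\leq C/n$, and set $g_n:=g\chi_n/\|g\chi_n\|_2$, which is admissible for $\chi^{\ssup{\rm d}}(Q_{2n})$. Writing
$$g_n(z+e)-g_n(z)=\frac{1}{\|g\chi_n\|_2}\Big[(g(z+e)-g(z))\chi_n(z+e)+g(z)(\chi_n(z+e)-\chi_n(z))\Big]$$
and applying the elementary inequality $(a+b)^p\leq(1+\varepsilon)a^p+C_\varepsilon b^p$ (valid for any $p>0$, $\varepsilon>0$) splits the energy of $g_n$ into a bulk part bounded by $(1+\varepsilon)I(g)/\|g\chi_n\|_2^p$ and a boundary part proportional to $C_\varepsilon\sum_{z,e}|g(z)|^p|\chi_n(z+e)-\chi_n(z)|^p$.

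The crux is controlling the boundary part. It is at most $Cn^{-p}\sum_{z\in Q_{2n}\setminus Q_n}|g(z)|^p$, and H\"older's inequality (with exponent $2/p$) further bounds it by $Cn^{-p+d(1-p/2)}\|g\mathbf{1}_{Q_{2n}\setminus Q_n}\|_2^{\,p}$. A direct arithmetic check shows that the exponent $-p+d(1-p/2)$ equals $0$ exactly when $p=\frac{2d}{d+2}$, i.e.\ at the critical value $\eta=d/2$. Thus the boundary term collapses to $C\|g\mathbf{1}_{Q_{2n}\setminus Q_n}\|_2^{\,p}$, which tends to $0$ by dominated convergence because $g\in\ell^2$. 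Since also $\|g\chi_n\|_2\to 1$, we obtain $\limsup_n\chi^{\ssup{\rm d}}(Q_{2n})\leq(1+\varepsilon)I(g)$. Letting $\varepsilon\downarrow 0$ and taking the infimum over $g$ yields $\limsup_n\chi^{\ssup{\rm d}}(Q_n)\leq\chi^{\ssup{\rm d}}(\Z^d)$, which completes the proof when combined with the monotonicity lower bound.

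The main obstacle is exactly the critical scale-matching at $\eta=d/2$: the cutoff gradient scale $1/n$, the shell volume $n^d$ and the Sobolev-type exponent $p=\frac{2d}{d+2}$ conspire to produce a boundary contribution of exactly constant order, which then degrades to $0$ using only the $\ell^2$ tail of $g$. For any $\eta<d/2$ the exponent $-p+d(1-p/2)$ would be strictly positive, the boundary term would explode, and indeed the statement would have to fail in view of $\chi^{\ssup{\rm d}}(\Z^d)>0$ from Lemma~\ref{lem:varprob-disc-solvable}.
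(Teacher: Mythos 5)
Your proof is correct and follows essentially the same route as the paper: a cutoff with gradient of order $1/n$, a split of the discrete energy into a bulk term and a boundary term, and H\"older's inequality to show the boundary term is bounded by a constant times $n^{d(1-p/2)-p}$, which is nonpositive exactly when $\eta\ge d/2$. The only (cosmetic) differences are that you first reduce to the critical case $\eta=d/2$, $d\ge 2$ via Lemma~\ref{lem:varprob-disc-not-solvable}, and you use the unified inequality $(a+b)^p\le(1+\varepsilon)a^p+C_\varepsilon b^p$ in place of the paper's case split into Minkowski (for $p>1$) and subadditivity (for $p\le 1$).
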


\begin{proof}
As obviously $\chi^{\ssup{\rm d}}(Q_n)\geq \chi^{\ssup{\rm d}}(\Z^d)$, it remains to show that
\begin{equation}\label{eqn:disc-varprob-converge-limsup}
\limsup_{n\to\infty}\chi^{\ssup{\rm d}}(Q_n)\leq\chi^{\ssup{\rm d}}(\Z^d).
\end{equation}
To that end, write $p=\frac{2\eta}{\eta+1}$ and choose some arbitrarily small $\delta>0$. Then there exists some normed $g\in\ell^2(\Z^d)$ such that
\begin{equation}
\sum_{e\in\Ncal}\sum_{z\in\Z^d}\vert g(z+e)-g(z)\vert^p\leq \chi^{\ssup{\rm d}}(\Z^d)+\delta.
\end{equation}
We will now cut off this $g$ in a sufficiently smooth way to obtain an upper bound for $\chi^{\ssup{\rm d}}(Q_n)$. Define $\xi\colon\R^d\to\R$ by
\begin{equation}
\xi(x)=\begin{cases}
1,\qquad&\vert x\vert_2\leq1,\\
2-\vert x\vert_2,\qquad &1<\vert x\vert_2<2,\\
0,\qquad&\vert x\vert_2\geq2.
\end{cases}
\end{equation}
Then, the norm $r(n)$ of $g_n$ defined as $g_n(z)=g(z)\xi(z/n)$, $z\in\Z^d$, obviously tends to $1$ as $n\to\infty$. Moreover, we have in the case $p\leq 1$,
\begin{align*}
\chi^{\ssup{\rm d}}(Q_{2n+1})&\leq r(n)^{-p}\sum_{e\in\Ncal}\sum_{z\in\Z^d}\vert g_n(z+e)-g_n(z)\vert^p\\
&\leq r(n)^{-p}\sum_{e\in\Ncal}\sum_{z\in\Z^d}\vert g(z+e)-g(z)\vert^p\xi\big((z+e)/n\big)^p\\
&\qquad + r(n)^{-p}\sum_{e\in\Ncal}\sum_{z\in\Z^d}\vert g(z)\vert^p\big\vert\xi\big((z+e)/n\big)-\xi(z/n)\big\vert^p.
\end{align*}
In the case $p>1$, we obtain as an analogous estimate by Minkowski's inequality
\begin{align*}
\big(\chi^{\ssup{\rm d}}(Q_{2n+1})\big)^{1/p}&\leq \frac1{r(n)}\Big(\sum_{e\in\Ncal}\sum_{z\in\Z^d}\vert g_n(z+e)-g_n(z)\vert^p\Big)^{1/p}\\
&\leq \frac1{r(n)}\Big(\sum_{e\in\Ncal}\sum_{z\in\Z^d}\vert g(z+e)-g(z)\vert^p\xi\big((z+e)/n\big)^p\Big)^{1/p}\\
&\qquad+ \frac1{r(n)}\Big(\sum_{e\in\Ncal}\sum_{z\in\Z^d}\vert g(z)\vert^p\big\vert\xi\big((z+e)/n\big)-\xi(z/n)\big\vert^p\Big)^{1/p}.
\end{align*}
In both cases, the first term on the right-hand side clearly tends to $\chi^{\ssup{\rm d}}(\Z^d)+\delta$ and its $1/p$-th power, respectively. As $\delta$ was chosen arbitrarily small, it is enough to show that the sums in the respective second terms vanish as $n\to\infty$. For some positive constants $c_1<c_2$ that depend on dimension only, it is obvious that 
\begin{equation}
\vert\xi\big((z+e)/n\big)-\xi(z/n)\vert=0\qquad\text{if } z\notin Q_{\lfloor c_2n\rfloor}\setminus Q_{\lfloor c_1n\rfloor}, e\in\Ncal,
\end{equation}
if $n$ is large enough. Moreover, the same difference is of course always bounded by $n^{-1}$. Therefore, we may estimate, with the help of H\"older's inequality,
$$
\begin{aligned}
\sum_{e\in\Ncal}\sum_{z\in\Z^d}\vert g(z)\vert^p\big\vert\xi\big((z+e)/n\big)-\xi(z/n)\big\vert^p
&\leq  \Big(d\sum_{z\in\Z^d\setminus Q_{\lfloor c_1n\rfloor}}\vert g(z)\vert^2\Big)^{p/2}\Big(d\sum_{z\in Q_{\lfloor c_2n\rfloor}}n^{-\frac{2p}{2-p}}\Big)^{\frac{2-p}2}\\
&\leq c_3\Big(\sum_{z\in\Z^d\setminus Q_{\lfloor c_1n\rfloor}}\vert g(z)\vert^2\Big)^{p/2}\Big(n^{d-\frac{2p}{2-p}}\Big)^{\frac{2-p}2}
\end{aligned}
$$
with a constant $c_3>0$ that also depends on the dimension only. As $g$ was assumed to be $\ell^2$-normed, the assertion follows if only $d-\frac{2p}{2-p}\leq 0$. But this is tantamount to $\eta\geq\frac d2$.
\end{proof}

\section{Auxiliary large deviation statements}\label{sec:ldp-growing-prelim}

In this section, we prove two tools that will be important for the proof of the main results later and have also some interest in their own right: a rescaled LDP of Donsker-Varadhan-G\"artner type with deterministic conductances in Section~\ref{subsec:ldp-rescaled-fix}, and a version of an LDP for the conductances in Section~\ref{subsec:asycond}.

\subsection{Donsker-Varadhan-G\"artner type LDP for deterministically rescaling conductances.}\label{subsec:ldp-rescaled-fix}
In this section, we prove an LDP for the rescaled local times, $L_t$, for a time-dependent sequence of conductances that rescale to some fixed profile. More precisely, for  $\varphi\colon G\times\Ncal\to(0,\infty)$ we define its \lq unscaled\rq\ version by
\begin{equation}\label{varphitdef}
\varphi_t(z,e)=\int_{[0,1]^d}\varphi\Big(\frac{z+y}{\alpha_t},e\Big)\,\d y,\qquad z\in B_t, e\in\Ncal.
\end{equation}
Here, we recall that $B_t=\alpha_tG\cap \Z^d$. The following is an extension of \cite[Lemma 3.1]{GKS07} from $\varphi\equiv 1$ (i.e., simple random walk) to a much larger class of conductances.

\begin{prop}\label{lem:rescaled-fix-ldp} Fix $\varphi\colon G\times\Ncal\to(0,\infty)$ such that $\varphi(\cdot,e)\in \Ccal(G)$ for any $e\in\Ncal$ and such that $m\leq \varphi\leq M$ for some $0<m<M<\infty$. Then the rescaled local times $L_t$ under $\P^{\beta_t^{-1}\varphi_t}$ conditioned on the event $\{\supp(\ell_t)\subset \alpha_t G\}$ satisfy an LDP on $\Fcal$ with scale $t\alpha_t^{-2}\beta_t^{-1}$ and rate function $I^{\ssup{\rm c}}_{\varphi,0}=I^{\ssup{\rm c}}_\varphi-\inf_\Fcal I^{\ssup{\rm c}}_\varphi$ where
\begin{equation}\label{DVratefct}
I^{\ssup{\rm c}}_\varphi(f^2)=\begin{cases}
\sum_{e\in\Ncal}\int_G\varphi(y,e)\big(\partial_e f\big)^2(y)\,\d y,\qquad&\mbox{if }f\in H^1_0(G)\\
\infty,\qquad&\text{else.}
               \end{cases}
\end{equation}
Here, the space $\Fcal$ is equipped with the weak topology of test integrals against bounded continuous functions $V\colon G\to\R$.
\end{prop}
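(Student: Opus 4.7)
My plan is to follow the Donsker-Varadhan-G\"artner route through exponential moments, adapted to the rescaled conductance profile. For any $V\in C_b(\bar G)$ set $V_t(z):=(\alpha_t^2\beta_t)^{-1}V(z/\alpha_t)$. The Feynman-Kac formula yields
\begin{equation*}
\E^{\beta_t^{-1}\varphi_t}_0\Big[\exp\Big(\tfrac{t}{\alpha_t^2\beta_t}\langle V,L_t\rangle\Big)\1\{\supp(\ell_t)\subset B_t\}\Big]
=\bigl(\e^{t(\Delta^{\beta_t^{-1}\varphi_t}+V_t)}\1_{B_t}\bigr)(0),
\end{equation*}
so the long-time behaviour on the scale $t/(\alpha_t^2\beta_t)$ is governed by the top Dirichlet eigenvalue $\lambda_t(V)$ of $\Delta^{\beta_t^{-1}\varphi_t}+V_t$ on $B_t$. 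The central task is therefore to establish the spectral convergence
\begin{equation*}
\alpha_t^2\beta_t\,\lambda_t(V)\;\xrightarrow[t\to\infty]{}\;\Lambda(V):=\sup_{\substack{f\in H_0^1(G)\\ \|f\|_2=1}}\Big[\int_G Vf^2\,\d y-I^{\ssup{\rm c}}_\varphi(f^2)\Big].
\end{equation*}

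The liminf inequality $\liminf_t\alpha_t^2\beta_t\lambda_t(V)\geq\Lambda(V)$ comes from Rayleigh-Ritz with trial functions $g_t(z):=\alpha_t^{-d/2}f(z/\alpha_t)$ for $f\in C_c^\infty(G)$ with $\|f\|_2=1$. Since $\varphi_t$ is a block average of the continuous function $\varphi(\cdot/\alpha_t,\cdot)$, a Riemann-sum computation gives $\alpha_t^2\beta_t\langle V_t,g_t^2\rangle\to\int_G Vf^2\,\d y$ and $\alpha_t^2\beta_t\sum_{z,e}\beta_t^{-1}\varphi_t(z,e)(g_t(z+e)-g_t(z))^2\to I^{\ssup{\rm c}}_\varphi(f^2)$; density of $C_c^\infty(G)$ in $H_0^1(G)$, which uses the regularity of $\partial G$, then yields the claim. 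For the matching limsup inequality I would take the principal Dirichlet eigenfunction $\psi_t$ on $B_t$ with $\|\psi_t\|_2=1$ and consider the rescaling $f_t(y):=\alpha_t^{d/2}\psi_t(\lfloor\alpha_t y\rfloor)$, interpreted as a piecewise constant function on $G$ with $\|f_t\|_{L^2(G)}\to 1$. Using the uniform ellipticity $\varphi\geq m>0$ together with the a priori bound $\sum_{z,e}\beta_t^{-1}\varphi_t(z,e)(\psi_t(z+e)-\psi_t(z))^2=O((\alpha_t^2\beta_t)^{-1})$ (a consequence of the already-proved lower bound on $\lambda_t(V)$), one gets a uniform $H^1$-type bound for $f_t$. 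The Rellich-Kondrashov theorem then extracts a subsequence converging weakly in $H^1$ and strongly in $L^2(G)$ to some $f\in H_0^1(G)$ with $\|f\|_2=1$; the Dirichlet boundary condition passes to the limit because $\supp(\psi_t)\subset B_t=\alpha_t G\cap\Z^d$ and $\partial G$ is regular. Lower semicontinuity of the continuous Dirichlet form under weak $H^1$-convergence delivers the matching inequality.

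Given the spectral convergence, the Feynman-Kac identity (together with the sub-exponential size on our scale of the ground-state overlap with $\delta_0$) yields
\begin{equation*}
\lim_{t\to\infty}\tfrac{\alpha_t^2\beta_t}{t}\log\E^{\beta_t^{-1}\varphi_t}_0\Big[\exp\Big(\tfrac{t}{\alpha_t^2\beta_t}\langle V,L_t\rangle\Big)\,\Big|\,\supp(\ell_t)\subset B_t\Big]=\Lambda(V)-\Lambda(0).
\end{equation*}
Exponential tightness of the conditional laws on $\Fcal$ is automatic because they are supported on probability measures inside the compact set $\bar G$. Bryc's converse to Varadhan's lemma then produces the LDP at the claimed scale, and convex duality (applicable because $I^{\ssup{\rm c}}_\varphi$ is convex and lower semicontinuous on $\Fcal$ by uniform ellipticity of $\varphi$) identifies the Legendre transform of $V\mapsto\Lambda(V)-\Lambda(0)$ as $I^{\ssup{\rm c}}_{\varphi,0}$. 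An alternative route that by-passes duality is to obtain the upper bound for closed sets $\Ccal\subset\Fcal$ by exponential Chebyshev using a well-chosen $V$, and the lower bound for open sets by a ground-state Doob transform of $\Delta^{\beta_t^{-1}\varphi_t}+V_t$ to tilt the walk toward any prescribed profile $f^2$.

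The main obstacle is the compactness half of the spectral convergence: one must convert the discrete $\ell^2$-bound on the gradient of $\psi_t$ into genuine $L^2(G)$-precompactness of $f_t$ and rule out mass escape across the moving boundary $\partial B_t$. Uniform ellipticity $\varphi\geq m>0$ and regularity of $\partial G$ are essential here. The recovery half and the Riemann-sum convergence of $\alpha_t^2\beta_t\langle V_t,g_t^2\rangle$ and of the discrete Dirichlet form are comparatively routine, relying on uniform continuity of $\varphi$ on $\bar G\times\Ncal$ and on the block-averaging construction of $\varphi_t$.
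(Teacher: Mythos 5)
Your overall strategy matches the paper's: reduce to exponential moments via Feynman--Kac, prove two-sided convergence of the discrete principal Dirichlet eigenvalue of $-\alpha_t^2\Delta^{\varphi_t}+V_t$ to the continuous one of $-\Delta^\varphi+V$, control the ground-state overlap with $\delta_0$, and conclude with G\"artner--Ellis. The ``recovery'' direction (upper bound on $\lambda_1^{\ssup t}$ by inserting discretised trial functions $f\in\Ccal_c^\infty(G)$ into Rayleigh--Ritz) is the same as the paper's.

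There is a concrete gap in the other direction. You propose to rescale the discrete principal eigenfunction $\psi_t$ to a \emph{piecewise constant} function $f_t(y)=\alpha_t^{d/2}\psi_t(\lfloor\alpha_t y\rfloor)$, claim a ``uniform $H^1$-type bound'', and then invoke Rellich--Kondrashov to extract a weakly $H^1$-convergent subsequence. But a piecewise constant $f_t$ is \emph{not} in $H^1(G)$ (its distributional derivative is a singular measure on the grid faces), so it is not an element of the space to which Rellich--Kondrashov applies, and the discrete Dirichlet sum $\sum_{z,e}(\psi_t(z+e)-\psi_t(z))^2$ is not the $L^2$-norm of any weak gradient of $f_t$. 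Without this, the lower semicontinuity argument and the passage of the Dirichlet boundary condition to the limit do not go through as written. The paper resolves exactly this point by using a finite-element (piecewise \emph{linear} over simplices) interpolation of $v_t$, which lands in $H_0^1(G)$, has $\partial_e f_t$ piecewise constant and equal to the rescaled discrete differences of $v_t$, and hence has continuous Dirichlet energy \emph{exactly} equal to the discrete one; one then shows the $L^2$-norm of the interpolant converges to $1$ and the potential term converges, which is what makes the compactness and lower semicontinuity arguments legitimate. Replacing your piecewise constant extension by this piecewise linear interpolation would close the gap.

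Two smaller remarks. First, you mention the ground-state overlap with $\delta_0$ only in passing; in the paper this is a separate lemma, treated differently in $d=1$ (via uniqueness and strict positivity of the continuous minimiser together with $L^\infty$-convergence of the interpolants) and in $d\geq 2$ (via a direct random-walk lower bound on $\min_x\P_0^{\alpha_t^2\varphi_t}(X_1=x)$), so it is not automatic and needs an argument. Second, your convex-duality identification of the rate function matches the paper's use of the Rayleigh--Ritz representation $\lambda_1(\varphi,V)=\inf_{f^2\in\Fcal}\{I^{\ssup{\rm c}}_\varphi(f^2)-(V,f^2)\}$; this part of your plan is fine.
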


We follow partly the proof of \cite[Lemma 3.1]{GKS07} and use the G\"artner-Ellis theorem, i.e., we identify the exponential rate of exponential test integrals against bounded continuous functions. However, we cannot rely on the local central limit theorem here, but rather use an eigenvalue expansion. Hence we will have to control the principal eigenvalue and the corresponding eigenfunction. This will be done in Lemmas~\ref{lem:eigenvalues-converge} and \ref{lem-eigenfunction-decay}, respectively, which are the two main steps in the proof of Proposition~\ref{lem:rescaled-fix-ldp}.

For $V$ in $\Ccal_{\rm b}(G)$, the set of bounded continuous functions $G\to\R$, we define its unscaled discretisation analogously to \eqref{varphitdef}:
\begin{equation}\label{eqn:vtdef}
V_t(z)=\int_{[0,1]^d}\d y\,V\Big(\frac{z+y}{\alpha_t}\Big),\quad z\in\alpha_tG\cap\Z^d
\end{equation}
Then, denote by $\lambda^{\ssup t}(\varphi,V)$ the principal (i.e., smallest) eigenvalue of $-\alpha_t^2\Delta^{\varphi_t}+V_t$ in $B_t$ with zero boundary condition. Analogously, we call $\lambda_1(\varphi,V)$ the largest eigenvalue of the continuous operator
$$
-\Delta^\varphi+V=-\nabla^\ast A\nabla+V
$$
on $H_0^1(G)$, where the space-dependent matrix $A$ is given by 
$$
A_{ij}(y)=\delta_{ij}\varphi(y,e_i),\qquad y\in G,\,i,j\in\{1,\ldots,d\}.
$$ 
The Rayleigh-Ritz principle can be written as
$$
\lambda_1(\varphi,V)=\inf_{f\in \Fcal}\{I^{\ssup{\rm c}}_\varphi(f^2)+(Vf,f)\}.
$$ 
It turns out that the discrete eigenvalue converges towards the continuous one if the discrete region grows.

\begin{lemma}\label{lem:eigenvalues-converge}Fix $\varphi$ as in Proposition~\ref{lem:rescaled-fix-ldp}. Then, for any $V\in\Ccal_{\rm b}(G)$, 
$$
\lim_{t\to\infty}\lambda_1^{\ssup t}(\varphi,V)=\lambda_1(\varphi,V).
$$
\end{lemma}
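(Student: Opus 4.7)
The plan is to use the Rayleigh--Ritz characterisations of both eigenvalues and prove matching upper and lower bounds. Writing the discrete Rayleigh quotient as
$$
Q_t(g)=\alpha_t^2\sum_{e\in\Ncal}\sum_{z\in B_t}\varphi_t(z,e)\bigl(g(z+e)-g(z)\bigr)^2+\sum_{z\in B_t}V_t(z)g(z)^2,\qquad \|g\|_{\ell^2(B_t)}=1,
$$
the key observation is that for the rescaled profile $f(x)=\alpha_t^{d/2}g(\lfloor\alpha_t x\rfloor)$, this quadratic form is a Riemann-sum approximation to $\sum_e\int_G\varphi(\cdot,e)(\partial_ef)^2+\int_G Vf^2$, with the factor $\alpha_t^2$ precisely absorbing the mesh scale of the discrete gradient.

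For the \emph{upper bound}, I would take a test function $f\in\Ccal_c^\infty(G)$ with $\|f\|_2=1$ and define $g_t(z)=\alpha_t^{-d/2}f(z/\alpha_t)$, which is supported in $B_t$ for large $t$. Smoothness of $f$ together with the definition of $\varphi_t$ in \eqref{varphitdef} yields, via Taylor expansion of $f$ around $z/\alpha_t$ and Riemann-sum convergence (which uses continuity of $\varphi(\cdot,e)$ on the compact set $\overline G$),
$$
\alpha_t^2\sum_{z,e}\varphi_t(z,e)\bigl(g_t(z+e)-g_t(z)\bigr)^2\longrightarrow\sum_{e\in\Ncal}\int_G\varphi(y,e)\bigl(\partial_ef(y)\bigr)^2\,\d y,
$$
together with $\sum_zV_t(z)g_t(z)^2\to\int_G Vf^2$ and $\|g_t\|_{\ell^2}\to1$. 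Hence $\limsup_t\lambda_1^{\ssup t}(\varphi,V)\le I^{\ssup{\rm c}}_\varphi(f^2)+(Vf,f)$, and density of $\Ccal_c^\infty(G)$ in $H_0^1(G)$ yields $\limsup_t\lambda_1^{\ssup t}(\varphi,V)\le\lambda_1(\varphi,V)$.

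For the \emph{lower bound}, let $g_t$ be the normalised principal eigenfunction on $B_t$. The upper bound plus $-\alpha_t^2\Delta^{\varphi_t}\ge0$ and boundedness of $V$ gives $\alpha_t^2\sum_{z,e}\varphi_t(z,e)(g_t(z+e)-g_t(z))^2\le C$. Using $\varphi_t\ge m$ and setting $f_t(x)=\alpha_t^{d/2}g_t(\lfloor\alpha_tx\rfloor)$ (extended by $0$ outside $G$), a direct computation shows
$$
\int_{\R^d}\Bigl(\tfrac{f_t(x+e/\alpha_t)-f_t(x)}{1/\alpha_t}\Bigr)^2\,\d x=\alpha_t^2\sum_z\bigl(g_t(z+e)-g_t(z)\bigr)^2\le C/m,
$$
so the difference quotients of $f_t$ are uniformly bounded in $L^2$. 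A standard piecewise-affine interpolation of $g_t$ then produces $\widetilde f_t\in H_0^1(G)$ with $\|\widetilde f_t\|_{L^2}\to1$, $\|\nabla\widetilde f_t\|_{L^2}\le C'$, and $\|\widetilde f_t-f_t\|_{L^2}\to0$; by Rellich--Kondrashov (Theorem~\ref{thm:rellich-kondrashov}) we extract a subsequence with $\widetilde f_t\to f$ strongly in $L^2(G)$ and weakly in $H_0^1(G)$, and the regularity of $\partial G$ ensures $f\in H_0^1(G)$ with $\|f\|_2=1$. Lower semicontinuity of the weighted Dirichlet energy $f\mapsto\int_G\varphi(\cdot,e)(\partial_ef)^2$ under weak $H^1$-convergence (which holds because $\varphi(\cdot,e)$ is continuous and bounded below by $m>0$), combined with strong $L^2$-convergence for the potential term, gives
$$
\liminf_{t\to\infty}\lambda_1^{\ssup t}(\varphi,V)\ge\sum_{e\in\Ncal}\int_G\varphi(y,e)\bigl(\partial_ef(y)\bigr)^2\,\d y+(Vf,f)\ge\lambda_1(\varphi,V).
$$

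The main technical obstacle is the lower-bound step: one must bridge the discrete object $g_t$ (a function on $B_t$) to a genuinely $H_0^1(G)$ candidate while keeping both the $L^2$-norm and the Dirichlet energy under control, and while ensuring the zero-boundary condition is preserved in the limit. The piecewise-affine reconstruction on a simplicial subdivision of the dual lattice is the natural device, but comparing the $L^2$-norm of its gradient with the discrete Dirichlet sum (weighted by $\varphi_t$) requires that $\varphi$ be bounded and continuous, and that $G$ has a sufficiently regular boundary, which are exactly the assumptions imposed in Proposition~\ref{lem:rescaled-fix-ldp}.
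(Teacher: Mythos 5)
Your overall strategy coincides with the paper's: the upper bound by discretising smooth test functions and passing to the Riemann-sum limit, the lower bound by piecewise-affine interpolation of the discrete principal eigenfunction. The one place you deviate is the conclusion of the lower bound. You extract a weakly convergent subsequence via Rellich--Kondrashov, identify a limit $f\in H_0^1(G)$ with $\Vert f\Vert_2=1$, and invoke weak lower semicontinuity of the weighted Dirichlet energy. The paper instead observes that, because the piecewise-affine interpolant $f_t$ has constant gradient on each cell of mesh $\alpha_t^{-1}$ and $\varphi_t$ is precisely the cell average of $\varphi$, the discrete weighted Dirichlet form of $v_t$ equals the continuous weighted Dirichlet form of $f_t$ \emph{exactly}; hence the continuous Rayleigh quotient of $f_t/\Vert f_t\Vert_2$ is $\geq\lambda_1(\varphi,V)$ for every $t$, and with $\Vert f_t\Vert_2\to1$ the lower bound follows without any compactness or extraction of a limit. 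Your route is valid but introduces unnecessary machinery (you also do not need regularity of $\partial G$ — $H_0^1(G)$ is weakly closed as a closed subspace of $H^1$, so the weak limit lies there automatically). One point you acknowledge but leave unexecuted is the comparison between the $\ell^2$-bound on difference quotients of the piecewise-constant $f_t$ and the weighted Dirichlet energy of the piecewise-affine $\widetilde f_t$; the exact cell-average identity, which the paper exploits, is the cleanest way to close that gap and also handles the potential term via $\Vert\widetilde f_t-f_t\Vert_2\to0$ more transparently.
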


\begin{proof}Let us write $\lambda_1^{\ssup t}$ and $\lambda_1$ instead of $\lambda_1^{\ssup t}(\varphi,V)$ and $\lambda_1(\varphi,V)$.
We need to show that
\begin{eqnarray}
\limsup_{t\to\infty}\lambda_1^{\ssup t}&\leq& I^{\ssup{\rm c}}_\varphi(f^2)+(Vf,f),\qquad\text{for all }f\in\Fcal\text{ and}\label{eqn-lambdaConv1}\\
\liminf_{t\to\infty}\lambda_1^{\ssup t}&\geq& \lambda_1.\label{eqn-lambdaConv2}
\end{eqnarray}
\emph{Proof of \eqref{eqn-lambdaConv1}.}  This equation is only non-trivial for functions $f$ in $H^1_0(G)$, so let $f$ be such a function. As $\Ccal_{\rm c}^\infty(G)$ is dense in $H_0^1(G)$, there exists a sequence of functions $f^{\ssup n}\in \Ccal_{\rm c}^\infty(G)$ with $\Vert f-f^{\ssup n}\Vert_{H^1}\leq\frac 1 n$ for any $n\in\N$. Moreover, we may assume that the $H^1$-norms of all these functions $f^{\ssup n}$ are bounded by some constant $N>0$. With the convention
$$
f_t(z)^2=\alpha_t^{-d}\int_{[0,1)^d}f\Big(\frac{z+y}{\alpha_t}\Big)^2\,\d y,\quad z\in\alpha_tG\cap\Z^d,
$$
we have by the Rayleigh-Ritz formula
\begin{equation}\label{eqn-lambdaConv3}
\lambda_1^{\ssup t}\leq\alpha_t^2\sum_{z\in\alpha_tG\cap\Z^d,e\in \Ncal}\varphi_t(z,e)(f_t^{\ssup n}(z+e)-f_t^{\ssup n}(z))^2+\sum_{z\in\alpha_tG\cap\Z^d} V_t(z)f_t^{\ssup n}(z)^2.
\end{equation}
We estimate the first sum by
\begin{align*}
&\sum_{z,e}\varphi_t(z,e)(f_t^{\ssup n}(z+e)-f_t^{\ssup n}(z))^2\\
&=\alpha_t^{-d}\sum_{z,e}\varphi_t(z,e)\Big[\Big(\int_{[0,1)^d}f^{\ssup n}\Big(\frac {z+x+e}{\alpha_t}\Big)^2\,\d x\Big)^\frac 1 2-\Big(\int_{[0,1)^d}f^{\ssup n}\Big(\frac{z+x}{\alpha_t}\Big)^2\,\d x\Big)^\frac 1 2\Big]^2\\
&\leq\alpha_t^{-d}\sum_{z,e}\varphi_t(z,e) \int_{[0,1)^d} \Big[f^{\ssup n}\Big(\frac {z+x+e}{\alpha_t}\Big)-f^{\ssup n}\Big(\frac{z+x}{\alpha_t}\Big)\Big]^2\,\d x\\
&\leq\sum_{e}\int_G\varphi_t(\lfloor\alpha_ty\rfloor,e)\Big[f^{\ssup n}\Big(y+\frac{e}{\alpha_t}\Big)-f^{\ssup n}\Big(y\Big)\Big]^2\,\d y\\
&\leq\alpha_t^{-2}\sum_{e}\int_0^1\int_G\varphi_t(\lfloor\alpha_ty\rfloor,e)\partial_ef^{\ssup n}\Big(y+\frac{se}{\alpha_t}\Big)^2\,\d y\,\d s,
\end{align*}
making use of H\"older's inequality in the second step, an integral substitution in the third, and the fundamental theorem of calculus combined with Jensen's inequality and Fubini's theorem in the fourth. From here, we may estimate by the triangle inequality
\begin{align*}
&\alpha_t^2\sum_{z,e}\varphi_t(z,e)(f_t^{\ssup n}(z+e)-f_t^{\ssup n}(z))^2\leq\sum_{e}\int_G\varphi(y,e)(\partial_ef)^2(y)\,\d y+R_1+R_2+R_3
\end{align*}
with
\begin{align*}
R_1&=\sum_{e}\int_0^1\int_G\varphi_t(\lfloor\alpha_ty\rfloor,e)\Big[\partial_ef^{\ssup n}\Big(y+\frac{se}{\alpha_t}\Big)^2-\partial_ef^{\ssup n}(y)^2\Big]\,\d y\,\d s,\\
R_2&=\sum_{e}\int_G\varphi_t(\lfloor\alpha_ty\rfloor,e)\Big[\partial_e f^{\ssup n}(y)^2-\partial_ef(y)^2\Big]\,\d y,\\
R_3&=\sum_{e}\int_G\Big[\varphi_t(\lfloor\alpha_ty\rfloor,e)-\varphi(y,e)\Big](\partial_ef)^2(y)\,\d y.
\end{align*}
Firstly, by H\"older's and Minkowski's inequalities, we have 
$$
\vert R_1\vert\leq 2MN \sum_{e}\int_0^1\int_G \Big[\partial_ef^{\ssup n}\Big(y+\frac{se}{\alpha_t}\Big)-\partial_ef^{\ssup n}(y)\Big]^2\,\d y\,\d s,
$$ 
which converges to zero with $t\to\infty$ as $f^{\ssup n}$ is bounded and continuous. Again with H\"older's and Minkowski's inequalities, we find that $\vert R_2\vert\leq\frac{2MN}n$. The term $R_3$ goes to zero with $t\to\infty$ as $\varphi$ is bounded and continuous. Finally, convergence of $\sum_{z\in\alpha_tG\cap\Z^d} V_t(z)f_t^{\ssup n}(z)^2$ towards $(Vf,f)$ follows in a similar way by dint of Lebesgue's theorem. This means we have
\begin{equation}\label{eqn-lambdaConv4}
\limsup_{t\to\infty}\lambda_1^{\ssup t}\leq I^{\ssup{\rm c}}_\varphi(f^2)+(Vf,f)+\frac{2MN}n
\end{equation}
for all $n\in\N$ and $f\in H_0^1(G)$. Letting $n\to\infty$, we obtain \eqref{eqn-lambdaConv1}.

\emph{Proof of \eqref{eqn-lambdaConv2}.} We denote by $v_t$ the $\ell^2$-normed and positive principal eigenfunction of the operator $-\alpha_t^2\Delta^{\varphi_t}+V_t$ in $B_t$ with zero boundary condition corresponding to the eigenvalue $\lambda_1^{\ssup t}$. The strategy is to construct a sequence of functions $f_t\in H_0^1(G)$ satisfying
\begin{eqnarray}
-\alpha_t^2\big(\Delta^{\varphi_t}v_t,v_t\big)&=&I^{\ssup{\rm c}}_\varphi\big(f_t^2\big),\label{eqn-SameEnergy}\\
\liminf_{t\to\infty}\big(V_tv_t,v_t\big)&=&\liminf_{t\to\infty}\big(Vf_t,f_t\big),\label{eqn-SamePotential}\\
\lim_{t\to\infty}\Vert f_t\Vert_2&=&1.\label{eqn-NormsConverge}
\end{eqnarray}
Given such a sequence, we then easily deduce
\begin{align*}
\liminf_{t\to\infty}\Big(-\alpha_t^2\big(\Delta^{\varphi_t}v_t,v_t\big)+\big(V_tv_t,v_t\big)\Big)
&=\liminf_{t\to\infty}\Big(\Vert f_t\Vert_2^{-2}I^{\ssup{\rm c}}_\varphi\big(f_t^2\big)+\Vert f_t\Vert_2^{-2}\big(Vf_t,f_t\big)\Big)\\
&\geq\inf_{f\in \Fcal}\Big[I^{\ssup{\rm c}}_\varphi(f^2)+\big(Vf,f\big)\Big]=\lambda_1,
\end{align*}
which implies \eqref{eqn-lambdaConv2} as the $v_t$ are the discrete principal eigenfunctions.

The construction uses a finite element approach which was used in a similar way in \cite{BK11} and involves an extension of the discrete eigenfunctions $v_t$ onto the continuous space $\alpha_tG$ by linear interpolation along certain simplices and subsequent rescaling. The unit cube $K=[0,1]^d$ is split into $d!$ simplices as follows: For each permutation $\sigma\in\Sigma_d$ of the set $\{1,\ldots, d\}$, let $T_\sigma$ denote the interior of the convex hull of the integer vertices $0,e_{\sigma(1)},e_{\sigma(1)}+e_{\sigma(2)},\ldots,e_{\sigma(1)}+\ldots+e_{\sigma(d)}$ with $e_i$ the $i$-th unit vector. Consequently, the sets $T_\sigma$ with $\sigma\in\Sigma_d$ are pairwise disjoint. For Lebesgue-almost all $x\in\R$  we find $\sigma_x\in\Sigma_d$ such that $x-\lfloor x\rfloor$ is in $T_{\sigma_x}$. We may consequently define, for $t>0$, almost all $x\in\alpha_tG$ and $i\in\{1,\ldots,d\}$,
\begin{equation*}
g^{\ssup t}_i(x)=\big(x_{\sigma_x(i)}-\lfloor x_{\sigma_x(i)}\rfloor\big)\Big[v_t\big(\lfloor x\rfloor+e_{\sigma_x(1)}+\ldots+e_{\sigma_x(i)}\big)-v_t\big(\lfloor x\rfloor+e_{\sigma_x(1)}+\ldots+e_{\sigma_x(i-1)}\big)\Big].
\end{equation*}
Let us now define the sequence $f_t$ with the desired properties. If $y\in G$ with $\alpha_ty-\lfloor\alpha_ty\rfloor$ belonging to some $T_\sigma$, let
\begin{equation}\label{eqn-InterpolDef}
f_t(y)=\alpha_t^{d/2}v_t\big(\lfloor \alpha_ty\rfloor\big)+\alpha_t^{d/2}\sum_{i=1}^dg^{\ssup t}_i(\alpha_ty).
\end{equation}
We may extend the functions $f_t$ continuously to the whole space $G$ as is shown in \cite{BK11}, and they are clearly differentiable in all points $y\in G$ with $\alpha_ty-\lfloor\alpha_ty\rfloor$ belonging to some $T_\sigma$, which means $f_t\in H^1_0(G)$. It is easily seen that the functions $f_t$ satisfy \eqref{eqn-SameEnergy}: For almost all $y\in G$,
\begin{equation}
\partial_ef_t(y)=\alpha_t^{1+d/2}\big[v_t(\lfloor\alpha_ty\rfloor+e)-v_t(\lfloor\alpha_ty\rfloor)\big],\qquad e\in\Ncal, t>0.
\end{equation}
In particular, $\partial_e f_t$ is almost everywhere constant on the boxes $\alpha_t^{-1}(z+[0,1]^d)$ with $z\in\alpha_tG\cap\Z^d$, thus
\begin{align*}
\alpha_t^2\big(\Delta^{\varphi_t}v_t,v_t\big)&=\alpha_t^{d+2}\sum_{e\in\Ncal}\int_G\varphi_t(\lfloor\alpha_ty\rfloor,e)\big[v_t(\lfloor\alpha_ty\rfloor+e)-v_t(\lfloor\alpha_ty\rfloor)\big]^2\,\d y\\
&=\sum_{e\in\Ncal}\int_G\varphi_t(\lfloor\alpha_ty\rfloor,e)\big(\partial_ef_t(y))^2\,\d y\\
&=\sum_{e\in\Ncal}\int_G\varphi_t(\lfloor\alpha_ty\rfloor,e)\big(\partial_ef_t(y))^2\,\d y=-I^{\ssup{\rm c}}_\varphi\big(f_t^2\big).
\end{align*}
Let us in a next step show that the functions $f_t$ also satisfy \eqref{eqn-NormsConverge}. By the triangle inequality applied to \eqref{eqn-InterpolDef}, it is enough to show that the $L^2(G)$-norms of each sequence of functions $\alpha_t^{d/2}g^{\ssup t}_i(\alpha_t\cdot)$, $i=1,\ldots,d$,  vanish as $t\to\infty$. We calculate 
\begin{align}\label{eqn:interpolation-residual}
&\alpha_t^d\Vert \sum_{i=1}^dg^{\ssup t}_i(\alpha_t\cdot)\Vert_2^2=\alpha_t^d\int_G(\sum_{i=1}^dg^{\ssup t}_i(\alpha_ty))^2\,\d y\notag\\
&\leq\sum_{i=1}^d\Big(\int_{\alpha_tG}\big(y_{\sigma_y(i)}-\lfloor y_{\sigma_y(i)}\rfloor\big)^2\Big[v_t\big(\lfloor y\rfloor+e_{\sigma_y(1)}+\ldots+e_{\sigma_y(i)}\big)\notag\\
&\qquad -v_t\big(\lfloor y\rfloor+e_{\sigma_y(1)}+\ldots+e_{\sigma_y(i-1)}\big)\Big]^2\,\d y\Big)\notag\\
&\leq\sum_{i=1}^d\Big(\int_{\alpha_tG}\Big[v_t\big(\lfloor y\rfloor+e_{\sigma_y(1)}+\ldots+e_{\sigma_y(i)}\big)-v_t\big(\lfloor y\rfloor+e_{\sigma_y(1)}+\ldots+e_{\sigma_y(i-1)}\big)\Big]^2\,\d y\Big)\notag\\
&=\sum_{e\in\Ncal}\int_{\alpha_tG}\Big[v_t\big(\lfloor y\rfloor+e\big)-v_t\big(\lfloor y\rfloor\big)\Big]^2\,\d y\notag\\
&\leq m^{-1}\sum_{z\in\alpha_tG\cap\Z^d,e\in\Ncal}\varphi_t(z,e)\big[v_t(z+e)-v_t(z)\big]^2.
\end{align}
The last expression must converge to zero as $t\to\infty$ as the converse would imply $$\limsup_{t\to\infty}\alpha_t^2\big(-\Delta^{\varphi_t}v_t,v_t\big)=\infty$$ in contradiction to \eqref{eqn-lambdaConv1} that we have already proven. Equation \eqref{eqn-SamePotential} is seen as follows. By the triangle inequality,
\begin{align*}
\Big\vert\big(V_tv_t,v_t\big)-\big(Vf_t,f_t\big)\Big\vert&\leq\sum_{z\in\alpha_tG\cap\Z^d}\big\vert V_t(z)-V(z/\alpha_t)\big\vert\big(v_t(z)\big)^2\\
&\qquad +\int_G\big\vert V(y)\big\vert\Big\vert\alpha_t^d\big(v_t(\lfloor\alpha_ty\rfloor)\big)^2-\big(f_t(y)\big)^2\Big\vert\,\d y,
\end{align*}
where the second term vanishes with $t\to\infty$ due to \eqref{eqn-NormsConverge} and the fact that $V$ is bounded. As $v_t$ is normed, we obtain an upper bound for the first term by replacing $\big(v_t(z)\big)^2$ with $\delta_z(z_t)$ where $$z_t=\arg\max \big\vert V_t(z)-V(z/\alpha_t)\big\vert.$$ Then, \eqref{eqn-SamePotential} follows considering that $\big\vert V_t(z_t)-V(z_t/\alpha_t)\big\vert\to 0$ as $V$ is uniformly continuous. This finishes the proof of \eqref{eqn-lambdaConv2}.
\end{proof}

Recall that $v_t$ denotes the $\ell^2$-normed and positive principal eigenfunction of $-\alpha_t^2\Delta^{\varphi_t}+V_t$ in $B_t=\alpha_t G\cap\Z^d$ with zero boundary condition corresponding to the eigenvalue $\lambda_1^{\ssup t}=\lambda_1^{\ssup t}(\varphi,V)$.

\begin{lemma}\label{lem-eigenfunction-decay}
Under the assumptions of Lemma~\ref{lem:eigenvalues-converge},
$$
\liminf_{t\to\infty}\frac{\beta_t\alpha_t^2}{t}\log v_t(0)\geq 0.
$$
\end{lemma}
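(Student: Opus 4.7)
The plan is to prove the stronger pointwise bound $v_t(0)\geq c\alpha_t^{-d/2}$ for some constant $c>0$ independent of $t$, from which the claim follows at once: the hypothesis $\alpha_t^{d+2}\ll t$ forces $\log\alpha_t=O(\log t)$, while $\gamma_t:=t/(\beta_t\alpha_t^2)=t^{\eta/(1+\eta)}\alpha_t^{(d-2\eta)/(1+\eta)}\gg t^{d/(d+2)}\gg\log\alpha_t$, so that
$$\frac{\beta_t\alpha_t^2}{t}\log v_t(0)\geq -\frac{d\log\alpha_t}{2\gamma_t}+O(\gamma_t^{-1})\to 0.$$

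The starting point will be the Feynman-Kac representation of the ground state. Let $(Y_u)_{u\geq 0}$ denote the continuous-time random walk on $\Z^d$ with generator $\alpha_t^2\Delta^{\varphi_t}$, write $\tau$ for its first exit from $B_t$, and let $\me_0,\P_0$ denote its law with $Y_0=0$. Since $v_t$ is the ground state of $-\alpha_t^2\Delta^{\varphi_t}+V_t$ on $B_t$ with Dirichlet boundary, for any $s>0$,
$$v_t(0)=e^{s\lambda_1^{\ssup t}(\varphi,V)}\,\me_0\Big[\exp\Big(-\int_0^s V_t(Y_u)\,\d u\Big)v_t(Y_s)\1\{s<\tau\}\Big].$$
Fix $s=s_0>0$ arbitrary. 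Lemma~\ref{lem:eigenvalues-converge} gives $\lambda_1^{\ssup t}(\varphi,V)\to\lambda_1(\varphi,V)$, and $|V_t|\leq\|V\|_\infty$, so the deterministic prefactor stays bounded away from zero uniformly in $t$, whence $v_t(0)\geq c_1\me_0[v_t(Y_{s_0})\1\{s_0<\tau\}]$.

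Two inputs will combine to give $\me_0[v_t(Y_{s_0})\1\{s_0<\tau\}]\geq c\alpha_t^{-d/2}$. The first, a \emph{heat-kernel bound}, exploits that since $m\leq\varphi\leq M$ the walk $Y$ has jump rates in $[m\alpha_t^2,M\alpha_t^2]$; Nash--Aronson / Delmotte type Gaussian lower bounds for the Dirichlet heat kernel of uniformly elliptic walks (equivalently, the invariance principle for $Y_u/\alpha_t$ together with a local CLT) will produce $r,c_2>0$ with $\overline{B(0,2r)}\subset G$ and
$$\P_0(Y_{s_0}=y,\,s_0<\tau)\geq c_2\alpha_t^{-d}\qquad\text{for all }y\in\alpha_tB(0,r)\cap\Z^d,$$
uniformly in $t$ large. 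The second, a \emph{macroscopic $\ell^1$ mass bound for $v_t$}, exploits the finite-element interpolation $f_t\in H_0^1(G)$ of $\alpha_t^{d/2}v_t(\lfloor\alpha_t\cdot\rfloor)$ constructed in the proof of Lemma~\ref{lem:eigenvalues-converge}: this $f_t$ is a minimising sequence for the Rayleigh quotient of $-\nabla^\ast A\nabla+V$ with $\|f_t\|_2\to 1$, so by weak compactness in $H_0^1(G)$ and Rellich--Kondrashov it converges in $L^2(G)$ (along the full sequence, by simplicity of the principal eigenvalue) to the continuous principal eigenfunction $v$. De Giorgi--Nash--Moser regularity (as $A,V$ are bounded and $A$ uniformly elliptic) gives $v\in C(\overline G)$, and the strong maximum principle yields $v>0$ on $G$; in particular $\int_{B(0,r)}v>0$. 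Comparing $\int_{B(0,r)}f_t\to\int_{B(0,r)}v$ with $\alpha_t^{-d/2}\sum_{z\in\alpha_tB(0,r)\cap\Z^d}v_t(z)$, and controlling the interpolation residual via \eqref{eqn:interpolation-residual}, will yield $\sum_{z\in\alpha_tB(0,r)\cap\Z^d}v_t(z)\geq c_3\alpha_t^{d/2}$ for all $t$ large.

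Combining these inputs gives $\me_0[v_t(Y_{s_0})\1\{s_0<\tau\}]\geq c_2\alpha_t^{-d}\cdot c_3\alpha_t^{d/2}=c_4\alpha_t^{-d/2}$, hence $v_t(0)\geq c_5\alpha_t^{-d/2}$, completing the proof. The hard part will be the first input: since the generator depends on $t$ via the rescaled coefficient field $\varphi_t$, the heat-kernel lower bound must hold with a constant uniform in $t$. Uniform ellipticity $m\leq\varphi_t\leq M$ together with the continuity of $\varphi(\cdot,e)$ on $G$ (so that $\varphi_t$ is macroscopically close to a fixed continuous profile) should make this a routine adaptation of the discrete Moser iteration for parabolic Harnack, but some care is needed to ensure uniformity in $t$.
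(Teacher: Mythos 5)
Your approach is genuinely different from the paper's (in the case $d\geq 2$), and it aims at a stronger conclusion: the sharp pointwise bound $v_t(0)\geq c\,\alpha_t^{-d/2}$, which indeed implies the lemma because $\gamma_t=\beta_t^\eta\alpha_t^d\gg t^{d/(d+2)}\gg\log\alpha_t$; that reduction is correct. For $d=1$ the paper's own argument is essentially what you sketch in your second input (convergence of the finite-element interpolation to the continuous ground state, plus positivity of the latter on compacts); for $d\geq 2$, however, the paper avoids all heat-kernel technology and instead uses a deliberately crude Poisson jump count: starting from the Feynman--Kac representation with $s=1$, it bounds $\min_{x\in B_t}\P^{\alpha_t^2\varphi_t}_0(X_1=x)$ from below by forcing the last $O(\alpha_t)$ jumps of the walk, yielding only $\log v_t(0)\geq -2dm\alpha_t^2+o(\alpha_t^2)$. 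This is far weaker than your $\geq-\tfrac{d}{2}\log\alpha_t+O(1)$, but it suffices precisely because $\alpha_t^2\ll\gamma_t$ when $d\geq 2$; it fails only for $d=1$, where the paper switches to the interpolation argument. Your plan is more uniform in dimension and yields the correct order of $v_t(0)$, but it invokes substantially heavier machinery.

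Two substantive gaps remain only sketched. The heat-kernel lower bound is asserted, not proved: since $\varphi_t$ is a deterministic rescaled profile rather than a stationary ergodic random field, the invariance principle references of the paper do not apply; what you actually need is the Delmotte-type parabolic Harnack inequality (uniform over conductances in $[m,M]$) for the free walk, together with a chaining argument to pass to the Dirichlet kernel on $B_t$; the uniformity in $t$ that you flag is indeed routine given uniform ellipticity, but this is a nontrivial block of work not present in the paper. The $\ell^1$ mass bound is essentially fine but needs to be carried out: one must pass from $f_t\to v$ strongly in $L^2$ (along the full sequence, using positivity of $v_t$ and $f_t$ to rule out the limit $-v$) to $\int_{B(0,r)}f_t\to\int_{B(0,r)}v>0$ by Cauchy--Schwarz, then subtract the interpolation residual (controlled in $L^2$ by \eqref{eqn:interpolation-residual}) and again Cauchy--Schwarz on the ball to extract $\sum_{z\in\alpha_t B(0,r)}v_t(z)\geq c\alpha_t^{d/2}$; positivity of $v_t$ is what makes this an $\ell^1$ lower bound. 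In short: the strategy is correct and even sharper than the paper's, but the two key inputs are left as claims whose proofs would make the argument markedly longer than the paper's elementary route.
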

\begin{proof}
We treat the cases $d=1$ and $d\geq 2$ separately. 

\emph{Case $d=1$}: There is a unique $L^2$-normed $g\in H_0^1(G)$ such that 
$$
I^{\ssup{\rm c}}_\varphi(g^2)+(Vg,g)=\lambda_1(\varphi,V)
$$ 
and $g$ is strictly positive in the sense that for any compact set $K\subset G$ there exists $\delta>0$ such that $g>\delta$ almost everywhere on $K$, thus $g>\delta_1$ on $[-\delta_2,\delta_2]^d\subset G$ for some fixed positive constants $\delta_1,\delta_2$. This follows from the spectral theorem for uniformly elliptic operators (compare e.g.~\cite{Z90}), note that $\varphi$ is continuous and $0<m\leq\varphi\leq M<\infty$ by assumption. Let $f_t$ be the interpolating sequence we have constructed in the proof of the previous lemma. We now  show that $f_t$ converges to $g$ in $L^\infty$ towards $g$ as $t\to\infty$. As every sequence $(f_{t_k})_{k\in\N}$ is a minimizing sequence with respect to the Dirichlet energy associated with $-\alpha_t\Delta^\varphi_t+V_t$, and $\varphi$ is bounded away from zero, this sequence is bounded in $H_0^1(G)$ and therefore 
admits a weakly convergent subsequence that we also denote by $(f_{t_k})_{k\in\N}$. By the Rellich-Kondrashov theorem (Theorem \ref{thm:rellich-kondrashov}) in the special case $p=2$, $d=1$, we have $f_{t_k}\to f$ in $L^\infty$ for some $f\in L^2(G)$. As the minimiser $g$ is unique and 
$$
I^{\ssup{\rm c}}_\varphi(f^2)+(Vf,f)\leq \liminf_{k\to\infty} I^{\ssup{\rm c}}_\varphi\big(f_{t_k}^2\big)+(Vf_{t_k},f_{t_k})
$$ 
by lower semicontinuity of $I^{\ssup{\rm c}}_\varphi$ and continuity of $V$, we have $f=g$. For $t$ large enough, we have consequently $f_t>\delta_1/2$ on $[-\delta_2,\delta_2]^d$. As $f_t$ interpolates $\alpha_t^{d/2}v_t$, this also implies 
that $\alpha_t^{d/2}v_t(0)>\delta_1/2$. The decay of $v_t(0)$ is therefore only polynomial in $t$ and the assertion is shown.

\emph{Case $d\geq 2$}: As $v_t$ is an eigenfunction of $-\alpha_t^2\Delta^{\varphi_t}+V_t$ corresponding to the eigenvalue $\lambda^{\ssup t}(\varphi,V)$, we have
$$
v_t(0)=\e^{-\lambda_t(V)}\big(\exp\{\alpha_t^2\Delta^{\varphi_t}-V_t\}v_t\big)(0)=\e^{-\lambda_t(V)}\E^{\alpha_t^2\varphi_t}_0\Big[\exp\Big\{-\int_0^1V_t(X_s)\,\d s\Big\}v_t(X_1)\Big].
$$
Abbreviating $v_t^\ast=\max_{\alpha_tG\cap\Z^d} v_t$ and $V_\ast=\sup_GV$, we estimate
\begin{equation*}
v_t(0)\geq v_t^\ast\e^{-\lambda_t(V)-V_\ast}\min_{x\in\alpha_tG\cap\Z^d}\P^{\alpha_t^2\varphi_t}_0\big(X_1=x\big).
\end{equation*}
As $v_t$ is normed, the decay of its maximal value is slower than exponential as $t\to\infty$, so we only need to consider the exponential decay of the probability term above. With $\vert \cdot\vert=\vert \cdot\vert_1$ denoting the lattice distance, $r$ the radius of the smallest ball to contain $G$ and $S_1$ the random number of jumps a random walk makes up to time $1$, we have
$$
\P^{\alpha_t^2\varphi_t}_0\big(X_1=x\big)=\sum_{k=\vert x\vert}^\infty\P^{\alpha_t^2\varphi_t}_0\big(X_1=x,S_1=k\big)\geq \Big(\frac{2dM}{m}\Big)^{-2dr\lceil\alpha_t\rceil}\P^{\alpha_t^2\varphi_t}_0\big(S_1\geq\vert x\vert\big),
$$
as jump times are independent from jump directions and the random walk can always reach the vertex $x$ by making its last $2dr\lceil\alpha_t\rceil$ steps in the \lq right\rq~direction, since this is the maximum lattice distance within $\alpha_tG$. Certainly the probability of the random walk generated by $\Delta^{\alpha_t^2\varphi}$ to make at least $\vert x\vert$ jumps dominates the probability of the slower simple random walk generated by $\alpha_t^2m\Delta$ to make at least $2dr\alpha_t$ jumps. Thus,
\begin{align*}
\min_{x\in\alpha_tG\cap\Z^d}\P^{\alpha_t^2\varphi_t}_0\big(X_1=x\big)&\geq \Big(\frac{2dM}{m}\Big)^{-2dr\lceil\alpha_t\rceil}\P^{\alpha_t^2m}_0\big(S_1\geq 2dr\alpha_t)\\
&= \Big(\frac{2dM}{m}\Big)^{-2dr\lceil\alpha_t\rceil}\e^{-2d\alpha_t^2m}\sum_{k=2dr\lceil\alpha_t\rceil}^\infty \frac{(2d\alpha_t^2m)^k}{k!}\\
&\geq \Big(\frac{2dM}{m}\Big)^{-2dr\lceil\alpha_t\rceil}\e^{-2d\alpha_t^2m}\frac{(2d\alpha_t^2m)^{2dr\lceil\alpha_t\rceil}}{(2dr\lceil\alpha_t\rceil)!}.
\end{align*}
In the last line, we observe that the fraction in the end is greater than $1$ if $\alpha_t$ is large enough. Therefore,
\begin{equation}\label{eqn-decay-eigenfunction}
\log v_t(0)\geq -2d\alpha_t^2m+o(\alpha_t^2).
\end{equation}
As we are in the case $d\geq 2$ and we have chosen $\beta_t\gg 1$ such that $\alpha_t^d\beta_t^\eta=t\alpha_t^{-2}\beta_t^{-1}$, we may conclude $\alpha_t^2\ll t\alpha_t^{-2}\beta_t^{-1}$. The assertion follows dividing \eqref{eqn-decay-eigenfunction} by $t\alpha_t^{-2}\beta_t^{-1}$ and passing to the limit inferior. 
\end{proof}

\begin{proof}[Proof of Proposition \ref{lem:rescaled-fix-ldp}] The proof of the LDP in Proposition~\ref{lem:rescaled-fix-ldp} relies on the G\"artner-Ellis-theorem (e.g., in \cite{DZ98}). It will be sufficient to show that
\begin{equation}\label{eqn:rescaled-fix-cumulant-generating}
\lim_{t\to\infty}\frac{\beta_t\alpha_t^2}{t}\log\E_z^{\beta_t^{-1}\varphi_t}\Big[\exp\Big\{-\frac{t}{\beta_t\alpha_t^2}\int_G V(y)L_t(y)\,\d y\Big\}\,\Big\vert\, X_{[0,t]}\subset\alpha_tG\Big]=-\lambda_1(\varphi,V)+\lambda_1(\varphi,0).
\end{equation}
for all $V\in \Ccal_{\rm b}(G)$. Then, by the G\"artner-Ellis theorem, the desired result follows as the Legendre transform of the rate function $I_{\varphi,0}^{\ssup{\rm c}}$ is given by
\begin{equation}
V\mapsto \sup_{g^2\in\Fcal}\big\{(V,g^2)-I_{\varphi,0}^{\ssup{\rm c}}(g^2)\big\}=\sup_{g^2\in\Fcal}\big\{(V,g^2)-I_{\varphi}^{\ssup{\rm c}}(g^2)\big\}+\lambda_1(\varphi,0)=-\lambda_1(\varphi,V)+\lambda_1(\varphi,0).
\end{equation}
Here, $(\cdot,\cdot)$ denotes the $L^2(G)$-scalar product and we have made use of the well-established fact that the eigenvalue $\lambda_1(\varphi,V)$ satisfies the variational equality
\begin{equation}
\lambda_1(\varphi,V)=\inf_{g^2\in\Fcal}\big\{I_{\varphi}^{\ssup{\rm c}}(g^2)-(V,g^2)\big\}.
\end{equation}
For $V\in \Ccal_{\rm b}(G)$, introduce the operator $\Pcal_t^{\varphi,V}$ on $\ell^2(\alpha_tG\cap\Z^d)$ by
\begin{equation*}
\Pcal_t^{\varphi,V}f(z)=\E_z^{\beta_t^{-1}\varphi_t}\Big[\exp\Big\{-\frac{t}{\beta_t\alpha_t^2}\int_G V(y)L_t(y)\,\d y\Big\}\1\{X_{[0,t]}\subset\alpha_tG\}f(X_t)\Big].
\end{equation*}
Then, \eqref{eqn:rescaled-fix-cumulant-generating} is shown for all $V\in \Ccal_{\rm b}(G)$ if we verify
\begin{equation}\label{eqn:cumulant-generating}
\lim_{t\to\infty}\frac{\beta_t\alpha_t^2}{t}\log\Pcal_t^{\varphi,V}\1(0)=-\lambda_1(\varphi,V)
\end{equation}
for all such $V$ (including $V\equiv 0$). Recalling the notation \eqref{eqn:vtdef} and using that $L_t$ is a step function, we calculate
\begin{align*}
\Pcal_t^{\varphi,V}f(z)=\E_z^{\beta_t^{-1}\varphi_t}\Big[\exp\Big\{-\frac{1}{\beta_t\alpha_t^2}\int_0^t V_t(X_s)\,\d s\Big\}\1\{X_{[0,t]}\subset\alpha_tG\}f(X_t)\Big].
\end{align*}
Consequently, $\Pcal_t^{\varphi,V}$ admits the semigroup representation
$$
\Pcal_t^{\varphi, V}=\exp\{t(\Delta^{\beta_t^{-1}\varphi_t}-\beta_t^{-1}\alpha_t^{-2}V_t)\}=\exp\Big\{t\beta_t^{-1}\alpha_t^{-2}\big[\alpha_t^2\Delta^{\varphi_t}-V_t\big]\Big\},
$$
where the operator in the exponent is considered in $\ell^2(\alpha_tG\cap\Z^d)$ with zero boundary condition. Note that $\Pcal_t^{\varphi, V}$ has the same principal eigenfunction $v_t$ as the operator $-\alpha_t^2\Delta^{\varphi_t}+V_t$ has, and the corresponding principal eigenvalue is given by $\exp\big\{-\frac{t}{\beta_t\alpha_t^2}\lambda_1^{\ssup t}(\varphi,V)\big\}$. An eigenvalue expansion yields, for each $t\geq 0$,
\begin{equation*}
\exp\Big\{-\frac{t}{\beta_t\alpha_t^2}\lambda_1^{\ssup t}(\varphi,V)\Big\}\big(v_t(0)\big)^2\leq \Pcal_t^{\varphi,V}\1(0)\leq\vert\alpha_tG\vert^2\exp\Big\{-\frac{t}{\beta_t\alpha_t^2}\lambda_1^{\ssup t}(\varphi,V)\Big\}.
\end{equation*}
Thus, (\ref{eqn:cumulant-generating}) follows by Lemmas \ref{lem:eigenvalues-converge} and \ref{lem-eigenfunction-decay}.
\end{proof}

\begin{remark}\label{rem:ldp-rescaled-fix}
In the proof of the lower bound in Theorem~\ref{thm:ldp-growing-good}, we in fact use Proposition~\ref{lem:rescaled-fix-ldp} for the local times $L_t$ under $\P^{\beta_t^{-1}(\varphi_t-\delta\alpha_t^{-2})}$ where $0<\delta<m$ instead of $\P^{\beta_t^{-1}\varphi_t}$. It is easily seen that the proof given above works just as well with this slight modification as we are only subtracting a spatially constant factor that vanishes as $t\to\infty$. However, we prefer to omit this modification in the proof and in the statement of the lemma for reasons of conciseness.
\end{remark}

\subsection{Large deviations for rescaled conductances}\label{subsec:asycond}

In this section, we characterise the asymptotic probability of having a small conductance field. The first important lemma will be used for the lower bound in Theorem~\ref{thm:ldp-growing-good} and reads like the lower bound of an LDP for the rescaled conductances in a growing box. Consider the set
\begin{equation}\label{Atdef}
A(B,\psi,\delta)=\{\widetilde\psi\colon B\times \Ncal\to (0,\infty)\,\vert\, \psi-\delta\leq\widetilde\psi\leq \psi\}
\end{equation}
and recall the scale function $\beta_t\gg1$ from \eqref{betadef}. It turns out that we will need a lower estimate for the probability of the event that $\beta_ta$ is $\delta\alpha_t^{-2}$-close to $\varphi_t$ on $B_t=\alpha_t G\cap\Z^d$, i.e., lies in $A(B_t,\varphi_t,\delta\alpha_t^{-2})$. Here, $\varphi_t$ is the unscaled version of $\varphi$ defined in \eqref{varphitdef}.

\begin{lemma}\label{lem:lbound-environment} Fix a scale function $\beta_t\gg 1$, positive numbers $m<M$ and some $\varphi\colon G\times\Ncal\to(m,M)$ such that $\varphi(\cdot,e)\in\Ccal_{\rm b}(G)$ for any $e\in\Ncal$. Then, for any $\delta\in(0,m)$,
\begin{equation}\label{ldp-aux-conductances}
\liminf_{t\to\infty}\frac 1{\beta_t^\eta\alpha_t^d}\log\Pr\big(\beta_ta\in A(B_t,\varphi_t,\delta\alpha_t^{-2})\big)\geq -D\sum_{e\in\Ncal}\int_G \varphi(y,e)^{-\eta}\,\d y.
\end{equation}
\end{lemma}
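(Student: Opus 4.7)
My plan is to exploit the independence of the conductances across bonds. Writing $v_{z,e}=\beta_t^{-1}\varphi_t(z,e)$ and $u_{z,e}=v_{z,e}-\delta\beta_t^{-1}\alpha_t^{-2}$, the event in question factorises, and the proof reduces to estimating $\Pr(a(z,e)\in[u_{z,e},v_{z,e}])$ bond by bond. Because $\beta_t\to\infty$ and $m\le\varphi\le M$, both endpoints tend to $0$ uniformly in $(z,e)$, so Assumption~\ref{assumption:tails} can be applied uniformly for $t$ large.

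The first technical task will be converting the tail asymptotic $-\log\Pr(a(0,e_1)\le\eps)\sim D\eps^{-\eta}$ into a lower bound on the small-window probability $\Pr(a\in[u,v])$. Setting $\phi(\eps):=-\log\Pr(a(0,e_1)\le\eps)$, I will use the identity
\begin{equation*}
\Pr(a\in[u,v])=e^{-\phi(v)}\bigl(1-e^{-(\phi(u)-\phi(v))}\bigr).
\end{equation*}
Given $\delta_0>0$, I pick $\eps_0>0$ so that $(D-\delta_0)\eps^{-\eta}\le\phi(\eps)\le(D+\delta_0)\eps^{-\eta}$ on $(0,\eps_0]$. For $t$ large every $v_{z,e}$ lies below $\eps_0$, so the first factor is bounded below by $\exp(-(D+\delta_0)\beta_t^\eta\varphi_t(z,e)^{-\eta})$. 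For the second factor, a Taylor expansion of $s\mapsto s^{-\eta}$ at $v_{z,e}$ (using $\delta\alpha_t^{-2}/\varphi_t\to 0$) together with the two-sided tail estimate gives $\phi(u_{z,e})-\phi(v_{z,e})\ge c_0\beta_t^\eta\alpha_t^{-2}$ for some $c_0>0$; using $\log(1-e^{-x})\ge\log(x/2)$ for small $x>0$ and boundedness for large $x$, the per-bond log-correction is $O(\log(\alpha_t\beta_t))$.

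Summing the logarithms over the $O(\alpha_t^d)$ bonds, the aggregate correction is $O(\alpha_t^d\log(\alpha_t\beta_t))$, which is negligible against $\beta_t^\eta\alpha_t^d$. For the principal contribution I apply Jensen's inequality to the convex map $x\mapsto x^{-\eta}$:
\begin{equation*}
\varphi_t(z,e)^{-\eta}\le\int_{[0,1]^d}\varphi\bigl((z+y)/\alpha_t,e\bigr)^{-\eta}\,dy.
\end{equation*}
Summing over $z\in B_t$ converts the right-hand side into a single integral over $B_t+[0,1]^d$; after rescaling by $\alpha_t$ and exploiting continuity of $\varphi(\cdot,e)$ and regularity of $\partial G$, this tends to $\alpha_t^d\int_G\varphi(y,e)^{-\eta}\,dy$, up to a lower-order boundary term. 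Dividing by $\beta_t^\eta\alpha_t^d$, passing to $\liminf_{t\to\infty}$, and finally letting $\delta_0\downarrow 0$ delivers the stated bound.

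The principal obstacle will be the factor $1-e^{-(\phi(u)-\phi(v))}$: Assumption~\ref{assumption:tails} pins down only the total mass $F(v)$ below $v$, not the mass in the narrow window $[u,v]$. The argument succeeds because, although $\phi(u)-\phi(v)$ may be small (of order $\beta_t^\eta\alpha_t^{-2}$), the resulting per-bond correction is merely logarithmic in $\alpha_t\beta_t$, so aggregating it across the $\sim\alpha_t^d$ bonds remains negligible against the principal $\beta_t^\eta$ factor carried by each main term.
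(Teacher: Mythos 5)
Your overall architecture mirrors the paper's (bondwise factorisation, tail asymptotics, Riemann-sum/Jensen passage to the integral), but there is a genuine gap at the step where you lower-bound $\phi(u_{z,e})-\phi(v_{z,e})$.

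You claim that a Taylor expansion of $s\mapsto s^{-\eta}$ at $v_{z,e}$ "together with the two-sided tail estimate gives $\phi(u_{z,e})-\phi(v_{z,e})\geq c_0\beta_t^\eta\alpha_t^{-2}$." This does not follow. Writing the tail asymptotic as $\phi(\eps)=D\eps^{-\eta}\bigl(1+r(\eps)\bigr)$ with $r(\eps)\to 0$, you get
\begin{equation*}
\phi(u)-\phi(v)=D\bigl(u^{-\eta}-v^{-\eta}\bigr)+D\bigl(u^{-\eta}r(u)-v^{-\eta}r(v)\bigr).
\end{equation*}
The first (Taylor) term is of order $\beta_t^\eta\alpha_t^{-2}$, as you say. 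But the second term is only controlled by $\sup|r|\cdot\bigl(u^{-\eta}+v^{-\eta}\bigr)$, which is of order $\delta_0\beta_t^\eta$ for any fixed $\delta_0>0$; since $\alpha_t^{-2}\to 0$, this error eventually \emph{dominates} the Taylor term, and one cannot even pin down the sign of the combined expression, let alone obtain the claimed lower bound. In fact Assumption~\ref{assumption:tails} alone does not rule out conductance laws with atoms or gaps on scales far coarser than $\beta_t^{-1}\alpha_t^{-2}$, in which case $\Pr\bigl(a\in[u_{z,e},v_{z,e}]\bigr)$ could simply be zero for the window you chose. You do acknowledge that the assumption "pins down only the total mass $F(v)$ below $v$, not the mass in the narrow window," but the resolution you offer (aggregated log-corrections are small) presupposes exactly the per-bond lower bound that fails.

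This is precisely the point where the paper invokes the additional hypothesis of Theorem~\ref{thm:ldp-growing-good} that $a_{xy}\1\{a_{xy}\leq\eps\}$ has a non-decreasing density. The paper's proof proceeds in two steps: it first establishes the estimate with the $t$-independent window width $\delta$ (where the ratio $\varphi_t/(\varphi_t-\delta)$ is bounded away from $1$, so the exponent $\phi(u)-\phi(v)$ is genuinely of order $\beta_t^\eta$ and the correction factor $1-\e^{-\cdot}\to 1$), and then uses the density assumption to transfer mass from the wide window into the narrow one via $\Pr\bigl(x-\alpha_t^{-2}\delta\leq a\leq x\bigr)\geq C\alpha_t^{-2}\Pr\bigl(x-\delta\leq a\leq x\bigr)$, at a harmless cost of $O(\alpha_t^d\log\alpha_t)$ in the log. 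Without this two-step structure and the density hypothesis, your argument cannot close.
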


\begin{proof}As a pre-step we first derive this lower estimate for the event that $\beta_ta$ is only $\delta$-close, i.e., we prove \eqref{ldp-aux-conductances} with $\delta\alpha_t^{-2}$ replaced by $\delta$. Assumption~\ref{assumption:tails} yields the existence of a non-decreasing map $R\colon [0,\infty)\rightarrow[0,\infty)$ with $R(\eps)\stackrel{\eps\to 0}{\rightarrow}0$ such that, for all $\eps>0$,  
$$
-D\eps^{-\eta}(1+R(\eps))\leq\log\Pr(a(0,e_1)\leq\eps)\leq-D\eps^{-\eta}(1-R(\eps)).
$$ 
Therefore, we may estimate
\begin{align}\label{eqn:ldp-conductances-1}
\Pr(\beta_ta\in A(B_t,\varphi_t,\delta))&=\prod_{z,e}\Big[\Pr(a(z,e)\leq\beta_t^{-1}\varphi_t(z,e))-\Pr(a(z,e)\leq\beta_t^{-1}(\varphi_t(z,e)-\delta))\Big]\notag\\
&\geq \prod_{z,e}\Big[\e^{-D\beta_t^\eta \varphi_t(z,e)^{-\eta}(1+R(\beta_t^{-1}M))}-\e^{-D\beta_t^\eta(\varphi_t(z,e)-\delta)^{-\eta}(1-R(\beta_t^{-1}M))}\Big]\notag\\
&=\prod_{z,e}\e^{-D\beta_t^\eta \varphi_t(z,e)^{-\eta}(1+R(\beta_t^{-1}M))}\notag\\
&\quad\times\prod_{z,e}\Big[1-\e^{-D\beta_t^\eta\big[(\varphi_t(z,e)-\delta)^{-\eta}(1-R(\beta_t^{-1}M))-\varphi_t(z,e)^{-\eta}(1+R(\beta_t^{-1}M))\big]}\Big].
\end{align}
Pick some positive $\delta_0$ and choose $t$ large enough to satisfy 
$$
\Big(\frac{M}{M-\delta}\Big)^\eta>\frac{1+R(\beta_t^{-1}M)}{1-R(\beta_t^{-1}M)} + \delta_0.
$$ 
Thus, for all $z\in B_t$, 
$$
(\varphi_t(z,e)-\delta)^{-\eta}(1-R(\beta_t^{-1}M))-\varphi_t(z,e)^{-\eta}(1+R(\beta_t^{-1}M))>2\delta_0M^{-1}.
$$ 
We may therefore continue \eqref{eqn:ldp-conductances-1} by
\begin{align*}
\log\Pr(\beta_ta\in A(B_t,\varphi_t,\delta))&\geq-D\beta_t^\eta\sum_{z,e}\varphi_t(z,e)^{-\eta}(1+R(\beta_t^{-1}M))
&+d\vert\alpha_tG\vert\log\big(1-\e^{-2DM^{-1}\delta_0\beta_t^\eta}\big).
\end{align*}
Finally, by H\"older's reverse inequality and merging asymptotically negligible terms,
\begin{align*}
\frac 1{\beta_t^\eta\alpha_t^d}&\log\Pr(\beta_ta\in A(B_t,\varphi_t,\delta))\geq-D\alpha_t^{-d}\sum_{z,e} \varphi_t(z,e)^{-\eta}+o(1)\\
&=-D\alpha_t^{-d}\sum_{z,e} \Big(\int_{[0,1]^d}\varphi\Big(\frac{z+y}{\alpha_t},e\Big)\Big)^{-\eta}\,\d y+o(1)\geq-D\sum_e\int_G \varphi(y,e)^{-\eta}\,\d y+o(1).
\end{align*}
Now we prove \eqref{ldp-aux-conductances}. To estimate the asymptotic probability of the event $\beta_ta\in A(B_t,\varphi_t,\delta\alpha_t^{-2})$ instead of $A(B_t,\varphi_t,\delta)$, we need the additional technical condition on the existence of an increasing density for small conductances, which we put in Theorem~\ref{thm:ldp-growing-good}. Under this assumption, we may easily estimate for some $C\in(0,\infty)$, any $x\in(m,M)$ and all sufficiently large $t$,
$$
\Pr\big(x-\alpha_t^{-2}\delta\leq a(z,e)\leq x\big)\geq \frac {C}{\alpha_t^2}\Pr\big(x-\delta\leq a(z,e)\leq x\big)
$$ 
Using this in what we proved so far, i.e., in \eqref{ldp-aux-conductances}  with $\delta\alpha_t^{-2}$ replaced by $\delta$, we obtain
$$
\log\Pr\big(\beta_ta\in A(B_t,\varphi_t,\delta\alpha_t^{-2})\big)\geq \log\big(\sfrac 1C\alpha_t^{-2d\vert\alpha_t G\vert}\big)+\log\Pr\big(\beta_ta\in A(B_t,\varphi_t,\delta)\big).
$$ 
Since obviously $\log\big(\alpha_t^{-2d\vert\alpha_tG\vert}\big)=o(\beta_t^\eta\alpha_t^d)$, we arrive at the desired result.
\end{proof}

For the proof of the upper bound in Theorem~\ref{thm:ldp-growing-good} in Section~\ref{sec-proofThmgoodupper} below, we will need also a large-deviations statement about the rate function of the conductances, applied to the rescaled conductances themselves. Recall the rescaled conductance field $a_t(y,e)=\beta_t a(\lfloor\alpha_ty\rfloor,e)$ from \eqref{omegatdef} for $y\in G,\,e\in\Ncal$.

\begin{lemma}\label{lem:ubound-environment}
Fix some scale function $\beta_t\gg 1$. Then, for any $\eps>0$, we have
\begin{equation*}
\limsup_{t\to\infty}\frac 1{\alpha_t^d\beta_t^\eta}\log\Pr\Big(\sum_{e\in\Ncal}\int_G(a_t(y,e))^{-\eta}\,\d y\geq \eps\Big)\leq -D\eps.
\end{equation*}
\end{lemma}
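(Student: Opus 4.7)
The plan is to reduce the integral statement to a sum of i.i.d.\ contributions and then apply an exponential Chebyshev estimate with exponent approaching the critical value $D$.

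First I would convert the integral to a lattice sum. Since $a_t(y,e)=\beta_t a(\lfloor\alpha_ty\rfloor,e)$, changing variables $x=\alpha_ty$ gives
\[
\int_G a_t(y,e)^{-\eta}\,\d y=\beta_t^{-\eta}\alpha_t^{-d}\sum_{z\in\Z^d}a(z,e)^{-\eta}\,\big|(z+[0,1)^d)\cap\alpha_tG\big|.
\]
Writing $B_t^+=\{z\in\Z^d\colon (z+[0,1)^d)\cap\alpha_tG\neq\emptyset\}$, the event in question is therefore contained in
\[
\Big\{\sum_{e\in\Ncal}\sum_{z\in B_t^+}a(z,e)^{-\eta}\ge\eps\,\alpha_t^d\beta_t^\eta\Big\},
\]
and $|B_t^+|\le (1+o(1))|G|\alpha_t^d$ by the regularity of $\partial G$.

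Next I would check that the random variables $Y_{z,e}:=a(z,e)^{-\eta}$ have a finite exponential moment below the critical rate~$D$. Assumption~\ref{assumption:tails} gives $\log\Pr(Y_{z,e}\ge y)=\log\Pr(a(z,e)\le y^{-1/\eta})\sim -Dy$ as $y\to\infty$. Hence for every $0<s<D$ we have $\mu(s):=\E\big[\e^{sY_{0,e_1}}\big]<\infty$. Applying the exponential Chebyshev inequality to the independent family $(Y_{z,e})_{z\in B_t^+,\,e\in\Ncal}$,
\[
\Pr\Big(\sum_{e,z\in B_t^+}Y_{z,e}\ge\eps\alpha_t^d\beta_t^\eta\Big)\le \mu(s)^{d|B_t^+|}\e^{-s\eps\alpha_t^d\beta_t^\eta}.
\]
Taking logarithms and dividing by $\alpha_t^d\beta_t^\eta$ yields
\[
\frac{1}{\alpha_t^d\beta_t^\eta}\log\Pr(\cdots)\le \frac{d|B_t^+|}{\alpha_t^d\beta_t^\eta}\log\mu(s)-s\eps.
\]
Because $|B_t^+|=O(\alpha_t^d)$ and $\beta_t^\eta\to\infty$, the first term is $o(1)$. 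Thus $\limsup_{t\to\infty}\frac{1}{\alpha_t^d\beta_t^\eta}\log\Pr(\cdots)\le -s\eps$ for every $s<D$, and letting $s\uparrow D$ gives the claimed bound $-D\eps$.

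I do not expect any serious obstacle here; the argument is essentially a moderate-deviation Chernoff bound exploiting the fact that the rescaling by $\beta_t^\eta$ pushes us well past the scale of a standard Cramér bound, so the MGF contribution is asymptotically negligible. The only mild care needed is in the boundary bookkeeping (controlling $|B_t^+|$ via the regularity of $\partial G$) and in verifying that the tail asymptotics from Assumption~\ref{assumption:tails} translate, as stated above, into $\mu(s)<\infty$ for all $s<D$, which is immediate from writing $\mu(s)=1+s\int_0^\infty\e^{sy}\Pr(Y_{0,e_1}>y)\,\d y$ and using the asymptotic $\Pr(Y_{0,e_1}>y)\le \e^{-(D-\delta)y}$ valid for all large $y$ and any $\delta>0$.
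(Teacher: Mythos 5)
Your proof is correct and follows essentially the same route as the paper: exponential Chebyshev with a tilting parameter $s<D$, finiteness of the moment generating function $\E[\e^{s a^{-\eta}}]$ deduced from Assumption~\ref{assumption:tails}, and the observation that the $O(\alpha_t^d)\log\mu(s)$ term is killed by the factor $\beta_t^\eta\to\infty$ on the scale $\alpha_t^d\beta_t^\eta$, after which one lets $s\uparrow D$. The only cosmetic difference is that you spell out the change of variables to a lattice sum with the cell-volume weights and introduce $B_t^+$ explicitly, whereas the paper absorbs this into a constant $C$; the substance is identical.
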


\begin{proof}
Choose some positive $x<D$. By the exponential Chebychev inequality, 
\begin{equation*}
\Pr\Big(\sum_e\int_G(a_t(y,e))^{-\eta}\,\d y\geq \eps\Big)\leq\e^{-\alpha_t^d\beta_t^\eta x\eps}\Big\langle\exp\Big\{\alpha_t^d\beta_t^\eta x\sum_e\int_G(a_t(y,e))^{-\eta}\,\d y\Big\} \Big\rangle.
\end{equation*}
Therefore, it will be sufficient to show that
\begin{equation}
\limsup_{t\to\infty}\frac 1{\alpha_t^d\beta_t^\eta}\log\Big\langle\exp\Big\{\alpha_t^d\beta_t^\eta x\sum_e\int_G(a_t(y,e))^{-\eta}\,\d y\Big\} \Big\rangle\leq 0.
\end{equation}
We make use of the independence of conductances over edges and obtain after rescaling
\begin{align*}
\Big\langle\exp\Big\{\alpha_t^d\beta_t^\eta x\sum_e\int_G(a_t(y,e))^{-\eta}\,\d y\Big\}\Big\rangle&\leq\Big\langle\exp\Big\{\beta_t^\eta x\sum_{e}\sum_{z\in\alpha_tG\cap\Z^d}(\beta_ta(z,e))^{-\eta}\Big\}\Big\rangle \leq\Big\langle\e^{xa^{-\eta}}\Big\rangle^{C\alpha_t^d}
\end{align*}
for some constant $C>0$ with $a=a(0,e_1)$ representing a single conductance. Consequently, it will now be sufficient to show that $\langle\e^{xa^{-\eta}}\rangle<\infty$ for $x<D$. This is implied by Assumption \ref{assumption:tails}. Indeed, with some bounded residual term $r$ such that $r(s)\to 0$ as $s\to\infty$,
\begin{align*}
\langle\e^{xa^{-\eta}}\rangle&=\int_0^\infty\Pr\big(\e^{xa^{-\eta}}>s\big)\,\d s\leq b+\int_b^\infty\Pr\big(a<(\log s)^{-1/\eta}x^{1/\eta}\big)\,\d s\\
&=b+\int_b^\infty\exp\big\{-(D/x)(\log s)[1+r(s)]\big\}\,\d s
\end{align*}
for arbitrary $b>0$. Choosing $b$ so large that $(D/x)[1+r(s)]>c$ for all $s>b$ and some $c>1$, we arrive at
$\langle\e^{xa^{-\eta}}\rangle\leq b+\int_b^\infty s^{-c}\,\d s<\infty$.
\end{proof}

\section{Proof of Theorem \ref{thm:ldp-growing-good}}\label{sec:ldp-growing-proofs}

In this section, we assemble the results from the previous sections and prove Theorem \ref{thm:ldp-growing-good}. 
Recall that we are working on the space $\Fcal=\{f^2\colon f\in L^2(G),\,\Vert f\Vert_2=1\}$, equipped with the weak topology of integrals against bounded continuous functions $G\to\R$.

\subsection{Compactness of the level sets of $\boldsymbol{J^{\ssup{\rm c}}}$.}\label{subsec:ldp-growing}

\noindent Let us show that the level sets 
$$
I_s=\{f^2\in\Fcal\colon J^{\ssup{\rm c}}(f^2)\leq s\},\qquad s\in[0,\infty),
$$
of $J^{\ssup{\rm c}}$ are compact. To that end, choose $s\geq 0$ and some sequence $(f_n)_{n\in\N}$ in $I_s$. Abbreviate $p=\frac{2\eta}{\eta+1}$. We need to show the existence of some $f\in I_s$ such that, along some subsequence, 
\begin{equation}\label{eqn-WeakConvergence}
\int_G f_n^2(y)V(y)\,\d y\rightarrow \int_G f(y)^2V(y)\,\d y\qquad\text{ as }t\to\infty
\end{equation}
for all $V\colon G\to\R$ bounded and continuous. As $\Fcal$ is bounded in $L^2(G)$, the Banach-Alaoglu theorem implies that there exists $f\in L^2(G)$ such that 
\begin{equation}
\int_G f_n(y)V(y)\,\d y\rightarrow \int_G f(y)V(y)\,\d y\qquad\text{ as }t\to\infty
\end{equation}
for all $V\in L^2(G)$, after choosing a subsequence. By H\"older's inequality and boundedness of the test functions, this implies \eqref{eqn-WeakConvergence} for some subsequence. Thus, it remains to show that $f\in I_s$. For the requirement that $\Vert f\Vert_2=1$, it is necessary to show convergence of $f_n$ in the strong $L^2(G)$-sense. This is implied by the Rellich-Kondrashov theorem (Theorem \ref{thm:rellich-kondrashov}) in analogy with Section \ref{sec:varprobs}. At this point, we need the restrictions on the parameter $\eta$ made in Theorem \ref{thm:ldp-growing-good} ($\eta>d/2$ and if $d=1$, $\eta\geq 1$).

The requirement that $J^{\ssup{\rm c}}(f^2)\leq s$ still needs to be verified. Let $i\in\{1,\ldots,d\}$. As the sequence $(\partial_if_n)_{n\in\N}$ is bounded in $L^p(G)$, we may assume that it converges weakly (that is, with respect to integrals against functions $V\in L^q(G)$ where $1/p+1/q=1$) against some $g_i\in L^p(G)$. As all norms are lower semicontinuous with respect to the weak topology, we have $\sum_{i=1}^d\Vert g_i\Vert_p^p\leq s$. Since $J^{\ssup{\rm c}}(f^2)=\sum_{i=1}^d\Vert\partial_if\Vert_p^p$, the assertion is shown if only $\partial_if=g_i$ for all $i\in\{1,\ldots,d\}$. In order to show this, choose some $V\in \Ccal^\infty_0(G)\subset L^q(G)$. On the one hand, 
$$
\int_G \partial_if_n(y)V(y)\,\d y\underset{n\to\infty}\rightarrow \int_G g_i(y)V(y)\,\d y.
$$ 
On the other hand, 
$$
\int_G f_n(y)\partial_i V(y)\,\d y\underset{n\to\infty}\rightarrow \int_G f(y)\partial_iV(y)\,\d y
$$ 
as $\partial_iV\in L^2(G)$ and $f_n\to f$ weakly in $L^2(G)$. The limits above imply (by the definition of the weak derivative) 
$$
\int_G g_i(y)V(y)\,\d y=\int_G \partial_if(y)V(y)\,\d y
$$ 
for all $V\in \Ccal^\infty_0(G)$. This shows $\partial_if=g_i$ for all $i\in\{1,\ldots,d\}$ as both functions are elements of $L^p(G)$, and $\Ccal^\infty_0(G)$ is dense in $L^q(G)$. This means the level sets $I_s$ of $J^{\ssup{\rm c}}$, and therefore those of $J^{\ssup{\rm c}}_0$, are compact.

\subsection{Proof of Theorem \ref{thm:ldp-growing-good}, lower bound.}

Let us go on with the proof of the lower bound. We start by recalling an auxiliary result from \cite{KSW11}. It ensures a certain continuity property of probabilities of certain events with regard to small changes of the conductances. 

\begin{lemma}\label{cor:LDP_density_estimate}
Let $\varphi,\psi\colon \Z^d\times\Ncal\to(0,\infty)$ with $0<\psi(x,e)-\eps\leq\varphi(x,e)\leq\psi(x,e)+\eps$ for some $\eps>0$ and all $x\in\Z^d$ and $e\in\Ncal$. Moreover, let $F$ be some event that depends on the process $(X_s)_{s\in[0,t]}$ up to time $t$ only. Then
$$
\P_0^\varphi\big(F\big)\geq\e^{-4d\eps t}\P_0^{\psi-\eps}\big(F\big).
$$
\end{lemma}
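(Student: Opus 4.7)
The natural approach is to compare the two path laws $\P_0^\varphi$ and $\P_0^{\psi-\eps}$ by means of their Radon-Nikodym density on $\{X_{[0,t]}\}$. Under a conductance configuration $c$, the walk jumps from $y$ to $y+e$ at rate $c(y,e)$ and the total rate out of $y$ is $\pi_y^c=\sum_{e\in\pm\Ncal}c(y,e)$. Standard computation for pure-jump Markov processes with dominating reference measure shows that, on a trajectory with consecutive positions $y_0=0,y_1,\ldots,y_N$ and jump times $0<\tau_1<\cdots<\tau_N\le t$,
\begin{equation*}
\frac{\d\P_0^\varphi}{\d\P_0^{\psi-\eps}}(X_{[0,t]})
=\prod_{k=1}^{N}\frac{\varphi(y_{k-1},y_k-y_{k-1})}{(\psi-\eps)(y_{k-1},y_k-y_{k-1})}\,\exp\!\Big\{-\int_0^t\big[\pi^\varphi_{X_s}-\pi^{\psi-\eps}_{X_s}\big]\,\d s\Big\}.
\end{equation*}
This is the main formula I would invoke; since the support of $\psi-\eps$ contains the support of $\varphi$ by hypothesis (and everything is strictly positive), the density is well defined.

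Next I would deploy the hypothesis twofold. First, $\varphi(x,e)\geq\psi(x,e)-\eps$ implies that every single factor in the product over jumps is $\ge1$, so the product is bounded below by $1$ pathwise. Second, $\varphi(x,e)\leq\psi(x,e)+\eps$ gives, for each vertex $y$,
\begin{equation*}
\pi^\varphi_y-\pi^{\psi-\eps}_y=\sum_{e\in\pm\Ncal}\big[\varphi(y,e)-(\psi(y,e)-\eps)\big]\le 2d\cdot 2\eps=4d\eps,
\end{equation*}
where the factor $2d$ counts the $2d$ neighbours of $y$ in $\Z^d$ (i.e., the set $\pm\Ncal$), and each summand is at most $2\eps$. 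Hence the exponential factor is bounded below by $\e^{-4d\eps t}$ uniformly in the trajectory.

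Combining the two bounds yields $\d\P_0^\varphi/\d\P_0^{\psi-\eps}\ge \e^{-4d\eps t}$ pointwise on path space, and integrating over the event $F$ (which is measurable with respect to $X_{[0,t]}$) gives
\begin{equation*}
\P_0^\varphi(F)=\E_0^{\psi-\eps}\Big[\frac{\d\P_0^\varphi}{\d\P_0^{\psi-\eps}}\1_F\Big]\ge \e^{-4d\eps t}\,\P_0^{\psi-\eps}(F),
\end{equation*}
as required. There is essentially no obstacle here; the only care needed is in the bookkeeping of neighbours ($\pm\Ncal$, giving the factor $2d$ rather than $d$) and in verifying that the pathwise density formula applies, which is standard since both laws are mutually absolutely continuous on $\{X_{[0,t]}\}$ once all conductances are strictly positive.
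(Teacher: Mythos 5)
The paper does not actually prove this lemma; it is imported verbatim from [KSW11] (König--Salvi--Wolff 2012), where it is established by exactly the Radon--Nikodym comparison you employ. Your proof is correct and matches the standard argument: the pathwise likelihood ratio between two finite-activity jump Markov chains with everywhere-positive rates is the product of jump-rate ratios times the exponential of (minus) the integrated difference of total exit rates; the hypothesis $\varphi\ge\psi-\eps$ makes every jump factor $\ge 1$, and $\varphi\le\psi+\eps$ gives $\pi^\varphi_y-\pi^{\psi-\eps}_y\le 4d\eps$ since each vertex has $2d$ incident edges, each contributing at most $2\eps$. One small notational caveat: $\varphi$ is defined on $\Z^d\times\Ncal$ with $\Ncal=\{e_1,\dots,e_d\}$, so for a jump in direction $-e_i$ the relevant conductance is $\varphi(y_{k-1}-e_i,e_i)$ rather than $\varphi(y_{k-1},-e_i)$, and similarly $\pi^\varphi_y=\sum_{i=1}^d[\varphi(y,e_i)+\varphi(y-e_i,e_i)]$; this is precisely the bookkeeping you flag with the informal $\pm\Ncal$ shorthand, and it does not affect the bound. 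No gap.
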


With this tool at hand, we now turn to the proof of the lower bound in Theorem \ref{thm:ldp-growing-good}. Choose an open set $\Ocal$ in $\Fcal$ with respect to the weak topology and some function $f^2\in \Ocal$. Our goal is to prove \eqref{eqn:dvg-ldp-lowerLarge}. We will write just $\{L_t\in\Ocal\}$ for $ \{L_t\in\Ocal,\supp(\ell_t)\subset\alpha_t G\}$.

We may assume that $f^2\in H_0^1(G)\cap \Ocal$ since \eqref{eqn:dvg-ldp-lowerLarge} is trivial otherwise. For the same reason, it is possible to assume that $f\in W^{1,p}(G)$ with $p=\frac{2\eta}{\eta+1}$. By convolution with an appropriate mollifier and norming, we consequently obtain functions $f_\eps\in \Ccal^1_0(G)$ such that $f_\eps\to f$ as $\eps\searrow 0$ both in $H^1_0(G)$ and in $W^{1,p}(G)$. As $\Ocal$ is open in the weak $L^2$-topology, it is also open in the strong $L^2$-topology and therefore $f_\eps\in \Ocal$ for $\eps$ small enough. Let us fix such an $\eps>0$ and some $M>0$ and define
\begin{equation*}
\varphi_M^{(f,\eps)}(y,e)=M^{-1}\vee(D\eta)^{\frac 1{\eta+1}}\vert\partial_ef_\eps(y)\vert^{-\frac 2{\eta+1}} \wedge M
\end{equation*}
with the convention $0^{-\frac 2{\eta+1}}=\infty$. Note that this function is continuous in the first argument. In analogy with Section~\ref{sec:ldp-growing-prelim}, put 
$$
\varphi_t(z,e)=\int_{[0,1]^d}\varphi_M^{(f,\eps)}\Big(\frac{z+y}{\alpha_t},e\Big)\,\d y,\quad z\in B_t, e\in\Ncal.
$$
Choose some $\delta\in(0,M^{-1})$ and $\beta_t$ such that $\beta_t^\eta\alpha_t^d=t\beta_t^{-1}\alpha_t^{-2}$ (the condition $\alpha_t\ll t^{\frac d{d+2}}$ ensures $\beta_t\gg 1$). We restrict the expectation with respect to the conductances to the event where $\beta_ta$ lies in $A_t=A(B_t, \varphi_t,\delta\alpha_t^{-2})$, where we recall \eqref{Atdef}. We estimate
\begin{align}\label{eqn:box-lowerbound1}
\langle\P_0^a(L_t\in \Ocal)\rangle&\geq \langle\P_0^a(L_t\in \Ocal)\1_{\{\beta_ta\in A_t\}}\rangle\geq \inf_{\psi\in A_t}\P_0^{\beta_t^{-1}\psi}(L_t\in \Ocal)\Pr(\beta_ta\in A_t)\notag\\
&\geq \e^{-4dt\delta\alpha_t^{-2}\beta_t^{-1}}\P_0^{\beta_t^{-1}(\varphi_t-\delta\alpha_t^{-2})}(L_t\in \Ocal)\Pr(\beta_ta\in A_t),
\end{align}
where the last step is due to Lemma~\ref{cor:LDP_density_estimate}. Now, by Proposition~\ref{lem:rescaled-fix-ldp} (taking Remark~\ref{rem:ldp-rescaled-fix} into consideration) and Lemma~\ref{lem:lbound-environment}, we obtain (with our particular choice of $\beta_t$)
\begin{equation*}
\liminf_{t\to\infty}t^{-\frac\eta{1+\eta}}\alpha_t^{-\frac{d-2\eta}{1+\eta}}\log \langle\P_0^a(L_t\in \Ocal)\rangle\geq -\sum_{e}\int_G\Big(\varphi_M^{(f,\eps)}(y,e)\big(\partial_ef_\eps(y)\big)^2+D\varphi_M^{(f,\eps)}(y,e)^{-\eta}\Big)\,\d y-4d\delta.
\end{equation*}
As $\delta$ was chosen arbitrarily small, we may omit the last term in the above inequality. Moreover, the resulting scale is seen to be equal to $\gamma_t$ from Theorem \ref{thm:ldp-growing-good}. Then, it is quickly verified that
\begin{equation*}
\sum_{e}\int_G\Big(\varphi_M^{(f,\eps)}(y,e)\big(\partial_ef_\eps(y)\big)^2+D\varphi_M^{(f,\eps)}(y,e)^{-\eta}\Big)\,\d y\rightarrow J^{\ssup{\rm c}}\big(f_\eps^2\big)
\end{equation*}
as $M\to\infty$ by applying the monotone and dominated convergence theorems on the parts of the integral where $\partial_ef_\eps$ is equal to $0$, between $0$ and $1$ and greater than $1$, respectively. Since $M$ was chosen arbitrarily,
\begin{equation*}
\liminf_{t\to\infty}\frac1{\gamma_t}\log \langle\P_0^a(L_t\in \Ocal)\rangle\geq J^{\ssup{\rm c}}\big(f_\eps^2\big).
\end{equation*}
Letting $\eps\searrow 0$, we may also conclude
\begin{equation*}
\liminf_{t\to\infty}\frac1{\gamma_t}\log \langle\P_0^a(L_t\in \Ocal)\rangle\geq J^{\ssup{\rm c}}(f^2)
\end{equation*}
as $\partial_ef_\eps\to\partial_ef$ in the $L^p$-norm. We arrive at the desired lower bound by taking the infimum over all functions $f\in H_0^1(G)\cap \Ocal$ remembering that $f$ was chosen arbitrarily in $\Ocal$.

\subsection{Proof of Theorem \ref{thm:ldp-growing-good}, upper bound.}\label{sec-proofThmgoodupper}

\noindent Let us now turn to the proof of the upper bound. Let $\Ccal$ be a closed set of probability densities on $G$. We will show that \eqref{eqn:dvg-ldp-upperLarge} holds, even when we replace the starting point $0$ by any other site $x\in B_t=\alpha_t G\cap \Z^d$, possibly depending on $t$, uniformly in $x$. Note that $L_t\in\Ccal$ is equivalent to $\frac 1t\ell_t\in\Ccal_t$, where 
\begin{equation}\label{rescaledC}
\Ccal_t=\{g^2\colon g\in\ell^2(B_t),\Vert g\Vert=1,\alpha_t^dg^2(\lfloor\alpha_t\cdot\rfloor)\in\Ccal\}
\end{equation}
is the set of rescalings of step functions in $\Ccal$. We now fix any starting point $x\in B_t=\alpha_t G\cap \Z^d$ and estimate the probability term with the help of \cite[Theorem 3.6]{BHK07}, which states that
\begin{equation}\label{eqn:uboundnonexit1}
\P^a_x\big(L_t\in\Ccal,\supp(\ell_t)\subset\alpha_tG\big)
=\P_x^a\big(\smfrac 1t\ell_t\in\Ccal_t,\supp(\ell_t)\subset B_t\big)
\leq \exp\Big\{-t \inf_{\mu\in\Ccal_t}\Lambda_a(B_t,\mu)\Big\} \e^{C_t},
\end{equation}
where we put
$$
\Lambda_a(B_t,\mu)=\sum_{x,y\in B_t\colon x\sim y}a_{x,y}\big(\sqrt{\mu(x)}-\sqrt{\mu(y)}\big)^2.
$$
Furthermore, $C_t$ is an error term that can be estimated as follows.
$$
C_t= |B_t| \log\big(\eta_{B_t}\sqrt{8\e} t)+\log|B_t|+\frac{|B_t|}{4t},
$$
where 
$$
\eta_{B_t}=\max\Big\{\max_{x\in B_t}\sum_{y\in B_t\setminus \{x\}} |\Delta^a_{x,y}|,\max_{y\in B_t}\sum_{x\in B_t\setminus \{y\}} |\Delta^a_{x,y}|,1\Big\}
$$ 
is bounded in $t$, since the conductances are, according to our assumptions. Furthermore, from our upper bound on $\alpha_t$ in Theorem~\ref{thm:ldp-growing-good}, we have that $\log t\ll\beta^{\eta}$; see \eqref{betadef}. This shows that the error term $C_t$ is negligible on the scale $\gamma_t=\alpha_t^d\beta_t^\eta$; see \eqref{gammatdef}.

Now we use H\"older's inequality to estimate, for $g^2=\mu\in\Ccal_t$ having support in $B_t$,
\begin{equation}\label{Hoelder}
\Lambda_a(B_t,\mu)=\sum_{e\in\Ncal}\sum_{z\in\Z^d}a(z,e)|g(z+e)-g(z)|^2
\geq \Big(\sum_{z,e}|g(z+e)-g(z)|^{\frac{2\eta}{\eta+1}}\Big)^{(\eta+1)/\eta}\Big(\sum_{z,e}\big(a(z,e)\big)^{-\eta}\Big)^{-1/\eta}.
\end{equation}
Recalling the rescaled conductance field $a_t(y,e)=\beta_t a(\lfloor y\alpha_t\rfloor, e)$ from \eqref{omegatdef} and introducing the notation
\begin{equation}\label{chiBCdef}
\chi^{\ssup{\rm d}}(B_t,\Ccal_t)=\inf_{g^2\in\Ccal_t}\sum_{e\in\Ncal}\sum_{z\in\Z^d}|g(z+e)-g(z)|^{\frac{2\eta}{\eta+1}},
\end{equation}
we see that
$$
\inf_{\mu\in\Ccal_t}\Lambda_a(B_t,\mu)\geq \frac 1{\beta_t\,\alpha_t^2}\Big(\alpha_t^{\frac{2\eta-d}{\eta+1}}\chi^{\ssup{\rm d}}(B_t,\Ccal_t)\Big)^{(\eta+1)/\eta}\Big(\sum_e\int_G\big(a_t(y,e)\big)^{-\eta}\,\d y\Big)^{-1/\eta}.
$$
Pick some small $\delta>0$. By Lemma \ref{lem:tilted-rescaling} below, we have, for all $t$ large enough,
\begin{equation}\label{eqn:uboundnonexit2}
\inf_{\mu\in\Ccal_t}\Lambda_a(B_t,\mu)\geq \frac1{\beta_t\alpha_t^{2}}\big[\chi^{\ssup{\rm c}}(G,\Ccal)-\delta\big]^{(\eta+1)/\eta}\Big(\sum_e\int_G\big(a_t(y,e)\big)^{-\eta}\,\d y\Big)^{-1/\eta}.
\end{equation}
Choose now a large positive number $M$ and some small $\eps>0$ and define on the environment space of measurable non-negative functions $G\times\Ncal\to(0,\infty)$, the events
\begin{eqnarray}
A_n&=&\Big\{\varphi\colon \sum_e\int_G\varphi(y,e)^{-\eta}\,\d y \in ((n-1)\eps,n\eps]\Big\},\qquad n\in\N, n\leq M/\eps,\label{Asetdef}\\
B_1&=&\Big\{\varphi\colon\sum_e\int_G\varphi(y,e)^{-\eta}\,\d y\geq M\Big\}
\qquad\mbox{and}\qquad
B_2=\Big\{\varphi\colon\sum_e\int_G\varphi(y,e)^{-\eta}\,\d y\leq\eps\Big\}.\label{Bsetsdef}
\end{eqnarray}
We proceed by combining \eqref{eqn:uboundnonexit1} and \eqref{eqn:uboundnonexit2} and splitting the expectation w.r.t.~the environment as
$$
\begin{aligned}
\P^a_0\big(L_t\in\Ccal,\supp(\ell_t)\subset\alpha_tG\big)
&\leq \Pr(a_t\in B_1) +\sum_{n=1}^{M/\eps}\e^{-t\beta_t^{-1}\alpha_t^{-2}\big[\chi^{\ssup{\rm c}}(G,\Ccal)-\delta\big]^{(\eta+1)/\eta}(n\eps)^{-1/\eta}}\Pr(a_t\in A_n)\\
&\quad+\e^{-t\beta_t^{-1}\alpha_t^{-2}\big[\chi^{\ssup{\rm c}}(G,\Ccal)-\delta\big]^{(\eta+1)/\eta}\eps^{-1/\eta}}\Pr(a_t\in B_2).
\end{aligned}
$$
For the environment terms, we use Lemma \ref{lem:ubound-environment} to calculate their asymptotic behavior, noting that $t\beta_t^{-1}\alpha_t^{-2}=\beta_t^\eta\alpha_t^d,
$, by the choice of $\beta_t$ in \eqref{betadef}. The condition $\alpha_t\ll t^{\frac \eta{d(\eta+1)}}$ ensures that $\beta_t\gg 1$. Noting the definition of $\gamma_t$ in \eqref{gammatdef}, this means that
$$
\begin{aligned}
\limsup_{t\to\infty}&\gamma_t^{-1}\log\langle\P^a_0\big(L_t\in\Ccal,\supp(\ell_t)\subset\alpha_tG\big)\rangle\\
&\leq -DM \, \vee\max_n\Big[ -\big[\chi^{\ssup{\rm c}}(G)-\delta\big]^{\frac{\eta+1}\eta}(n\eps)^{-\frac 1\eta}-D((n-1)\eps)\Big]\vee -\big[\chi^{\ssup{\rm c}}(G,\Ccal)-\delta\big]^{\frac{\eta+1}\eta}\eps^{-\frac1\eta}\\
&\leq -DM \, \vee
\sup_{y\in(\eps,M)} \Big[-\big[\chi^{\ssup{\rm c}}(G)-\delta\big]^{\frac{\eta+1}\eta}y^{-\frac1\eta}-Dy\Big]+D\eps\vee -\big[\chi^{\ssup{\rm c}}(G)-\delta\big]^{\frac{\eta+1}\eta}\eps^{-\frac1\eta}.
\end{aligned}
$$
Optimizing over $y$ after choosing $M$ large enough and $\eps$ small enough, and finally taking limits $\delta\to 0$ and $\eps\to 0$, yields the desired result.

\begin{lemma}\label{lem:tilted-rescaling}
Let $\eta>d/2$. Fix a closed subset $\Ccal$ of $\Fcal$ with rescaled version $\Ccal_t$ defined in \eqref{rescaledC}. Then we have 
$$
\liminf_{t\to\infty}\alpha_t^{\frac{2\eta-d}{\eta+1}}\chi^{\ssup{\rm d}}(B_t,\Ccal_t)\geq \chi^{\ssup{\rm c}}(G,\Ccal).
$$
\end{lemma}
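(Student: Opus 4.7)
The plan is to lift a near-minimising sequence for $\chi^{\ssup{\rm d}}(B_t,\Ccal_t)$ into continuous functions on $G$ via a Kuhn finite-element interpolation, then invoke Rellich-Kondrashov compactness and lower semicontinuity of the $L^p$-norm of the gradient. Abbreviate $p=\frac{2\eta}{1+\eta}$; the hypothesis $\eta>d/2$ is equivalent to $p>\frac{2d}{d+2}$, precisely the threshold making the embedding $W_0^{1,p}(G)\hookrightarrow L^2(G)$ compact. If the liminf on the left-hand side is $+\infty$ there is nothing to show; otherwise, pass to a subsequence (still denoted by $t$) along which the quantity converges to a finite limit.

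Pick $g_t\in\ell^2(B_t)$ with $g_t^2\in\Ccal_t$ and near-minimising,
\begin{equation*}
\sum_{z\in\Z^d,\,e\in\Ncal}|g_t(z+e)-g_t(z)|^p=\chi^{\ssup{\rm d}}(B_t,\Ccal_t)+o(1).
\end{equation*}
Let $h_t\in H_0^1(G)$ be the piecewise-linear Kuhn-simplicial interpolant of $\alpha_t^{d/2}g_t(\lfloor\alpha_t\cdot\rfloor)$, exactly as constructed in the proof of Lemma~\ref{lem:eigenvalues-converge}. An edge-by-edge bookkeeping of the simplicial contributions to $\|\partial_e h_t\|_p^p$ yields the exact identity
\begin{equation*}
\sum_{e\in\Ncal}\|\partial_e h_t\|_p^p=\alpha_t^{p(1+d/2)-d}\sum_{z,e}|g_t(z+e)-g_t(z)|^p+o(1)=\alpha_t^{(2\eta-d)/(\eta+1)}\chi^{\ssup{\rm d}}(B_t,\Ccal_t)+o(1),
\end{equation*}
the $o(1)$ absorbing contributions from simplices meeting $\partial(\alpha_tG)$, whose combined volume is $O(\alpha_t^{-1})$ by the regularity of $\partial G$.

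Compare $h_t$ with the step function $f_t(y):=\alpha_t^{d/2}g_t(\lfloor\alpha_ty\rfloor)\1_G(y)$, which by construction of $\Ccal_t$ satisfies $f_t^2\in\Ccal$. As in \eqref{eqn:interpolation-residual}, the interpolation residual obeys $\|h_t-f_t\|_2^2\lesssim\sum_{z,e}|g_t(z+e)-g_t(z)|^2$; the inclusion $\ell^p\subset\ell^2$ combined with the bound on the rescaled $p$-energy gives $\|h_t-f_t\|_2^2=O(\alpha_t^{-(2\eta-d)/\eta})\to 0$, and hence $\|h_t\|_2\to 1$. The sequence $(h_t)$ is therefore bounded in $W_0^{1,p}(G)$, and Rellich-Kondrashov (Theorem~\ref{thm:rellich-kondrashov}, applied as in the proof of Lemma~\ref{lem:varprob-cont-solvable}) delivers a further subsequence with $h_t\to h$ strongly in $L^2(G)$ and $\partial_e h_t\rightharpoonup\partial_e h$ weakly in $L^p(G)$, for some $h\in W_0^{1,p}(G)$ with $\|h\|_2=1$. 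Since $f_t-h_t\to 0$ in $L^2$, also $f_t^2\to h^2$ strongly in $L^1(G)$, in particular in the weak topology of $\Fcal$; closedness of $\Ccal$ then forces $h^2\in\Ccal$, so $h$ is an admissible competitor in the variational problem defining $\chi^{\ssup{\rm c}}(G,\Ccal)$. Lower semicontinuity of $\|\cdot\|_p$ under the weak convergence in $L^p(G)$ yields
\begin{equation*}
\chi^{\ssup{\rm c}}(G,\Ccal)\le\sum_{e\in\Ncal}\|\partial_e h\|_p^p\le\liminf_{t\to\infty}\sum_{e\in\Ncal}\|\partial_e h_t\|_p^p=\liminf_{t\to\infty}\alpha_t^{(2\eta-d)/(\eta+1)}\chi^{\ssup{\rm d}}(B_t,\Ccal_t),
\end{equation*}
as claimed.

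The main obstacle is the exact identity relating the continuous $p$-energy of the finite-element interpolant to the rescaled discrete $p$-energy: a crude estimate would recover it only up to a dimension-dependent constant and degrade the bound by a multiplicative factor. The identity rests on a combinatorial cancellation in the Kuhn decomposition (each coordinate lattice edge occurs in the simplices with the correct total multiplicity $d!$ so as to recover constant $1$), precisely the same mechanism implicit in the $L^2$-version used in Lemma~\ref{lem:eigenvalues-converge}, together with careful accounting of boundary simplices, which remain of lower order by the regularity of $\partial G$.
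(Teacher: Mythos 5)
Your proposal is correct and follows essentially the same route as the paper's proof: lift a (near-)minimiser $g_t$ of $\chi^{\ssup{\rm d}}(B_t,\Ccal_t)$ by Kuhn piecewise-linear interpolation, read off the exact scaling identity for the $p$-energy, show the interpolant stays $L^2$-close to the step function $\alpha_t^{d/2}g_t(\lfloor\alpha_t\cdot\rfloor)$, and then apply Rellich-Kondrashov plus weak lower semicontinuity of $\sum_e\|\partial_e\cdot\|_p^p$, using closedness of $\Ccal$ to keep the limit inside the admissible set. If anything, two of your details are slightly tidier than the paper's write-up: you extract $\chi^{\ssup{\rm d}}(B_t,\Ccal_t)\to 0$ directly from the (assumed finite) liminf rather than via the separate Lemma~\ref{lem:varprob-disc-not-solvable} on $\chi^{\ssup{\rm d}}(B_t)$, and you correctly attribute the $p$-energy identity to the combinatorial rearrangement over Kuhn simplices (the paper's intermediate claim that $\partial_e f_t$ is a.e.\ constant on lattice cubes is not literally true for $d\ge 2$, though the resulting integral identity is, and is in fact exact rather than up to $o(1)$).
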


\begin{proof}
We may assume that $\Ccal_t$ is nonempty. Pick minimisers $g_t\in\Ccal_t$ of the formula for $\chi^{\ssup{\rm d}}(B_t,\Ccal_t)$ in \eqref{chiBCdef} such that 
\begin{equation}
\chi^{\ssup{\rm d}}(B_t,\Ccal_t)=\sum_{e\in\Ncal}\sum_{z\in\Z^d}\vert g_t(z+e)-g_t(z)\vert^{\frac{2\eta}{\eta+1}}.
\end{equation}
Let us consider the rescaled versions $\widetilde f_t\in L^2(G)$ defined as 
$$
\widetilde f_t(y)=\alpha_t^{d/2} g_t (\lfloor \alpha_t y \rfloor).
$$ 
Note that $\widetilde f_t\in\Ccal$. Due to norming of the sequence $\widetilde f_t$ and closedness of $\Ccal$, we find $f\in\Ccal$ such that $\widetilde f_t\to f$ in the weak $L^2$-sense, which in turn implies convergence in the weak topology we are considering. Let us show that
$$
\liminf_{t\to\infty}\alpha_t^{\frac{2\eta-d}{\eta+1}}\chi^{\ssup{\rm d}}(B_t,\Ccal_t)\geq \sum_{e\in\Ncal}\int_{\R^d}\vert\partial_e f(y)\vert^{\frac{2\eta}{\eta+1}}\,\d y,
$$
which instantly yields the desired result. In analogy with the construction in Lemma \ref{lem:eigenvalues-converge}, we find functions $f_t\in H_0^1(G)$ (trivially extended to $\R^d$) such that for almost all $y\in G$, $e\in\Ncal$ and $t>0$,
\begin{equation}
\partial_e f_t(y)=\alpha_t^{1+d/2}\big[g_t(\lfloor\alpha_ty\rfloor+e)-g_t(\lfloor\alpha_ty\rfloor)\big].
\end{equation}
In particular, $\partial_ef_t$ is almost everywhere constant on the boxes $\alpha_t^{-1}(z+[0,1]^d)$ with $z\in\Z^d$, thus
$$
\begin{aligned}
\alpha_t^{\frac{2\eta-d}{\eta+1}}\sum_{e\in\Ncal}\sum_{z\in\Z^d} 
\vert g_t(z+e)-g_t(z)\vert^{\frac{2\eta}{\eta+1}}
&=\alpha_t^{\frac{2\eta-d}{\eta+1}}\alpha_t^d\sum_{e\in\Ncal}\int_{\R^d}\big(\alpha_t^{-1-d/2}\vert\partial_e f_t(y)\vert\big)^{\frac{2\eta}{\eta+1}}\,\d y\\
&=\sum_{e\in\Ncal}\int_{\R^d}\vert\partial_e f_t(y)\vert^{\frac{2\eta}{\eta+1}}\,\d y.
\end{aligned}
$$
It therefore remains to show that
\begin{equation}\label{eqn:tilted-rescaling-0}
\liminf_{t\to\infty}\sum_{e\in\Ncal}\int_{\R^d}\vert\partial_e f_t(y)\vert^{\frac{2\eta}{\eta+1}}\,\d y\geq
\sum_{e\in\Ncal}\int_{\R^d}\vert\partial_e f(y)\vert^{\frac{2\eta}{\eta+1}}\,\d y.
\end{equation}
To that end, we need to establish weak convergence of the $f_t$ towards $f$ and convergence to $1$ of their $L^2$-norms. Then, \eqref{eqn:tilted-rescaling-0} follows from lower semicontinuity of the functional $f^2\mapsto \sum_{e\in\Ncal}\|\partial_e f\|_p^p$ (with $p=\frac{2\eta}{1+\eta}$), which follows from the compactness of the level sets of  $J^{\ssup{\rm c}}$. Here, we require the assumptions made on the value of $\eta$. According to \eqref{eqn:interpolation-residual}, we obtain the desired convergence properties and even $\Vert f_t-\widetilde f_t\Vert\to 0$ if
\begin{equation}\label{eqn:tilted-rescaling-1}
\sum_{e\in\Ncal}\sum_{z\in\Z^d} 
\vert g_t(z+e)-g_t(z)\vert^2\to 0\quad\text{as }t\to\infty.
\end{equation}

As we are on a discrete space and consider normed functions, we may estimate
\begin{equation}\label{eqn:tilted-rescaling-2}
\sum_{e\in\Ncal}\sum_{z\in\Z^d} \vert g_t(z+e)-g_t(z)\vert^2
\leq C\sum_{e\in\Ncal}\sum_{z\in\Z^d} \vert g_t(z+e)-g_t(z)\vert^{\frac{2\eta}{\eta+1}}
=C\chi^{\ssup{\rm d}}(B_t),
\end{equation}
for some $C>0$. As $G$ is open, it contains the box $[-\delta,\delta]^d$ with some $\delta>0$. With $Q_t=\alpha_t[-\delta,\delta]^d\cap\Z^d$, we have $\chi^{\ssup{\rm d}}(B_t)\leq\chi^{\ssup{\rm d}}(Q_t)$. By Lemma~\ref{lem:varprob-disc-not-solvable}, the latter vanishes as $t\to\infty$. Hence, \eqref{eqn:tilted-rescaling-2} implies \eqref{eqn:tilted-rescaling-1}, and the proof of Lemma~\ref{lem:tilted-rescaling} is complete.
\end{proof}

\section{Proof of Theorem \ref{thm:ldp-growing-bad}}\label{sec-proofThmBad}

Let us turn to the case where $\eta\leq d/2$. 

\subsection{Non-compactness of levels sets of $\boldsymbol{J^{\ssup{\rm c}}}$}

\noindent We start by showing that the level sets fail to be compact in this case. This property seems obvious after studying the variational problems in Section \ref{sec:varprobs}, but let us provide a rigorous proof.

\begin{lemma}\label{lem:level-sets-not-closed}
If $\eta\leq d/2$, the level sets of $J^{\ssup{\rm c}}$ are not closed. In particular, they are not compact.
\end{lemma}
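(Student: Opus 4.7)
The plan is, for any $s>0$, to exhibit a point $g^2\in\Fcal\setminus I_s$ that lies in the weak closure of $I_s=\{f^2\in\Fcal\colon J^{\ssup{\rm c}}(f^2)\leq s\}$. The simplest candidate I would use is the uniform density on $G$: let $g\equiv|G|^{-1/2}$ be the constant function on $G$, so $\|g\|_2=1$. Since $g$ does not vanish on $\partial G$, we have $g\notin H_0^1(G)$, hence $J^{\ssup{\rm c}}(g^2)=\infty$. Thus $g^2\in\Fcal$ while $g^2\notin I_s$, and the task reduces to approximating $g^2$ weakly by a sequence $h_n^2\in\Fcal$ whose $J^{\ssup{\rm c}}$-values tend to $0$.

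For the construction I would use a Riemann-sum style gluing of highly concentrated bumps. Partition $G$ into a grid of $N_n\sim n^d$ cubes $Q_j^{(n)}$ of side length $1/n$, pick a point $x_j^{(n)}\in Q_j^{(n)}$ from each, and place at each $x_j^{(n)}$ a translated copy $\phi_{j,n}$ of the concentrating family from the proof of Lemma~\ref{lem:varprob-cont-not-solvable}, normalised so that $\|\phi_{j,n}\|_2=1$ and supported in the ball $B(x_j^{(n)},\rho_n)\subset Q_j^{(n)}$ for some $\rho_n<1/(2n)$. Boundary cubes on which $B(x_j^{(n)},\rho_n)\subset G$ fails carry total $g^2$-mass tending to $0$ and can be discarded (with a negligible renormalisation). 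Since the remaining supports are pairwise disjoint, the function
$$
h_n=\frac{1}{\sqrt{N_n}}\sum_{j=1}^{N_n}\phi_{j,n}\in H_0^1(G)
$$
satisfies $\|h_n\|_2=1$, and with $p=2\eta/(\eta+1)\in(0,2)$ one computes
$$
J^{\ssup{\rm c}}(h_n^2)=N_n^{-p/2}\sum_j J^{\ssup{\rm c}}(\phi_{j,n}^2)\leq N_n^{1-p/2}\max_j J^{\ssup{\rm c}}(\phi_{j,n}^2),
$$
where $1-p/2=1/(\eta+1)>0$. Weak convergence $h_n^2\to g^2$ will then follow from a standard Riemann-sum calculation: for any $V\in\Ccal_{\rm b}(G)$,
$$
\int_G h_n^2 V\,\d y=\frac 1{N_n}\sum_j \int\phi_{j,n}^2 V\,\d y \approx \frac 1{N_n}\sum_j V(x_j^{(n)})\longrightarrow |G|^{-1}\int_G V\,\d y=\int_G g^2 V\,\d y,
$$
the approximation using concentration of the bumps and continuity of $V$. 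A diagonal selection of $\rho_n$ then secures $J^{\ssup{\rm c}}(h_n^2)\to 0$ and weak convergence simultaneously, so that $h_n^2\in I_s$ for all large $n$, yielding the desired non-closedness.

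The hard part will be ensuring that the bumps from Lemma~\ref{lem:varprob-cont-not-solvable} can be chosen with arbitrarily small support \emph{and} arbitrarily small $J^{\ssup{\rm c}}$-value simultaneously, so that the blow-up factor $N_n^{1/(\eta+1)}$ is overwhelmed by the decay of $J^{\ssup{\rm c}}(\phi_{j,n}^2)$. For $d\geq 2$ this is built directly into the explicit family $f_\eps$ in that proof, which is parametrised by the support radius $\eps$ and for which $J^{\ssup{\rm c}}(f_\eps^2)$ decays as a positive power of $\eps$. For $d=1$, where that construction varies a degree parameter $r$ at fixed support radius $\eps_0$, I would first translate and rescale so that the function is supported on an interval of length $\rho_n$, and then take $r=r(\rho_n)$ large enough that the $L^p$-norm of the derivative is as small as desired. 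The remaining technicalities --- negligible boundary mass, diagonal extraction, and uniform continuity of $V$ on compact subsets of $G$ --- are routine.
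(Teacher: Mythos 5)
Your proof is correct, but it takes a genuinely different (and considerably more elaborate) route than the paper's. The paper simply reuses the concentrating family $(f_\eps)$ from Lemma~\ref{lem:varprob-cont-not-solvable} directly: each $f_\eps^2$ lies in $I_s$ for $\eps$ small, $(f_\eps)$ converges weakly in $L^2$ to $0$, and since $J^{\ssup{\rm c}}$ is lower semicontinuous one concludes the limit has vanishing $L^2$-norm and hence is not in $\Fcal$, so a fortiori not in $I_s$. That argument is two lines long. Your construction — tiling $G$ with $N_n\sim n^d$ shrinking bumps, renormalising, and showing $h_n^2\to g^2=|G|^{-1}\1_G$ by a Riemann-sum argument while controlling $J^{\ssup{\rm c}}(h_n^2)\lesssim N_n^{1/(\eta+1)}\max_j J^{\ssup{\rm c}}(\phi_{j,n}^2)$ by taking $\rho_n$ small enough — is much heavier machinery.

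That said, your approach buys something the paper's does not cleanly deliver. For the concentrating family, $f_\eps^2$ converges in the weak topology on $\Fcal$ (test integrals against $\Ccal_{\rm b}(G)$) to $\delta_0$, \emph{not} to the square of the weak-$L^2$ limit $0$; the sequence therefore escapes $\Fcal$ entirely. This establishes non-compactness of $I_s$ but shows ``non-closedness'' only relative to a larger ambient space of measures. Your sequence $h_n^2$, by contrast, converges in the stated topology to $g^2=|G|^{-1}\1_G$, which \emph{is} an element of $\Fcal$ (so the limit exists in the ambient space) yet has $J^{\ssup{\rm c}}(g^2)=\infty$ since $g\notin H_0^1(G)$. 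This is a direct, unambiguous witness that $I_s$ is not closed in $\Fcal$. The computational details you flag (scaling exponent $N_n^{1/(\eta+1)}$, the one-dimensional construction, boundary cubes, diagonal selection) all check out and are routine as you say; one remark is that you do not need the bump radius $\rho_n$ to be of order $1/n$ — any $\rho_n\ll 1/n$ works for the Riemann sum, and taking $\rho_n$ a large negative power of $n$ is exactly what lets the decay of $J^{\ssup{\rm c}}(\phi_{j,n}^2)$ beat the factor $N_n^{1/(\eta+1)}$.
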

\begin{proof}
From Lemma \ref{lem:varprob-cont-not-solvable}, we obtain sequences $(f_n)$ with $f_n\in H_0^1(G)$, $\Vert f_n\Vert_2\equiv 1$ for $n\in\N$ and $J^{\ssup{\rm c}}(f_n^2)\to 0$ as $n\to\infty$. In particular, $f_n^2\in\Fcal$ and for each level set $I_s=\{f^2\colon J^{\ssup{\rm c}}(f^2)\leq s\}$, $s>0$, we have $f_n^2\in I_s$ for $n$ large enough. As the sequence $(f_n)$ is bounded in $L^2$, there exists a weak limit $f$. We easily check by H\"older's inequality that
\begin{equation*}
\int_G f_n^2(y)V(y)\,\d y\rightarrow \int_G f(y)^2V(y)\,\d y\qquad\text{ as }t\to\infty
\end{equation*}
for all bounded and continuous $V\colon G\to \R$, so $(f_n)$ converges in the right topology. By lower semicontinuity of norms with regard to weak convergence, $J^{\ssup{\rm c}}(f^2)=0$. That implies $\Vert f\Vert_2=0$ which in turn yields $f^2\notin\Fcal$. As in particular $f^2\notin I_s$, the assertion follows.
\end{proof}

\subsection{Proof of Theorem \ref{thm:ldp-growing-bad}, upper bound}

\noindent 
Now, we proceed by showing the main statement, that is,
$$
\limsup_{t\to\infty}t^{-\frac\eta{\eta+1}}\log\langle\P^a_0\big(\supp(\ell_t)\subset\alpha_tG\big)\rangle\leq -K_{\eta,D}\,\chi^{\ssup{\rm d}}(\Z^d).
$$ 
Using a spectral Fourier expansion and estimating in standard way, we estimate the probability term as 
\begin{equation}\label{eqn:uboundnonexit3}
\P^a_0\big(\supp(\ell_t)\subset\alpha_tG\big)\leq \vert\alpha_tG\vert^2\exp\{-t\lambda_1^{\ssup t}(a)\},
\end{equation}
where $\lambda_1^{\ssup t}(a)$ is the principal eigenvalue of the operator $\Delta^a$ in the box $B_t$ with zero boundary condition. Using its Rayleigh-Ritz representation and H\"older's inequality analogously to \eqref{Hoelder}, we see that
\begin{align}\label{eqn:Hoelder-eigenvalue2}
\lambda_1^{\ssup t}(a)&\geq\beta_t^{-1}\inf_g \Big(\sum_{z,e}|g(z+e)-g(z)|^{\frac{2\eta}{\eta+1}}\Big)^{(\eta+1)/\eta}\Big(\sum_{z,e}\big(\beta_ta(z,e)\big)^{-\eta}\Big)^{-1/\eta}\notag\\
&=\beta_t^{-1}\alpha_t^{-\frac d\eta}(\chi^{\ssup{\rm d}}(B_t))^{(\eta+1)/\eta}\Big(\sum_e\int_G\big(a_t(y,e)\big)^{-\eta}\,\d y\Big)^{-1/\eta}.
\end{align}
In contrast to the proof of the upper bound in Theorem~\ref{thm:ldp-growing-good}, we continue the inequality differently by just estimating  $\chi^{\ssup{\rm d}}(B_t)\geq\chi^{\ssup{\rm d}}(\Z^d)$. Choose now a large positive number $M$ and some small $\eps>0$ and consider the events $A_n$, $B_1$ and $B_2$ defined in \eqref{Asetdef} and \eqref{Bsetsdef}. We proceed by combining \eqref{eqn:uboundnonexit3} and \eqref{eqn:Hoelder-eigenvalue2} and splitting the expectation w.r.t. the environment as
$$
\begin{aligned}
\vert\alpha_tG\vert^{-2}\langle\P^a_0\big(\supp(\ell_t)\subset\alpha_tG\big)\rangle&\leq\Pr(a_t\in B_1)+\sum_{n=1}^{M/\eps}\e^{-t\beta_t^{-1}\alpha_t^{-\frac d\eta}\chi^{\ssup{\rm d}}(\Z^d)^{(\eta+1)/\eta}(n\eps)^{-1/\eta}}\Pr(a_t\in A_n)\\
&\qquad+\e^{-t\beta_t^{-1}\alpha_t^{-\frac d\eta}\chi^{\ssup{\rm d}}(\Z^d)^{(\eta+1)/\eta}\eps^{-1/\eta}}\Pr(a_t\in B_2).
\end{aligned}
$$
For the environment terms, we use Lemma \ref{lem:ubound-environment} to calculate their asymptotic probabilities, noting that 
$$
t\beta_t^{-1}\alpha_t^{-\frac d\eta}=\beta_t^\eta\alpha_t^d=t^{\frac \eta{\eta+1}},
$$ 
by the choice of $\beta_t$ in \eqref{betadef}. Again, the condition $\alpha_t\ll t^{\frac \eta{d(\eta+1)}}$ ensures that $\beta_t\gg 1$. The remainder of the proof is now similar to the analogous part of the proof of the upper bound in Theorem~\ref{thm:ldp-growing-good}, which we do not spell out.

\subsection{Proof of Theorem \ref{thm:ldp-growing-bad}, lower bound}

\noindent For  any finite and connected set $B\subset\Z^d$ containing the origin and any sufficiently large $t$, we simply use that $B\subset\alpha_t G$ and apply Theorem~\ref{thm:ldp-finite}, to obtain 
$$
\limsup_{t\to\infty}t^{-\frac\eta{\eta+1}}\log\langle\P^a_0\big(\supp(\ell_t)\subset\alpha_tG\big)\rangle \geq -K_{\eta,D}\chi^{\ssup{\rm d}}(B),
$$ 
which is exactly \eqref{eqn:ldp-growing-bad-lbound}. To obtain the better lower bound in \eqref{eqn:ldp-growing-bad-lbound-exact} in the special case $\eta=d/2$, we apply \eqref{eqn:ldp-growing-bad-lbound} for any $[-n,n]\cap\Z^d$ for any $n\in\N$. It therefore suffices to show that $\limsup_{n\to\infty}\chi^{\ssup{\rm d}}([-n,n]\cap\Z^d)\leq \chi^{\ssup{\rm d}}(\Z^d)$ in the case $\eta=d/2$. This was shown in Lemma \ref{lem:disc-varprob-converge}.

\section{Proof of Theorem \ref{thm:ldp-quenched}}\label{sec:localtimes-quenched}

As in the proof of the LDP in Proposition~\ref{lem:rescaled-fix-ldp} via the G\"artner-Ellis theorem, the main work in proving Theorem \ref{thm:ldp-quenched} consists in proving asymptotic rescaling properties of the principal $\ell^2(B_t)$-eigenvalue, but this time of the random operator $\alpha_t^2\Delta^a+V_t$ in $B_t=\alpha_tG\cap\Z^d$ for large $t$, where the rescaled version $V_t$ of a bounded and continuous function $V$ was defined in \eqref{eqn:vtdef}. This is done using methods from  the field of spectral homogenisation, which provides an answer to this question that actually extends to the full spectrum, not only the largest eigenvalue. Recall that $G=(0,1)^d$ is the open unit cube and that the conductances are uniformly elliptic, i.e., stay in $(\lambda,1/\lambda)$ almost surely for some $\lambda\in(0,1)$. 

In Section~\ref{sec:localtimes-quenched-homogenisation}, we modify a powerful existing result on spectral homogenisation of $\Delta^a$ to fit our needs. In Section~\ref{sec:localtimes-quenched-ldp}, we use the modified result for a proof of Theorem \ref{thm:ldp-quenched}.

\subsection{Spectral homogenisation in the random conductance model}\label{sec:localtimes-quenched-homogenisation}

Let us introduce a number of notations and recall some important facts. Recall that $c_\eff$ is the diffusion constant of the limiting Brownian motion that appears in the invariance principle for RWRC. Denote by $A=c_\eff\Id$ the covariance matrix corresponding to the Brownian motion.  For some function $V\in \Ccal_{\rm b}(G)$, the set of bounded and continuous real-valued functions on $G$, let us consider the operator 
$$
-\frac 12\nabla^\ast A\nabla+V=-\frac{c_\eff}2\Delta+V
$$ 
defined on the Sobolev space $H_0^1(G)$. By the spectral theorem for elliptic operators (compare e.g.~Zimmer \cite{Z90}), the spectrum of this operator is given by a sequence $\lambda_1(V)<\lambda_2(V)\leq\lambda_3(V)\leq\ldots$ of eigenvalues (counted according to their multiplicity) with corresponding $L^2$-normed eigenfunctions $v_1,v_2,\ldots\in \Ccal_0^\infty(G)$. For $t\geq 0$, let $\lambda_1^{\ssup t}(V)<\lambda_2^{\ssup t}(V)\leq\lambda_3^{\ssup t}(V)\leq\ldots$ denote the eigenvalues of $-\alpha_t^2\Delta^a+V_t$ on $\ell^2(B_t)$ with zero boundary condition, where $V_t$ is defined in \eqref{eqn:vtdef} above. Then, let $v_1^{\ssup t},v_1^{\ssup t},\ldots$ be the corresponding normed eigenfunctions. The values $\lambda_j^{\ssup t}(V)$ and functions $v_j^{\ssup t}$ in the case that $j$ is larger than the dimension of $\ell^2(B_t)$, say $j_0$, are of no importance and we just define them to be equal to $\lambda_{j_0}^{\ssup t}(V)$ resp. $v_{j_0}^{\ssup t}$.

\begin{theorem}[Spectral homogenisation]\label{thm:spectral-homogenisation}
Fix $V\in \Ccal_{\rm b}(G)$. Then, for each $j\in\N$, as $t\to\infty$,
\begin{equation}\label{eqn:spec-homog-ef-conv}
\lambda_j^{\ssup t}(V)\longrightarrow\lambda_j(V)\qquad \mbox{and}\qquad\big\Vert v_j^{\ssup t}-\alpha_t^{-d/2}v_j\big(\smfrac{\cdot}{\alpha_t+1}\big)\big\Vert_2\to 0.
\end{equation}
\end{theorem}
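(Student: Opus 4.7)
The plan is to reduce Theorem~\ref{thm:spectral-homogenisation} to a Mosco-type convergence of the Dirichlet forms
$$
\Ecal_t(u,u) = \alpha_t^2\sum_{z\in B_t,\,e\in\Ncal} a(z,e)(u(z+e)-u(z))^2 + \sum_{z\in B_t}V_t(z)u(z)^2
$$
on $\ell^2(B_t)$ towards $\Ecal(f,f)=\tfrac{c_\eff}{2}\|\nabla f\|_2^2+(Vf,f)$ on $H_0^1(G)$, and then invoke the Rayleigh--Ritz min-max formulae
$$
\lambda_j^{\ssup t}(V) = \inf_{\dim U=j}\,\sup_{u\in U,\,\|u\|_2=1}\Ecal_t(u,u),\qquad \lambda_j(V) = \inf_{\dim F=j}\,\sup_{f\in F,\,\|f\|_2=1}\Ecal(f,f),
$$
to transfer this to eigenvalue and eigenfunction convergence.

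For the upper bound $\limsup_t \lambda_j^{\ssup t}(V) \leq \lambda_j(V)$, the subtle point is that the naive discretization $\alpha_t^{d/2}v_i(\cdot/\alpha_t)$ of the continuous eigenfunctions $v_1,\ldots,v_j\in C_0^\infty(G)$ produces an averaged form with coefficient $\langle a\rangle\neq c_\eff$ in general. Instead, I would use the corrector-adjusted trial functions
$$
v_i^{\ssup t}(z) = \alpha_t^{d/2}\big[v_i(z/\alpha_t) + \alpha_t^{-1}\textstyle\sum_e\phi_e(z)(\partial_e v_i)(z/\alpha_t)\big],
$$
where $\phi_e$ denotes the stationary corrector of classical stochastic homogenisation. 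The quenched invariance principle is equivalent to the $L^2$-sublinearity of the $\phi_e$, which combined with Birkhoff's ergodic theorem and the standard homogenisation formula for $c_\eff$ yields $\Ecal_t(v_i^{\ssup t},v_j^{\ssup t})\to\lambda_i(V)\delta_{ij}$ almost surely; the min-max then closes this direction.

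For the lower bound, I would extend the discrete eigenfunctions $v_i^{\ssup t}$ to $\tilde v_i^{\ssup t}\in H_0^1(G)$ by the piecewise-linear simplicial interpolation used in the proof of Lemma~\ref{lem:eigenvalues-converge}. Uniform ellipticity $\lambda\le a(z,e)\le 1/\lambda$ gives a uniform $H^1$-bound, and Rellich--Kondrashov then produces $L^2(G)$-convergence of $\tilde v_i^{\ssup t}$ to some $\tilde v_i$ along a subsequence. The Mosco-type lower semicontinuity of $\Ecal$ under this discrete-to-continuous passage, which is the core statement of~\cite{BD03} that we adapt, gives $\liminf_t\Ecal_t(v_i^{\ssup t})\geq\Ecal(\tilde v_i)$; combined with $L^2$-orthonormality of the $\tilde v_i$, the min-max closes the lower bound. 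The eigenfunction convergence in~\eqref{eqn:spec-homog-ef-conv} then follows: once each $\tilde v_j$ is known to attain $\lambda_j(V)$, it is identified as $v_j$ up to sign by the simplicity of the continuous spectrum (for multiplicities, one obtains convergence up to rotation within the eigenspace, which still suffices for the stated norm convergence after enumerating a basis).

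The main obstacle is the quenched homogenisation input itself, i.e.\ the asymptotic identity $\sum_{z,e}a(z,e)|u(z+e)-u(z)|^2\sim c_\eff\int|\nabla f|^2$ for corrected discretisations: this does not follow from a law of large numbers because the random conductances do not commute with the discrete gradient, and the $L^2$-sublinearity of $\phi_e$ (the analytic content of the quenched invariance principle) is precisely what repairs this. This step is where the i.i.d.\ assumption is crucial, and is the reason we lean on \cite{BD03}. Including the lower-order potential $V_t$ is routine, since $V$ is uniformly continuous on $\overline G$ and does not interact with the microscopic corrector oscillations.
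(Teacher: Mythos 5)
Your proposal takes a genuinely different route from the paper's. The paper applies Lemma~\ref{lem:spectral-homogenisation} (the abstract div--curl-type weak convergence borrowed from \cite{BD03}) \emph{directly to the discrete eigenfunctions}: it extracts subsequential weak limits $q_j$ and limits $\nu_j$ of the eigenvalues, tests the discrete eigenvalue equation against smooth $\varphi$ to conclude that each $q_j$ satisfies $-\tfrac{c_\eff}{2}\Delta q_j + Vq_j=\nu_j q_j$ in the weak sense, and then appeals to \cite{BD03}, Corollary~2, for the fact that the $\nu_j$ enumerate the entire spectrum. You instead run a two-sided Rayleigh--Ritz argument: corrector-adjusted discretisations of the continuous eigenfunctions as a recovery sequence for $\limsup\lambda_j^{\ssup t}\le\lambda_j$, and piecewise-linear interpolation plus Rellich--Kondrashov plus a $\Gamma$-liminf bound for $\liminf\lambda_j^{\ssup t}\ge\lambda_j$. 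Both routes are built on the same homogenisation input (sublinear correctors, div--curl); the paper's has the advantage of bypassing explicit corrector constructions (it only tests against smooth $\varphi$, letting Lemma~\ref{lem:spectral-homogenisation} do the work) but hides the ``upper bound'' in the reference to Corollary~2 of \cite{BD03}. Yours is more constructive and self-contained in spirit but creates two extra chores: (a) the corrector $\phi_e$ is stationary, not compactly supported, so the trial functions must be cut off near $\partial G$, and the sublinearity of $\phi_e$ is then needed a second time to control the cut-off error; and (b) the phrase ``Mosco-type lower semicontinuity\ldots which is the core statement of~\cite{BD03}'' slightly mischaracterises what \cite{BD03} proves --- their Lemma (reproduced as Lemma~\ref{lem:spectral-homogenisation} here) gives weak flux convergence, $n^{(2-d)/2}\sum_z a(z,e)(u_n(z+e)-u_n(z))\hat\varphi_n(z)\to c_\eff\int\partial_e q\,\varphi$, and deducing the sharp $\Gamma$-liminf with constant $c_\eff$ (rather than the too-small harmonic mean $\langle a^{-1}\rangle^{-1}$ one would get from naive weak lower semicontinuity) requires a compensated-compactness argument pairing that flux convergence with the gradient --- in effect the div--curl lemma, which you would have to carry out. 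With those two points filled in, your scheme closes; including the potential $V_t$ is indeed routine by uniform continuity, and your remark about convergence up to rotation within degenerate eigenspaces is the correct resolution of multiplicities.
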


This statement has been proven in the special case $V\equiv 0$ in \cite{BD03} with ideas going back to Kesavan (\cite{K79}). In order to generalise their result to cover the case of non-zero potential $V$, we state a version of an intermediate result from \cite{BD03} based on which we subsequently prove Theorem \ref{thm:spectral-homogenisation}. In the following, we tacitly extend any function $f\colon G\to\R$ trivially (i.e., with the value zero) to a function $f\colon \R^d\to\R$ and define $\hat f_n(z)=f(z/(n+1))$ for $z\in\Z^d$ and $n\in\N$.

\begin{lemma}\label{lem:spectral-homogenisation}
For $n\in\N$, let $u_n\in\ell_2(\Z^d)$ with $\supp (u_n)\subset nG$ and $\Vert u_n\Vert_2=1$. Assume that $n^2\Vert(\Delta^a u_n)\1_{nG}\Vert_2$ is bounded.

Then, almost surely, any subsequence $(n_k)_{k\in\N}$ of strictly increasing integers contains a further subsequence $(\hat n_k)_{k\in\N}$ such that there is a function $q\in H_0^1(G)$ such that for all $\varphi\in\Ccal(G)\cap L^2(G)$ and $f\in\{1\}\cup\{a(\cdot,e)\colon e\in\Ncal\}$ and for all $e\in\Ncal$, as $n\to\infty$ along $\hat n_k$,
\begin{eqnarray}
n^{-d/2}\sum_{z\in\Z^d}u_n(z)\hat\varphi_n(z)f(z)&\to&\langle f\rangle\int_Gq(y)\varphi(y)\,\d y,
\label{eqn:hom-lem-1}\\
n^{(2-d)/2}\sum_{z\in\Z^d}a(z,e)(u_n(z+e)-u_n(z))\hat\varphi_n(z)&\to& c_\eff\int_G\partial_eq(y)\varphi(y)\,\d y.\label{eqn:hom-lem-2}
\end{eqnarray}
If the function $q$ is continuous, we have in addition
\begin{equation}\label{eqn:hom-lem-3}
\Vert u_n-n^{-d/2}\hat q_n\Vert_2\to 0\quad\text{as }t\to\infty.
\end{equation}
\end{lemma}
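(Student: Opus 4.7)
The proof adapts the intermediate homogenisation argument of~\cite{BD03} to the present formulation, the only novelty being that the test weight $f$ in \eqref{eqn:hom-lem-1} is now allowed to be a single conductance $a(\cdot,e)$ in addition to the constant~$1$. First I would derive a priori bounds. Since $\supp u_n\subset nG$, the Dirichlet form identity combined with Cauchy--Schwarz gives
\[
\sum_{z\in\Z^d,e\in\Ncal}a(z,e)\bigl(u_n(z+e)-u_n(z)\bigr)^2=-\langle u_n,\Delta^a u_n\rangle_{\ell^2}\le\|u_n\|_2\,\|(\Delta^a u_n)\1_{nG}\|_2\le Cn^{-2};
\]
uniform ellipticity $a(z,e)\ge\lambda$ then yields $\sum_{z,e}(u_n(z+e)-u_n(z))^2\le C_\lambda n^{-2}$. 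Rescaling $u_n$ into a piecewise-linear interpolant $\tilde u_n\in H_0^1(G)$ (via the simplicial finite-element construction from the proof of Proposition~\ref{lem:rescaled-fix-ldp}, with prefactor $(n+1)^{d/2}$ to preserve norms) translates this into a uniform bound on $\|\nabla\tilde u_n\|_{L^2(G)}$ while $\|\tilde u_n\|_{L^2(G)}\to 1$. The Rellich--Kondrashov theorem (Theorem~\ref{thm:rellich-kondrashov}) then allows me to extract from any subsequence a further one $(\hat n_k)$ along which $\tilde u_{\hat n_k}\to q$ weakly in $H_0^1(G)$ and strongly in $L^2(G)$, for some $q\in H_0^1(G)$.

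For \eqref{eqn:hom-lem-1} with $f\equiv 1$, the scaled sum $n^{-d/2}\sum_z u_n(z)\hat\varphi_n(z)$ is, up to a factor tending to $1$, a Riemann sum for $\int_G\tilde u_n(y)\varphi(y)\,\d y$, so the convergence follows from $\tilde u_{\hat n_k}\to q$ strongly in $L^2(G)$. For $f(z)=a(z,e)$ I would exploit the independence of the conductance field from $u_n\hat\varphi_n$ by partitioning $\Z^d$ into mesoscopic cubes of side length $m_n$ with $1\ll m_n\ll n$: on each cube Birkhoff's ergodic theorem gives $m_n^{-d}\sum_{\mathrm{cube}}a(\cdot,e)\to\langle a(0,e)\rangle$ almost surely, while the $H_0^1$-bound on $\tilde u_n$ ensures that $u_n\hat\varphi_n$ is well approximated by cube-wise constant functions; factoring out the constant values then yields the asserted limit $\langle a(0,e)\rangle\int_G q\varphi$.

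The main obstacle is the homogenised gradient convergence~\eqref{eqn:hom-lem-2}. Here I would invoke the stochastic-homogenisation correctors of the uniformly elliptic i.i.d.\ RCM (following Kozlov, K\"unnemann and the approach of Kesavan~\cite{K79} as in~\cite{BD03}): random fields $\chi^e\colon\Z^d\to\R$ with stationary, mean-zero increments such that the flux $j_e(z)=a(z,e)(1+\chi^e(z+e)-\chi^e(z))$ is divergence-free in the appropriate probabilistic sense and satisfies $\langle j_e\rangle=c_\eff$. A discrete Tartar--Murat div-curl (compensated compactness) argument then passes to the limit in the pairing $n^{(2-d)/2}\sum_z a(z,e)(u_n(z+e)-u_n(z))\hat\varphi_n(z)$: after the rescaling identity this pairing equals, up to lower-order terms, $\int_G a(\lfloor(n+1)y\rfloor,e)\,\partial_e\tilde u_n(y)\,\varphi(y)\,\d y$, and the weak convergence $\partial_e\tilde u_{\hat n_k}\rightharpoonup\partial_e q$ combines with the ergodic homogenisation of the flux to produce $c_\eff\int_G\partial_e q\cdot\varphi\,\d y$ in the limit. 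This step is where the effective conductivity $c_\eff$ enters and is essentially imported from~\cite{BD03}; rebuilding it would be the hard part of any self-contained write-up.

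Finally, \eqref{eqn:hom-lem-3} reduces to the preceding $L^2$-convergence. When $q$ is continuous, the piecewise-constant function $y\mapsto q(\lfloor(n+1)y\rfloor/(n+1))$ converges uniformly on compact subsets of $G$, hence in $L^2(G)$, to $q$; combined with $\tilde u_{\hat n_k}\to q$ strongly in $L^2(G)$ and with the $\ell^2$-to-$L^2$ isometry between $u\mapsto(n+1)^{d/2}u(\lfloor(n+1)\cdot\rfloor)$, this translates into $\|u_n-n^{-d/2}\hat q_n\|_{\ell^2}\to 0$ along $\hat n_k$, completing the proof.
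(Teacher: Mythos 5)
The paper does not actually prove this lemma: it explicitly introduces it as ``a version of an intermediate result from [BD03]'' (Boivin--Depauw) with the ideas going back to Kesavan, and then moves directly to the proof of Theorem~\ref{thm:spectral-homogenisation}. So there is no internal proof to compare against, and your attempt is a genuine reconstruction rather than a parallel to the paper's argument. The paper treats \eqref{eqn:hom-lem-1}--\eqref{eqn:hom-lem-3} as black boxes from the homogenisation literature; you are trying to open them.

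Your a priori energy bound (Dirichlet-form identity plus Cauchy--Schwarz plus uniform ellipticity), the finite-element interpolation, and the Rellich--Kondrashov extraction are all the right starting moves, and match the style of the proof of Proposition~\ref{lem:rescaled-fix-ldp}. The treatment of \eqref{eqn:hom-lem-1} for $f\equiv 1$ as a Riemann sum is also fine, given the vanishing discrete gradients.

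There is, however, a genuine flaw in your argument for \eqref{eqn:hom-lem-1} with $f=a(\cdot,e)$. You propose to ``exploit the independence of the conductance field from $u_n\hat\varphi_n$.'' There is no such independence: in the intended application (Theorem~\ref{thm:spectral-homogenisation}), $u_n$ is the Dirichlet eigenfunction of the random operator $-\alpha_t^2\Delta^a + V_t$, which manifestly depends on $a$; and even at the level of the lemma, $u_n$ is an arbitrary, $\omega$-dependent sequence (the ``almost surely'' quantifier confirms this). Independence cannot be used. The conclusion is nevertheless correct, but the mechanism is different: by the pointwise ergodic theorem, the rescaled field $y\mapsto a(\lfloor(n+1)y\rfloor,e)$ converges $\Pr$-a.s.\ to the constant $\langle a(0,e)\rangle$ weakly-* in $L^\infty(G)$, and this weak-* convergence is paired against the \emph{strong} $L^1(G)$-convergence of $\hat u_n\varphi_n^\ast$ to $q\varphi$ (which you already have from Rellich--Kondrashov). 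It is the strong/weak-* duality, not any decoupling between $a$ and $u_n$, that closes this step. Your mesoscopic-cube averaging is one way to carry out the ergodic-theorem input, but the decoupling justification should be rephrased. For \eqref{eqn:hom-lem-2}, you correctly identify the corrector/div-curl structure and openly acknowledge that this part is essentially imported from [BD03]; that is consistent with the paper's own treatment, which also delegates it to that reference.
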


This result already encapsulates the input from homogenisation theory and ergodic theory. We turn to the proof of Theorem \ref{thm:spectral-homogenisation} following the same route as the the proof of the analogous result for $V\equiv0$ in \cite{BD03}.

\begin{proof}[Proof of Theorem \ref{thm:spectral-homogenisation}]
Write $\lambda_j^{\ssup t}$ and  $\lambda_j$ instead of $\lambda_j^{\ssup t}(V)$ and $\lambda_j(V)$. As we consider subsets of the lattice, we may, without loss of generality, assume that $\alpha_t$ takes integer values only. With  $\mu_1^{\ssup t},\mu_1^{\ssup t},\ldots$ the Dirichlet eigenvalues of the homogeneous discrete operator $-\frac 12\Delta$ on $\alpha_tG\cap\Z^d$,  the eigenfunctions $v_j^{\ssup t}$, $j\in\N$ clearly satisfy
\begin{equation}
\alpha_t^2\Vert(\Delta^a v_j^{\ssup t})\1_{\alpha_t G}\Vert_2\leq\lambda_j^{\ssup t}\leq  \frac{\alpha_t^2}\lambda\mu_j^{\ssup t},
\end{equation}
where $\lambda\in(0,1)$ is the ellipticity parameter for the conductances. As the eigenvalues $\mu_j^{\ssup t}$ are known to be of order $\alpha_t^{-2}$, the $v_j^{\ssup t}$ satisfy the prerequisites of Lemma \ref{lem:spectral-homogenisation} and we may conclude that, for $j\in\N$, there are $\nu_j\in\R$ and $q_j\in H_0^1(G)$ such that for all $\varphi\in\Ccal(G)\cap L^2(G)$, as $ t\to\infty$,
\begin{eqnarray}
\lambda_j^{\ssup t}&\rightarrow& \nu_j,\label{eqn:spec-homog-ev-converge}\\
\alpha_t^{-d/2}\sum_{z\in\Z^d}v_j^{\ssup t}(z)\hat\varphi_{\alpha_t}(z)&\rightarrow&\int_Gq_j(y)\varphi(y)\,\d y,\label{eqn:spec-homog-weak-converge}\\
\alpha_t^{(2-d)/2}\sum_{z\in\Z^d}a(z,e)(v_j^{\ssup t}(z+e)-v_j^{\ssup t}(z))\hat\varphi_{\alpha_t}(z)&\rightarrow& c_\eff\int_G\partial_eq_j(y)\varphi(y)\,\d y.\label{eqn:spec-homog-weak-grad-converge}
\end{eqnarray}
Let us show that the $\nu_j$ are eigenvalues of $-\frac{c_\eff}{2}\Delta+V$ with corresponding eigenfunction $q_j$. Indeed, for all $\varphi\in\Ccal_0^\infty(G)$, by \eqref{eqn:spec-homog-weak-converge},
\begin{equation}\label{eqn:spec-homog-ev-1}
\alpha_t^{-d/2}\sum_{z\in\Z^d}\big((-\alpha_t^2\Delta^a+V_t)v_j^{\ssup t}(z)\hat\varphi_{\alpha_t}(z)\big)=\lambda_j^{\ssup t} \alpha_t^{-d/2}\sum_{z\in\Z^d}v_j^{\ssup t}(z)\hat\varphi_{\alpha_t}(z)\underset{t\to\infty}\longrightarrow\nu_j\int_Gq_j(y)\varphi(y)\,\d y.
\end{equation}
On the other hand, by \eqref{eqn:spec-homog-weak-converge}, \eqref{eqn:spec-homog-weak-grad-converge} and integration by parts (using symmetry of the conductances),
\begin{align}\label{eqn:spec-homog-ev-2}
&\alpha_t^{-d/2}\sum_{z\in\Z^d}\big((-\alpha_t^2\Delta^a+V_t)v_j^{\ssup t}(z)\hat\varphi_{\alpha_t}(z)\big)\notag\\
=&-\smfrac 12\alpha_t^{(2-d)/2}\sum_{z\in\Z^d}\sum_{e\in\Ncal}a(z,e)\Big[(v_j^{\ssup t}(z+e)-v_j^{\ssup t}(z))\alpha_t\big(\hat\varphi_{\alpha_t}(z+e)-\hat\varphi_{\alpha_t}(z)\big)\Big]\notag\\
&+\alpha_t^{-d/2}\sum_{z\in\Z^d}\big(V_t(z)v_j^{\ssup t}(z)\hat\varphi_{\alpha_t}(z)\big)\notag\\
\underset{t\to\infty}\longrightarrow &-\frac{c_\eff}{2}\sum_{e\in\Ncal}\int_G\partial_e q_j(y)\partial_e\varphi(y)\,\d y+\int_Gq_j(y)V(y)\varphi(y)\,\d y.
\end{align}
In the last step, we also used that $\alpha_t\big(\hat\varphi_{\alpha_t}(z+e)-\hat\varphi_{\alpha_t}(z)\big)-\widehat{\partial_e\varphi}_{\alpha_t}(z)$ as well as $V_t(z)-\hat V_{\alpha_t}(z)$ vanish at least in a weak $L^2$-sense. The limits in \eqref{eqn:spec-homog-ev-1} and \eqref{eqn:spec-homog-ev-2} show that the left-hand sides of these two are equal, which means the $\nu_j$ are eigenvalues of $-\frac{c_\eff}{2}\Delta+V$ with eigenfunction $q_j$. It now remains to show that the $\nu_j$ are in fact all eigenvalues of that operator and therefore constitute the entire $H_0^1$-spectrum. This is done in complete analogy with \cite{BD03}, Corollary 2, hence we omit it here for conciseness. As the eigenvalues $\lambda_j^{\ssup t}$ are ordered, so are the $\nu_j$. This means we have, for all $j\in\N$, $\lambda_j^{\ssup t}\to\nu_j=\lambda_j$ as $t\to\infty$ and $q_j=v_j$. Finally, as the $v_j$ are continuous, \eqref{eqn:spec-homog-ef-conv} follows from \eqref{eqn:hom-lem-3} in Lemma~\ref{lem:spectral-homogenisation}.
\end{proof}

\subsection{Proof of Theorem \ref{thm:ldp-quenched}}\label{sec:localtimes-quenched-ldp}

The proof is conducted in analogy with the proof of Proposition~\ref{lem:rescaled-fix-ldp} in Section \ref{sec:ldp-growing-prelim}. Like in that proof, it will be sufficient to show that
\begin{equation}\label{eqn:homog-cumulant-generating}
\lim_{t\to\infty}\frac{\alpha_t^2}{t}\log\E_z^{a}\Big[\exp\Big\{-\frac{t}{\alpha_t^2}\int_G V(y)L_t(y)\,\d y\Big\}\,\Big\vert\, X_{[0,t]}\subset\alpha_tG\Big]=-\lambda_1(V)+\lambda_1(0),
\end{equation}
for all $V\in \Ccal_{\rm b}(G)$.  For such a $V$, define the operator $\Pcal_t^{a,V}$ on $\ell^2(\alpha_tG\cap\Z^d)$ by
\begin{equation*}
\Pcal_t^{a,V}f(z)=\E_z^{a}\Big[\exp\Big\{-\frac{t}{\alpha_t^2}\int_G V(y)L_t(y)\,\d y\Big\}\1\{X_{[0,t]}\subset\alpha_tG\}f(X_t)\Big].
\end{equation*}
Then, \eqref{eqn:homog-cumulant-generating} is implied by showing
\begin{equation*}
\lim_{t\to\infty}\frac{\alpha_t^2}{t}\log\Pcal_t^{a,V}\1(0)=-\lambda_1(V)
\end{equation*}
instead. Recalling the definitions \eqref{eqn:vtdef} of $V_t$ and \eqref{eqn:rescaled_local_times} of $L_t$, we see that
\begin{align*}
\Pcal_t^{a,V}f(z)=\E_z^{a}\Big[\exp\Big\{-\frac{1}{\alpha_t^2}\int_0^t V_t(X_s)\,\d s\Big\}\1\{X_{[0,t]}\subset\alpha_tG\}f(X_t)\Big].
\end{align*}
Consequently, $\Pcal_t^{a,V}$ admits the semigroup representation
$$
\Pcal_t^{a, V}=\exp\{t(\Delta^{a}-\alpha_t^{-2}V_t)\}=\exp\Big\{-t\alpha_t^{-2}\big[-\alpha_t^2\Delta^{a}+V_t\big]\Big\},
$$
where the operator in the exponent is considered in $\ell^2(\alpha_tG\cap\Z^d)$ with zero boundary condition. Note that $\Pcal_t^{a, V}$ has the same principal eigenfunction as the operator $-\alpha_t^2\Delta^{\varphi_t}+V_t$ has, and the corresponding principal eigenvalue is given by $\exp\big\{-\frac{t}{\alpha_t^2}\lambda_1^{\ssup t}(V)\big\}$. An eigenvalue expansion yields, for each $t\geq 0$,
\begin{equation*}
\exp\Big\{-\frac{t}{\alpha_t^2}\lambda_1^{\ssup t}(V)\Big\}\big(v_t(0)\big)^2\leq \Pcal_t^{a,V}\1(0)\leq\vert\alpha_tG\vert^2\exp\Big\{-\frac{t}{\alpha_t^2}\lambda_1^{\ssup t}(V)\Big\}.
\end{equation*}
By Theorem \ref{thm:spectral-homogenisation}, $\lambda_1^{\ssup t}(V)\to\lambda_1(V)$ as $t\to\infty$, so it remains to show that $v_t(0)$ decays only polynomially as $t\to\infty$. Since $v_t$ is an eigenfunction of $-\alpha_t^2\Delta^{a}+V_t$ corresponding to the eigenvalue $\lambda_1^{\ssup t}(V)$, we have
\begin{align*}
v_t(0)&=\e^{-\lambda_1^{\ssup t}(V)}\big(\exp\{\alpha_t^2\Delta^a-V_t\}v_t\big)(0)\\
&=\e^{-\lambda_1^{\ssup t}(V)}\E^{\alpha_t^2a}_0\Big[\exp\Big\{-\int_0^1V_t(X_s)\,\d s\Big\}v_t(X_1)\Big]\\
&\geq\e^{-\lambda_1^{\ssup t}(V)-V_\ast}\E^{\alpha_t^2a}_0\big[v_t(X_1)\big]
\end{align*}
where $V_\ast$ is some upper bound for $V$. Abbreviating $v_t^\ast=\max_{x\in\alpha_tG\cap\Z^d} v_t(x)$, we estimate
\begin{equation*}
v_t(0)\geq v_t^\ast\e^{-\lambda_t(V)-V_\ast}\min_{x\in B_t}\P^{\alpha_t^2a}_0\big(X_1=x\big).
\end{equation*}
As $v_t$ is normed, the decay of its maximal value is only polynomial as $t\to\infty$, so we only need to consider the exponential decay of the probability term above. Here we employ a heat kernel estimate from \cite[Theorem 1.2]{BD10}. It says that there are positive constants $c_1,c_2$ such that, for $t$ sufficiently large (depending on the realisation of the conductances),
\begin{equation*}
\P^{\alpha_t^2a}_0\big(X_1=x\big)=\P^{a}_0\big(X_{\alpha_t^2}=x\big)\geq c_1\alpha_t^{-d}\e^{-c_2\vert x\vert^2/\alpha_t^2}
\end{equation*}
for all $x\in\Z^d$ with $\vert x\vert\leq\alpha_t^3$. As $\vert x\vert^2/\alpha_t^2$ is bounded, we have shown that $v_t(0)$ decays only polynomially as $t\to\infty$, and the proof of Theorem \ref{thm:ldp-quenched} is finished.

\end{document}